\numberwithin{equation}{section}
\theoremstyle{plain}
\newtheorem{theorem}{Theorem}[section]
\newtheorem{proposition}[theorem]{Proposition}
\theoremstyle{definition}
\newtheorem{definition}[theorem]{Definition}
\newtheorem{remark}[theorem]{Remark}
\newtheorem*{theorem*}{Theorem}
\newcommand{\calk}{\mathcal K}
\newcommand{\frkm}{\mathfrak m}
\DeclareMathOperator{\diag}{diag}
\begin{document}
\title{Dissipativeness of the hyperbolic quadrature method of moments for kinetic equations}

\author{Ruixi Zhang}
\address{Department of Mathematical Sciences, Tsinghua University\\
    Beijing, 100084, China}
\email{1553548358@qq.com}

\author{Yihong Chen}
\address{Department of Mathematical Sciences, Tsinghua University\\
    Beijing, 100084, China}
\email{chenyiho20@mails.tsinghua.edu.cn}

\author{Qian Huang*}
\address{Institute of Applied Analysis and Numerical Simulation, University of Stuttgart\\
    Stuttgart, 70569, Germany}
\thanks{* Corresponding author}
\email{qian.huang@mathematik.uni-stuttgart.de; hqqh91@qq.com}

\author{Wen-An Yong}
\address{Department of Mathematical Sciences, Tsinghua University\\
    Beijing, 100084, China \\
    Beijing Institute of Mathematical Sciences and Applications\\
    Beijing 101408, China}
\email{wayong@tsinghua.edu.cn}

\keywords{BGK model; moment closure; hyperbolicity; dissipativeness; realizability}

\vskip .2truecm
\begin{abstract}
  This paper presents a dissipativeness analysis of a quadrature method of moments (called HyQMOM) for the one-dimensional BGK equation. The method has exhibited its good performance in numerous applications. However, its mathematical foundation has not been clarified. Here we present an analytical proof of the strict hyperbolicity of the HyQMOM-induced moment closure systems by introducing a polynomial-based closure technique. As a byproduct, a class of numerical schemes for the HyQMOM system is shown to be realizability preserving under CFL-type conditions. We also show that the system preserves the dissipative properties of the kinetic equation by verifying a certain structural stability condition. The proof uses a newly introduced affine invariance and the homogeneity of the HyQMOM and heavily relies on the theory of orthogonal polynomials associated with realizable moments, in particular, the moments of the standard normal distribution.
\end{abstract}

\maketitle

\normalem

\section{Introduction}

We are interested in a class of moment closure methods dealing with the hypothetical 1D BGK equation for the velocity distribution $f=f(t,x,\xi)$ with $t >0$, $x \in \mathbb R$ and $\xi \in \mathbb R$:
\begin{equation} \label{eq:1D-BGK}
  \left \{
  \begin{aligned}
    \partial_t f+\xi \partial_x f &= \frac{1}{\tau}(f^{eq}-f), \\
    f^{eq} = f^{eq}(\xi; \rho,U,\theta) &=
    \rho \phi_{\sqrt{\theta}} (\xi-U).
  \end{aligned}
  \right.
\end{equation}
Here $\tau$ is a relaxation time, the density $\rho$, mean velocity $U$ and temperature $\theta$ are defined as
\[
  \rho = \int_{\mathbb R} fd\xi,\quad
  \rho U = \int_{\mathbb R} \xi fd\xi,\quad
  \rho\theta + \rho U^2 = \int_{\mathbb R} \xi^2fd\xi,
\]
and
\begin{equation}
  \phi_{\sigma}(\xi) = \frac{1}{\sqrt{2\pi}\sigma} \exp \left( -\frac{\xi^2}{2\sigma^2} \right).
\end{equation}
In this way, the local Maxwellian $f^{eq}$ is completely specified.

The kinetic equation (\ref{eq:1D-BGK}) is a simplification of the Boltzmann equation governing spatial-velocity distributions of molecules with binary collisions \cite{Cercignani1988}. The latter is well accepted as the underlying theory of rarefied gas dynamics and has wide applications. Examples include spacecraft reentry and lunar-lander-induced dusty plumes where flows transition from continuum to rarefied and free molecular regimes \cite{LZH2019,Morris2011}. In the dilute Martian atmosphere, the micro-sized dust suspensions have a Knudsen number of $\mathcal O(1)$, indicating as well a rarefied flow \cite{ClementsMars2013}. Generally, the kinetic model is also a powerful tool to study systems of interacting agents, like particulate flows \cite{HuangEF2} and active matter dynamics \cite{March2013}.

But it is a challenging task to solve the Boltzmann equation due to high dimensions and complicated collisions. In (\ref{eq:1D-BGK}), the collision term has been replaced by a simplified relaxation process \cite{bgk1954}. This is a widely used approximation for it preserves key properties of the original Boltzmann equation. We limit our analysis to the BGK equation. To tackle the high dimension, the method of moments stands out as an efficient strategy to reduce velocity dependence and generate `hydrodynamic' equations with macroscopic variables. In this process, a moment truncation to a finite order requires a closure of higher-order moments.

In pursuit of well-behaved moment closure methods, the 1D system (\ref{eq:1D-BGK}) is often considered as a starting point and test ground, where the $k$th moment is defined as
\begin{equation}
  M_k = M_k(t,x) = \int_{\mathbb R} \xi^k f d\xi
\end{equation}
for $k\in\mathbb{N}$.
Then the governing equation for $M_k$ can be derived from (\ref{eq:1D-BGK}) as
\begin{equation} \label{eq:unc}
  \partial_t M_k + \partial_x M_{k+1}= \frac{1}{\tau} \left(\rho \Delta_k(U,\theta)-M_k \right),
\end{equation}
with
\begin{equation} \label{eq:standard_mom}
  \Delta_k(u,\sigma^2) := \int_{\mathbb R} \xi^k \phi_\sigma(\xi-u)d\xi.
\end{equation}
Clearly, a truncation with $\mathbf M = (M_0,\dots,M_N)$ needs a closure $M_{N+1} = M_{N+1}(\mathbf M)$ for the hierarchy (\ref{eq:unc}) to be closed.

In this paper, we are concerned with quadrature-based methods of moments to close (\ref{eq:unc}). Unlike the well-known Grad's method \cite{Grad1949}, the quadrature method (abbreviated as QMOM) relies on nonlinear reconstructions of distributions that can be far from equilibrium and, by its very nature, preserves positivity. Precisely, $2n$ moments ($N=2n-1$ in (\ref{eq:unc})) are invoked to construct a convex combination of $n$ Dirac $\delta$-functions with centers and weights being unknowns \cite{Mc1997}. Theoretically, this is permissible for all realizable moments. The centers are found as roots of an orthogonal polynomial induced by the moments. In this way, the QMOM is numerically efficient and becomes popular in simulating particulate flows \cite{HuangEF2,MarFox2013}.
However, the QMOM-induced system is proved to be \textit{non}-hyperbolic and can give unphysical shocks in simulation \cite{Chalons2012,Fox2008,Huang2020}.

To remedy this drawback, many efforts have been paid to develop hyperbolic moment closure systems. One strategy called extended-QMOM (EQMOM) is to introduce a new (unknown) parameter and approximate the distribution by a convex combination of $n$ parameter-dependent approximations of the $\delta$-function using $2n+1$ moments ($N=2n$ in (\ref{eq:unc})) \cite{Chalons2010}. If the approximation is Gaussian, hyperbolicity is proved in \cite{Chalons2017,Huang2020}. For more general approximations, we refer to \cite{ZHY2023}.
But the domain of the EQMOM-induced systems (namely, the moment set admitting such a reconstruction) does not contain all realizable moments \cite{Chalons2017}.

Another strategy called hyperbolic-QMOM (HyQMOM) can be free of these drawbacks. The point is to take $N=2n$ in (\ref{eq:unc}) and find a convex combination of more than $n$ $\delta$-functions (with unknown centers and weights) to recover $2n+1$ moments $\mathbf M=(M_0,\dots,M_{2n})$ with more freedom. For this underdetermined problem, it is possible to pick a proper reconstruction that closes $M_{2n+1}(\mathbf M)$ and yields a hyperbolic system. This can be done in many ways \cite{Boh2020,Fox2018}.
Recently, general HyQMOM closures up to any order are proposed in \cite{FoxLau2022} and \cite{van2021} with the aid of the orthogonal polynomial theory associated with realizable moments \cite{Gau2004}. Applications of the HyQMOM to rarefied flows, multiphase flows and plasma simulations exhibit advantages over other approaches in viability, accuracy and computational efficiency \cite{FoxLau2022,LiMar2022,Mueller2023}.
Based on these, it was conjectured in \cite{FoxLau2022,van2021} that the HyQMOM closure systems are hyperbolic.
However, an analytic proof of the hyperbolicity is still lacking, although it was shown by computing the corresponding characteristic polynomials only for the closure in \cite{FoxLau2022} with $n\le 9$ using MATLAB symbolic.

One of our objectives here is to present a purely analytic proof of the hyperbolicity of the HyQMOM for both closures in \cite{FoxLau2022,van2021} with any integer $n$. The proof uses a factorization of the characteristic polynomial for the resultant first-order PDE and a polynomial-based closure technique.
As a byproduct, a class of numerical schemes for the HyQMOM system is shown to be realizability preserving under CFL-type conditions.

Besides hyperbolicity, the dissipativeness of moment systems should also be addressed to ensure that the dissipative property of the original kinetic equation are correctly inherited. For this purpose, we will also show that the HyQMOM-induced moment system satisfies the structural stability condition proposed in \cite{Yong1999}, which is believed to be a proper counterpart of the $H$-theorem for the kinetic equation. Indeed, the stability condition has been shown to be respected by many classical physical models \cite{Yong2008} and moment closure systems \cite{Di2017,Huang2022,Zhao2017}. A violation of the condition may lead to exponentially-exploding asymptotic solutions \cite{LiuJW2016}. For the quadrature methods, the condition has been verified for the EQMOM with Gaussian kernels \cite{Huang2020}, but there do exist kernels with which the EQMOM system contradicts the stability condition \cite{ZHY2023}.

The proof for the dissipativeness involves seeking positive solutions to an overdetermined system of algebraic equations. It uses a newly introduced affine invariance and homogeneity of the HyQMOM and heavily relies on the theory of orthogonal polynomials associated with realizable moments, in particular, the moments of the standard normal distribution.

The remainder of the paper is organized as follows. Section \ref{sec:qbmm} introduces the theory of orthogonal polynomials associated with realizable moments, several quadrature-based methods of moment, including QMOM, EQMOM and HyQMOM, and the structural stability condition. Section \ref{sec:result} contains our main results. In section \ref{sec:hyp}, we prove the strict hyperbolicity of the HyQMOM-induced moment system. Section \ref{sec:stab} is devoted to verifying the structural stability condition. Finally, we conclude our paper in section \ref{sec:concld}.

\section{Quadrature-based method of moments}
\label{sec:qbmm}

Given an integer $N\geq3$, a direct truncation of (\ref{eq:unc}) for $\mathbf M = (M_0,\dots,M_N)$ yields a system
\begin{equation} \label{eq:unc1}
  \partial_t \mathbf M + \partial_x \mathcal F(\mathbf M) = S(\mathbf M),
\end{equation}
with
\[
\begin{aligned}
  \mathcal F(\mathbf M) &= (M_1,\dots,M_N,M_{N+1}(\mathbf M))\in\mathbb R^{N+1}, \\
  S(\mathbf M) &= \frac{1}{\tau} (0,0,0,\rho \Delta_3(U,\theta)-M_3,\dots, \rho \Delta_N(U,\theta)-M_N)\in\mathbb R^{N+1}.
\end{aligned}
\]
Once $M_{N+1}(\mathbf M)$ is determined (either explicitly or implicitly), the first-order system of PDEs (\ref{eq:unc1}) becomes a moment closure system in the conservative form.

\subsection{Orthogonal polynomials} \label{subsec:orthpoly}
A vector (moment) $\mathbf M = (M_0,\dots,M_{2n})$ of odd dimension is called \textit{strictly realizable} if the corresponding Hankel matrix
\begin{equation}
  H_n = H_n(\mathbf M) =
  \begin{bmatrix}
    M_0 & M_1 & \cdots & M_n \\
    M_1 & M_2 & \cdots & M_{n+1} \\
    \vdots & \vdots & & \vdots \\
    M_n & M_{n+1} & \cdots & M_{2n}
  \end{bmatrix}
  \in \mathbb R^{(n+1)\times(n+1)}
\end{equation}
is positive definite \cite{gtm277}. This means that $\mathbf M$ is generated by a physically relevant velocity distribution.
Denote by
\[
  \Omega_{2n} = \{\mathbf M\in\mathbb R^{2n+1} | H_n(\mathbf M) \text{ is positive definite} \}
\]
the collection of strictly realizable moments. Clearly, $\Omega_{2n}$ is a positive cone.

For $\mathbf M \in\Omega_{2n}$, a linear functional $\langle \cdot \rangle_{\mathbf M}$ can be defined on $\mathbb R[X]_{2n}$ (the space of real
polynomials of degree $\le 2n$) as
\begin{equation} \label{eq:functl}
  \langle X^k \rangle_{\mathbf M} \mapsto M_k
\end{equation}
for $k=0,1,\dots,2n$.
The notation $\langle \cdot \rangle$ is used for simplicity if the dependence on $\mathbf M$ is clear.
The functional induces an inner product on $\mathbb R[X]_{n}$ as
\begin{equation}
  (p,q) \mapsto \langle pq \rangle
\end{equation}
for $p,q\in \mathbb R[X]_n$.
To see this, just notice that if $p=\sum_{k=0}^n p_k X^k$, then we have $\langle p^2 \rangle = \bm p^T H_n \bm p \ge 0$ with $\bm p=(p_0,\dots,p_n)^T\in\mathbb R^{n+1}$.

Given the inner product, a family of monic orthogonal polynomials $Q_k = Q_k(X)$ ($k=0,1,\dots,n$) can be constructed and satisfy
\[
  \deg Q_k = k \quad \text{and} \quad \langle Q_i Q_k \rangle = \langle Q_k^2 \rangle \delta_{ik}
\]
for $0\le i,k \le n$. Here $\delta_{ik}$ denotes the Kronecker delta. The orthogonal polynomials are generated recursively as $Q_{-1}=0$, $Q_0=1$ and
\begin{equation} \label{eq:qrec}
  Q_{k+1} = (X - a_k)Q_k - b_kQ_{k-1}
\end{equation}
for $k=0,1,\dots,n-1$. The coefficients $a_k$ and $b_k$ are derived as \cite{Gau2004}
\begin{equation} \label{eq:ab}
  a_k = \frac{\langle XQ_k^2 \rangle}{\langle Q_k^2 \rangle}; \quad
  b_0 = M_0, \quad
  b_k = \frac{\langle Q_k^2 \rangle}{\langle Q_{k-1}^2 \rangle}>0\ (k\geq1).
\end{equation}
From these expressions it is not difficult to see that $a_k$ depends on $M_0,\dots,M_{2k+1}$ (with linear dependence on $M_{2k+1}$) and $b_k$ depends only on $M_0,\dots,M_{2k}$ (with linear dependence on $M_{2k}$). Notice that $b_{n}$ can be well defined although it does not appear in (\ref{eq:qrec}).

About the orthogonal polynomials, we have the following important facts.

\begin{proposition}[Theorem 1.20 of \cite{Gau2004}] \label{prop:inter}
  Each $Q_k$ has $k$ distinct real roots which are separated by those of $Q_{k-1}$.
\end{proposition}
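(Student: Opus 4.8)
The statement to prove is Proposition \ref{prop:inter}: each monic orthogonal polynomial $Q_k$ (with respect to the inner product $(p,q)\mapsto\langle pq\rangle_{\mathbf M}$ induced by a strictly realizable moment vector) has $k$ distinct real roots, and these roots are strictly interlaced by the roots of $Q_{k-1}$. The plan is to use the standard two-ingredient argument from the classical theory of orthogonal polynomials, adapted to the present finite setting where the functional is only defined on $\mathbb R[X]_{2n}$.

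First I would establish that $Q_k$ has $k$ distinct real roots for $0\le k\le n$. The key observation is that $\langle p^2\rangle_{\mathbf M}=\bm p^{T}H_k\bm p>0$ for every nonzero $p\in\mathbb R[X]_k$, since $H_k$ is a principal submatrix of the positive definite $H_n$; in particular the inner product is genuinely positive definite on $\mathbb R[X]_k$. Now suppose, for contradiction, that $Q_k$ has only $m<k$ distinct real roots $x_1,\dots,x_m$ at which it changes sign (a root of even multiplicity does not count). Set $r(X)=\prod_{j=1}^{m}(X-x_j)$, so $\deg r=m\le k-1$. Then $r\,Q_k$ does not change sign on $\mathbb R$, so $\langle rQ_k\rangle_{\mathbf M}\neq0$: one justifies this either by noting that $rQ_k$ is, up to a perfect square factor, a sum of squares times a nonnegative polynomial and invoking a positivity/representation argument, or — cleanly in this setting — by writing $rQ_k=c\,\prod_j (X-x_j)^{2}\cdot(\text{even-power remaining factor})$ and using that $\langle \cdot\rangle_{\mathbf M}$ is represented by a positive definite Hankel matrix, hence is a moment functional of a genuine positive measure supported at $k+1$ points (the Gauss quadrature nodes), so it is strictly positive on any nonzero nonnegative polynomial of degree $\le 2n$. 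On the other hand, since $\deg r\le k-1$, orthogonality gives $\langle rQ_k\rangle_{\mathbf M}=0$. This contradiction forces $m=k$, i.e. $Q_k$ has $k$ distinct real roots.

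Next I would prove the interlacing. Let $x_1<x_2<\cdots<x_k$ be the roots of $Q_k$ and $y_1<\cdots<y_{k-1}$ those of $Q_{k-1}$. The cleanest route is the Christoffel–Darboux/Wronskian identity: from the recurrence \eqref{eq:qrec} one derives that $Q_{k+1}Q_{k-1}-Q_k^2=-b_k Q_{k-1}^2+(\text{telescoping terms})$, more precisely the standard consequence $Q_{k}'(x)Q_{k-1}(x)-Q_{k-1}'(x)Q_k(x)>0$ at any common point, equivalently $Q_{k-1}(x_j)Q_{k+1}(x_j)<0$ has the wrong shape — so instead I would use the Wronskian $W_k:=Q_k'Q_{k-1}-Q_{k-1}'Q_k$ and show by induction using \eqref{eq:qrec} that $W_k>0$ everywhere (it satisfies $W_{k+1}=Q_k^2+b_k W_k$ with $b_k>0$ by \eqref{eq:ab}, and $W_1=Q_0^2=1>0$; note $Q_k$ and $Q_{k-1}$ have no common root, else $W_k$ would vanish there). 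Evaluating $W_k>0$ at a root $x_j$ of $Q_k$ gives $Q_k'(x_j)Q_{k-1}(x_j)>0$; since the roots $x_j$ are simple, $Q_k'$ alternates in sign along them, hence so does $Q_{k-1}(x_j)$, which means $Q_{k-1}$ has a root strictly between consecutive $x_j$'s, producing $k-1$ such roots — exactly all of them, and hence strict interlacing. One should also check the boundary behavior ($Q_{k-1}$ has no root outside $(x_1,x_k)$) via the leading-coefficient sign, which is immediate since both polynomials are monic.

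The main obstacle is the first step's positivity claim: that $\langle p\rangle_{\mathbf M}>0$ for a nonzero polynomial $p$ of degree $\le 2n$ that is nonnegative on $\mathbb R$. Unlike the classical setting, here $\langle\cdot\rangle_{\mathbf M}$ is a priori only a linear functional on $\mathbb R[X]_{2n}$ defined by $\langle X^k\rangle=M_k$, not an integral against a measure. The resolution is the Hamburger-type fact that a positive definite Hankel matrix $H_n$ is exactly the moment matrix of some (finitely supported, $(n+1)$-atom) positive measure $\mu$ — equivalently, one can run the Gauss quadrature construction first: the $k=n$ orthogonal polynomial $Q_n$ has $n+1$... (careful: $Q_n$ has degree $n$, giving $n$ nodes) — so one uses $Q_{n}$ together with the recurrence to build an $(n{+}1)$-point quadrature exact on $\mathbb R[X]_{2n}$ with positive weights, and then $\langle p\rangle_{\mathbf M}=\sum w_i p(\lambda_i)>0$ for nonzero nonnegative $p$. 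Making this self-contained without circularity (the Gauss nodes are themselves roots of an orthogonal polynomial) is the delicate point; the honest fix is to prove the real-rootedness in step one directly from positive definiteness of Hankel minors via the sign-change/contradiction argument applied to $r(X)^2$ padding, which keeps everything within the quadratic form $\bm q^T H_k\bm q$ and avoids invoking a representing measure at all.
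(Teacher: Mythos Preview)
The paper does not prove this proposition at all: it is quoted verbatim as Theorem~1.20 of \cite{Gau2004} and used as a black box. So there is no ``paper's own proof'' to compare against; your proposal is simply supplying the classical argument that the paper chose to cite.

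Your two-step strategy (sign-change contradiction for real-rootedness, Wronskian recursion $W_{k+1}=Q_k^2+b_kW_k$ for interlacing) is the standard one and is correct. The interlacing step is clean. The only place that needs tightening is the positivity claim $\langle rQ_k\rangle_{\mathbf M}\neq 0$: you correctly flag it as the obstacle but never land on the clean fix. The right observation is that a nonzero univariate polynomial $p$ of even degree $\le 2n$ that is nonnegative on $\mathbb R$ can be written as $p=A^2+B^2$ with $A,B\in\mathbb R[X]_n$ (factor $p$ over $\mathbb C$ into real quadratic factors $(X-\alpha)^2+\beta^2$ and even-power real linear factors, then use the two-square identity); hence $\langle p\rangle_{\mathbf M}=\langle A^2\rangle_{\mathbf M}+\langle B^2\rangle_{\mathbf M}>0$ directly from positive definiteness of $H_n$. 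This stays entirely inside the Hankel quadratic form and avoids the circularity you were worried about with quadrature nodes. Your ``$r(X)^2$ padding'' remark hints at this but doesn't say it; replacing that paragraph with the sum-of-two-squares sentence would make the argument self-contained and remove the meandering about representing measures.
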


\begin{proposition}[Theorem 5.10 of \cite{gtm277}] \label{prop:bij1}
  The relation (\ref{eq:ab}), together with (\ref{eq:qrec}), defines a bijection between $\Omega_{2n}$ and
  \[
    \{(a_0,\dots,a_{n-1},b_0,\dots,b_n)\in \mathbb R^{2n+1} | b_i>0 \text{ for } i=0,\dots,n\}.
  \]
\end{proposition}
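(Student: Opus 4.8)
The plan is to exhibit explicit inverse maps in both directions and check they are mutually inverse, then separately verify the positivity characterization. First I would show that from $\mathbf M\in\Omega_{2n}$ the recursions (\ref{eq:qrec})--(\ref{eq:ab}) produce a well-defined tuple $(a_0,\dots,a_{n-1},b_0,\dots,b_n)$ with all $b_i>0$: this is almost immediate from the observation (already recorded in the excerpt) that $\langle Q_k^2\rangle = \bm q_k^T H_k \bm q_k$ where $H_k$ is the leading $(k+1)\times(k+1)$ principal submatrix of $H_n$, which is positive definite because $H_n$ is; hence each $\langle Q_k^2\rangle>0$ and the quotients $b_k$ and the ratios defining $a_k$ make sense. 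So the forward map $\mathbf M\mapsto(a_\bullet,b_\bullet)$ lands in the claimed target set.

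Next I would construct the inverse. Given any $(a_0,\dots,a_{n-1},b_0,\dots,b_n)$ with $b_i>0$, define polynomials $Q_{-1}=0$, $Q_0=1$ and $Q_{k+1}=(X-a_k)Q_k-b_kQ_{k-1}$ for $k=0,\dots,n-1$; these are monic of degree $k$. The key point is that the ``orthogonality'' data determines all the moments. Concretely, I would define a linear functional $\langle\cdot\rangle$ on $\mathbb R[X]_{2n}$ by declaring $\langle 1\rangle = b_0$, $\langle Q_i Q_j\rangle = 0$ for $i\neq j$, and $\langle Q_k^2\rangle = b_0 b_1\cdots b_k$ (consistent with (\ref{eq:ab})), and then show this is well-defined on all of $\mathbb R[X]_{2n}$: since $\{Q_iQ_j : i+j\le 2n,\ i\le n,\ j\le n\}$ together with lower-degree products spans $\mathbb R[X]_{2n}$, one needs to check consistency, which follows by induction on degree using the three-term recursion to reduce any $X\cdot Q_k$ or $Q_iQ_j$ to a combination of the $Q_\ell$'s. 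Setting $M_k := \langle X^k\rangle$ recovers a moment vector, and the associated Hankel matrix satisfies $\bm p^T H_n \bm p = \langle p^2\rangle = \sum_\ell p_\ell^2\, b_0\cdots b_\ell > 0$ for nonzero $p$ (expanding $p$ in the $Q_\ell$ basis), so $\mathbf M\in\Omega_{2n}$. Checking that these two constructions compose to the identity in both orders is then a matter of uniqueness: the monic orthogonal family for a given positive-definite functional is unique, and conversely the recursion coefficients are uniquely read off via (\ref{eq:ab}).

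The main obstacle I anticipate is the well-definedness of the functional $\langle\cdot\rangle$ in the inverse direction --- that is, proving that prescribing values on the products $Q_iQ_j$ actually yields a consistent linear functional on the whole space $\mathbb R[X]_{2n}$ (equivalently, that the Gram-type constraints are not overdetermined). The clean way to handle this is to avoid defining $\langle\cdot\rangle$ via the $Q_iQ_j$ and instead build the Hankel matrix $H_n$ directly by a forward induction on its anti-diagonals: the entries $M_0,\dots,M_{2k}$ are forced by $b_0,\dots,b_k$ and $a_0,\dots,a_{k-1}$, and $M_{2k+1}$ is then forced (linearly) by $a_k$, using precisely the dependency structure noted in the excerpt ($b_k$ depends on $M_0,\dots,M_{2k}$ linearly in $M_{2k}$; $a_k$ depends on $M_0,\dots,M_{2k+1}$ linearly in $M_{2k+1}$). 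This makes the inverse map manifestly well-defined and polynomial in the data, and positivity of $H_n$ then follows from the $\langle p^2\rangle$ computation above. Since this is Theorem 5.10 of \cite{gtm277}, I would in practice cite it, but the sketch above is the self-contained argument.
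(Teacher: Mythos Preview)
The paper does not supply its own proof of this proposition; it simply records it as Theorem~5.10 of \cite{gtm277} and uses it as a black box. Your self-contained sketch is correct and is essentially the standard Favard-type argument that underlies that theorem: the forward map is well-defined because positive definiteness of $H_n$ forces $\langle Q_k^2\rangle>0$, and for the inverse you correctly identify that the cleanest route is to build the moments $M_0,M_1,\dots,M_{2n}$ one at a time using the triangular dependency structure (each $b_k$ pins down $M_{2k}$, each $a_k$ pins down $M_{2k+1}$), after which orthogonality of the recursively defined $Q_k$ follows by induction and positive definiteness of $H_n$ follows from the diagonal Gram computation $\langle p^2\rangle=\sum_\ell p_\ell^2\, b_0\cdots b_\ell$. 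The only point worth making explicit in a full write-up is the inductive verification that the $Q_k$ you build from the recursion really are orthogonal with respect to the functional you construct from the moments---this is needed before you can invoke the Gram identity---but it is a routine three-term-recursion calculation. Since your final sentence already says you would cite \cite{gtm277} in practice, your proposal aligns with what the paper actually does.
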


On the other hand, a vector (moment) $\mathbf M' = (M_0,\dots,M_{2n-1})$ of even dimension is called strict realizable if it belongs to
\[
  \Omega_{2n-1} = \{\mathbf M'\in\mathbb R^{2n} | H_{n-1}(\mathbf M') \text{ is positive definite} \}.
\]
For $M'\in\Omega_{2n-1}$, we can also define a functional $\langle \cdot \rangle_{\mathbf M'}$ on $\mathbb R[X]_{2n-1}$ as in (\ref{eq:functl}) and correponding orthogonal polynomials $Q_0,\dots,Q_{n-1}\in\mathbb R[X]_{n-1}$. Notice that now $a_{n-1}$ and $Q_n$ are also well defined. In this case, we have the following analogue of Proposition \ref{prop:bij1}:

\begin{proposition}[Theorem 5.10 of \cite{gtm277}] \label{prop:bij2}
  The relation (\ref{eq:ab}), together with (\ref{eq:qrec}), defines a bijection between $\Omega_{2n-1}$ and
  \[
    \{(a_0,\dots,a_{n-1},b_0,\dots,b_{n-1})\in \mathbb R^{2n} | b_i>0 \text{ for } i=0,\dots,n-1\}.
  \]
\end{proposition}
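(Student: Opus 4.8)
The plan is to reduce the statement to Proposition \ref{prop:bij1}, exploiting a simple cylinder structure on both sides. The first observation is that the Hankel matrix $H_{n-1}(\mathbf M')$ of $\mathbf M'=(M_0,\dots,M_{2n-1})$ involves only the entries $M_0,\dots,M_{2n-2}$ and not $M_{2n-1}$. Hence $\mathbf M'\in\Omega_{2n-1}$ if and only if $(M_0,\dots,M_{2n-2})\in\Omega_{2n-2}$, with $M_{2n-1}$ free; i.e. $\Omega_{2n-1}$ is the cylinder $\Omega_{2n-2}\times\mathbb R$ over the coordinate $M_{2n-1}$. On the parameter side, the target set splits correspondingly as
\[
  \{(a_0,\dots,a_{n-2},b_0,\dots,b_{n-1})\in\mathbb R^{2n-1} : b_i>0 \text{ for } i=0,\dots,n-1\}\times\mathbb R,
\]
the last factor being the free coordinate $a_{n-1}$.

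Next I would check that, on the first factors, the map in question is exactly the bijection furnished by Proposition \ref{prop:bij1} with $n$ replaced by $n-1$. Indeed, the restriction of $\langle\cdot\rangle_{\mathbf M'}$ to $\mathbb R[X]_{2n-2}$ coincides with $\langle\cdot\rangle_{(M_0,\dots,M_{2n-2})}$, and, by the dependency remarks recorded right after (\ref{eq:ab}), the polynomials $Q_0,\dots,Q_{n-1}$ and the coefficients $a_0,\dots,a_{n-2}$, $b_0,\dots,b_{n-1}$ only involve moments up to $M_{2n-2}$; so they are computed identically from $\mathbf M'$ and from its truncation $(M_0,\dots,M_{2n-2})$. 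In particular the forward map lands in the asserted target set: positive definiteness of $H_{n-1}$ makes $\langle\cdot\rangle$ a genuine inner product on $\mathbb R[X]_{n-1}$, hence $\langle Q_k^2\rangle>0$ and $b_k>0$ for $k=0,\dots,n-1$.

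It then remains to analyze the restriction of the map to a single fiber: for fixed $(M_0,\dots,M_{2n-2})\in\Omega_{2n-2}$, I must show $M_{2n-1}\mapsto a_{n-1}$ is a bijection of $\mathbb R$ onto $\mathbb R$. Using $a_{n-1}=\langle XQ_{n-1}^2\rangle/\langle Q_{n-1}^2\rangle$ together with the telescoping identity $\langle Q_{n-1}^2\rangle=b_0b_1\cdots b_{n-1}>0$ (which involves only $M_0,\dots,M_{2n-2}$) and the fact that $Q_{n-1}^2$ is monic of degree $2n-2$, so that $XQ_{n-1}^2=X^{2n-1}+(\text{terms of degree}\le 2n-2)$ and therefore $\langle XQ_{n-1}^2\rangle=M_{2n-1}+c$ with $c=c(M_0,\dots,M_{2n-2})$, one gets $a_{n-1}=(M_{2n-1}+c)/(b_0\cdots b_{n-1})$, an affine bijection in $M_{2n-1}$ with inverse $M_{2n-1}=(b_0\cdots b_{n-1})\,a_{n-1}-c$. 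Combining the base bijection of Proposition \ref{prop:bij1} with these fiberwise affine bijections gives the desired bijection; its inverse is: run (\ref{eq:qrec}) to build $Q_0,\dots,Q_{n-1}$ from the prescribed $a$'s and $b$'s, recover $M_0,\dots,M_{2n-2}$ via the inverse of Proposition \ref{prop:bij1}, and finally set $M_{2n-1}=(b_0\cdots b_{n-1})\,a_{n-1}-c$.

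The only mildly delicate point — the \emph{main obstacle}, such as it is — is the bookkeeping of which moments each quantity depends on, so that the cylinder structure is genuinely respected: one must be certain that $Q_{n-1}$, $b_{n-1}$ and the constant $c$ are untouched as $M_{2n-1}$ varies, and that nothing borrowed from the Proposition \ref{prop:bij1} part secretly requires $M_{2n-1}$. All of this is immediate from the dependency statements already made after (\ref{eq:ab}), so no new estimate is needed.
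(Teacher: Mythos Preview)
Your reduction is correct. Note, however, that the paper does not supply its own proof of this proposition: it is quoted verbatim as Theorem~5.10 of \cite{gtm277}, just like Proposition~\ref{prop:bij1}, so there is no in-paper argument to compare against. Your approach---factoring $\Omega_{2n-1}\cong\Omega_{2n-2}\times\mathbb R$ over the free coordinate $M_{2n-1}$, invoking Proposition~\ref{prop:bij1} on the base, and handling the fiber via the affine dependence $a_{n-1}=(M_{2n-1}+c)/\langle Q_{n-1}^2\rangle$---is a clean and standard way to deduce the even-dimensional case from the odd-dimensional one, and every step is justified by the dependency remarks already recorded after~(\ref{eq:ab}).
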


Furthermore, we have the following important fact.
\begin{proposition}[Theorem 9.7 of \cite{gtm277}] \label{prop:qmomrecons}
  The map
  \[
    \{ W=(w_i,u_i)_{i=1}^n \in\mathbb R^{2n} | w_i>0,\ \forall i, \ u_1<u_2<\dots<u_n \} \longrightarrow
    \Omega_{2n-1}
  \]
  with
  \[
    M_j = \sum_{i=1}^n w_i u_i^j, \quad j=0,\dots,2n-1
  \]
  is a bijection. Moreover, $u_1,\dots,u_n$ are just the zeros of $Q_n$.
\end{proposition}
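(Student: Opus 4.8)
The plan is to recast the statement as the classical theory of Gaussian quadrature and to verify it in three steps: the map is well defined (its image lies in $\Omega_{2n-1}$), it is surjective with the nodes necessarily being the zeros of $Q_n$, and it is injective. For well-definedness, fix $W=(w_i,u_i)_{i=1}^n$ with $w_i>0$ and $u_1<\dots<u_n$, and let $\mathbf M$ be its image. For $\bm p=(p_0,\dots,p_{n-1})^T\in\mathbb R^n$ nonzero and $p=\sum_{k=0}^{n-1}p_kX^k$, a direct computation gives
\[
  \bm p^T H_{n-1}(\mathbf M)\,\bm p=\sum_{i=1}^n w_i\,p(u_i)^2 \ge 0,
\]
and the right-hand side vanishes only if $p(u_1)=\dots=p(u_n)=0$, which is impossible for a nonzero $p\in\mathbb R[X]_{n-1}$ and $n$ distinct nodes. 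Hence $H_{n-1}(\mathbf M)$ is positive definite, i.e., $\mathbf M\in\Omega_{2n-1}$.

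For surjectivity, take $\mathbf M\in\Omega_{2n-1}$. By Proposition \ref{prop:inter}, applied to $Q_n$ (which is well defined on $\Omega_{2n-1}$ together with $a_{n-1}$), the polynomial $Q_n$ has $n$ distinct real zeros; order them as $u_1<\dots<u_n$ and define $w_1,\dots,w_n$ as the unique solution of the (Vandermonde) linear system $\sum_{i=1}^n w_iu_i^j=M_j$, $j=0,\dots,n-1$. I then claim $\sum_i w_iu_i^j=M_j$ holds for \emph{all} $j=0,\dots,2n-1$: for $p\in\mathbb R[X]_{2n-1}$ write $p=qQ_n+r$ with $\deg q,\deg r\le n-1$; then $\langle p\rangle_{\mathbf M}=\langle r\rangle_{\mathbf M}$ because $Q_n$ is orthogonal to $\mathbb R[X]_{n-1}$, while $\sum_i w_ip(u_i)=\sum_i w_ir(u_i)=\langle r\rangle_{\mathbf M}$ because $Q_n(u_i)=0$ and the $w_i$ reproduce $M_0,\dots,M_{n-1}$, hence all of $\mathbb R[X]_{n-1}$ by linearity. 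Taking $p=X^j$ gives the claim. Positivity of the weights follows from the Lagrange-basis trick: with $\ell_i(X)=\prod_{j\neq i}\frac{X-u_j}{u_i-u_j}\in\mathbb R[X]_{n-1}$ and coefficient vector $\bm\ell_i\neq0$, the exactness just proved (valid since $\deg\ell_i^2=2n-2\le2n-1$) gives $w_i=\sum_k w_k\ell_i(u_k)^2=\langle\ell_i^2\rangle_{\mathbf M}=\bm\ell_i^T H_{n-1}(\mathbf M)\bm\ell_i>0$. Thus $W=(w_i,u_i)_{i=1}^n$ lies in the domain and maps to $\mathbf M$.

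For injectivity, suppose $W=(w_i,u_i)_{i=1}^n$ maps to $\mathbf M\in\Omega_{2n-1}$, and set $\omega(X)=\prod_{i=1}^n(X-u_i)$, monic of degree $n$. Then $\langle\omega X^k\rangle_{\mathbf M}=\sum_i w_i\omega(u_i)u_i^k=0$ for $k=0,\dots,n-1$, so $\omega$ is orthogonal to $\mathbb R[X]_{n-1}$. Such a monic polynomial is unique: writing $\omega=X^n+\sum_{k=0}^{n-1}c_kX^k$, the conditions $\langle\omega X^k\rangle_{\mathbf M}=0$ amount to a linear system for $(c_0,\dots,c_{n-1})$ whose matrix is $H_{n-1}(\mathbf M)$, which is invertible; and $Q_n$ already satisfies the same conditions. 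Hence $\omega=Q_n$, so the $u_i$ are exactly the zeros of $Q_n$ and are determined by $\mathbf M$, and the $w_i$ are then the unique solution of the Vandermonde system on $M_0,\dots,M_{n-1}$ and are likewise determined. Combining the three steps yields the bijection, and the identification of the nodes with the zeros of $Q_n$ is built into both the surjectivity construction and the injectivity argument, giving the ``moreover'' part.

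The argument is mostly bookkeeping, and I expect no serious obstacle; the only genuine inputs are (i) that $Q_n$ has $n$ distinct real zeros, which is Proposition \ref{prop:inter}, and (ii) that $H_{n-1}(\mathbf M)$ is positive definite, which is used twice — once to make the monic degree-$n$ orthogonal polynomial unique, and once to force the Lagrange weights to be positive. The one point deserving care is that $Q_n$, $a_{n-1}$, and the orthogonality $\langle Q_nX^k\rangle_{\mathbf M}=0$ for $k\le n-1$ involve only $M_0,\dots,M_{2n-1}$, so every manipulation above stays within $\mathbb R[X]_{2n-1}$, where the functional $\langle\cdot\rangle_{\mathbf M}$ is defined.
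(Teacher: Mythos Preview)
The paper does not supply its own proof of this proposition; it is quoted as Theorem~9.7 of \cite{gtm277} and used as a black box. Your argument is the standard Gaussian-quadrature proof of this classical result and is correct: well-definedness via positive definiteness of $H_{n-1}$, surjectivity via the division $p=qQ_n+r$ and the Lagrange-weight identity $w_i=\langle\ell_i^2\rangle_{\mathbf M}>0$, and injectivity via uniqueness of the monic degree-$n$ polynomial orthogonal to $\mathbb R[X]_{n-1}$ (the coefficient system having matrix $H_{n-1}$). Your closing remark also correctly flags the only subtle point, namely that $Q_n$ and the relations $\langle Q_nX^k\rangle_{\mathbf M}=0$ for $k\le n-1$ involve only $M_0,\dots,M_{2n-1}$, so that Proposition~\ref{prop:inter} applies on $\Omega_{2n-1}$.
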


\subsection{Quadrature methods of moments} \label{subsec:qmom}
Here we introduce two widely used quadrature methods of moments. The first one is the quadrature method of moments (QMOM).
In this method, we take $N=2n-1$ in (\ref{eq:unc1}) and close (\ref{eq:unc1}) by specifying $M_{2n}=M_{2n}(\mathbf M')$ in the following way:
\begin{equation} \label{eq:qm2}
  M_{2n} = \sum_{i=1}^n w_i u_i^{2n},
\end{equation}
where $w_i$ and $u_i$ are uniquely determined by $\mathbf M'\in\Omega_{2n-1}$ according to Proposition \ref{prop:qmomrecons}. Equivalently, the QMOM can be understood as the ansatz $f(\xi)=\sum_{i=1}^nw_i\delta(\xi-u_i)$.

This closure is equivalent to $\langle Q_n^2 \rangle_\mathbf M  = 0$ with the functional induced by $\mathbf M=(\mathbf M',M_{2n})$. Indeed, from definition (\ref{eq:functl}) we can easily see that
\begin{equation} \label{eq:pMbrac}
  \langle p \rangle_{\mathbf M'} = \sum_{i=1}^n w_i p(u_i),\quad p\in\mathbb R[X]_{2n-1}.
\end{equation}
Then we deduce from Proposition \ref{prop:qmomrecons} that
\[\begin{aligned}
\langle Q_n^2\rangle_{\mathbf M}=&\langle X^{2n}\rangle_{\mathbf M}-\langle X^{2n} - Q_n^2 \rangle_{\mathbf M}=\langle X^{2n}\rangle_{\mathbf M}-\langle X^{2n} - Q_n^2 \rangle_{\mathbf M'} \\
=& M_{2n}-\sum_i w_i (u_i^{2n} - Q_n^2(u_i))=M_{2n}-\sum_i w_i u_i^{2n}.
\end{aligned}\]
Consequently, $\langle Q_n^2 \rangle_\mathbf M  = 0$ is equivalent to (\ref{eq:qm2}).

While the QMOM becomes a popular method in solving size distributions of the particulate flow \cite{HuangEF2,MarFox2013}, it is found that the QMOM-induced system (\ref{eq:unc1}) is not hyperbolic \cite{Chalons2012,Huang2020}. Therefore, various improvements have been proposed in literature.
One of them is the extended QMOM (EQMOM).

The EQMOM assumes that the distribution $f(\xi)$ is a weighted summation of $n$ shifted homoscedastic kernels:
\[
  f(\xi) = \sum_{i=1}^n w_i \delta_{\sigma}(\xi;\xi_i).
\]
This Ansatz contains $2n+1$ unknowns collected as $W=(w_i>0,\xi_i\in\mathbb R,\sigma>0)_{i=1}^n\in\mathbb R^{2n+1}$.
Taking moments of the Ansatz up to $2n$-th order leads to a map from $W$ to $\mathbf M$.
If this map is invertible, the unclosed $M_{2n+1}$ in (\ref{eq:unc1}) can be computed in terms of $W$ and thereby the moments $\mathbf M$.
If the kernel $\delta_\sigma (\xi;\xi_i)$ is chosen to be Gaussian $\phi_\sigma(\xi-\xi_i)$ \cite{Chalons2010}, the resultant moment closure system (\ref{eq:unc1}) has been shown in \cite{Huang2020} to be strict hyperbolic and satisfy the structural stability condition. More general kernels
\begin{equation} \label{eq:eqker}
  \delta_\sigma (\xi;\xi_i) = \frac{1}{\sigma} \calk \left( \frac{\xi-\xi_i}{\sigma} \right)
\end{equation}
are considered in \cite{ZHY2023}, and a class of $\calk(\xi)$ is identified to ensure the hyperbolicity and structural stability.

A possible drawback of EQMOM is that the image of the map, or equivalently, the domain of (\ref{eq:unc1}) after closure, is a proper subset of $\Omega_{2n}$ (see Proposition 3.1 in \cite{Chalons2017} for the Gaussian case with $n=2$). Therefore, it cannot handle all realizable moments.

\subsection{Hyperbolic QMOM} \label{subsec:hypqm}
The focus of this paper is the following improvement called hyperbolic QMOM (HyQMOM) \cite{FoxLau2022,van2021}. In this method, we take $N=2n$ in (\ref{eq:unc1}) and seek a closure for $M_{2n+1}=M_{2n+1}(\mathbf M)$ on the realizable domain $\Omega_{2n}$.

For any $M_{2n+1}\in\mathbb R$ and $\mathbf M\in\Omega_{2n}$, we write $\tilde{\mathbf M} = (\mathbf M, M_{2n+1}) \in \mathbb R^{2n+2}$ and define a functional $\langle \cdot \rangle_{\tilde{\mathbf M}}$ on $\mathbb R[X]_{2n+1}$ as in subsection \ref{subsec:orthpoly}.
Consequently, we get the orthogonal polynomials $Q_0,\dots,Q_{n+1}\in\mathbb R[X]_{n+1}$ and real numbers $a_0,\dots,a_n,b_0,\dots,b_n$.
Since the restriction of $\langle \cdot \rangle_{\tilde{\mathbf M}}$ on $\mathbb R[X]_{2n}$ is just $\langle \cdot \rangle_{\mathbf M}$, we know from Proposition \ref{prop:bij1} that $a_0,\dots,a_{n-1},b_0,\dots,b_n$ are determined by $\mathbf M$. Therefore, there exists a one-to-one correspondence between $a_n$ and $M_{2n+1}$.

Thanks to the above reasoning, the HyQMOM closure for $M_{2n+1}$ can be achieved by taking \cite{FoxLau2022}
\begin{equation} \label{eq:hyqmom}
  a_n = \frac{1}{n} \sum_{k=0}^{n-1}a_k.
\end{equation}
A generalized version of HyQMOM reads as \cite{van2021}
\begin{equation} \label{eq:ghyq}
  a_n=\frac{\gamma}{n} \sum_{k=0}^{n-1}a_k
\end{equation}
with $\gamma>-2n$ a scaling factor. Since $a_n$ is linearly dependent on $M_{2n+1}$ (see (\ref{eq:ab})), these relations do give convenient closures for $M_{2n+1}$.

While the HyQMOM exhibits good performance in numerical simulations \cite{FoxLau2022,LiMar2022,Mueller2023}, its theoretical understanding is far from satisfactory. The hyperbolicity is verified by resorting to the MATLAB only for the closure (\ref{eq:hyqmom}) with $n\le 9$, and is conjectured for the general closure (\ref{eq:ghyq}) \cite{FoxLau2022,van2021}. Moreover, the dissipativeness of HyQMOM-induced moment systems remains unclear.

\subsection{Structural stability condition} \label{subsec:ssc}

Given a closure $M_{N+1}(\mathbf M)$, the moment system (\ref{eq:unc1}) of first-order PDEs has a coefficient matrix
\[
  A(\mathbf M):= \diffp{{\mathcal F(\mathbf M)}}{{\mathbf M}} \in \mathbb R^{(N+1)\times (N+1)}
\]
on the domain $\mathbb G$.
The system (\ref{eq:unc1}) is called \textit{hyperbolic} if $A(\mathbf M)$ has $(N+1)$ linearly independent real eigenvectors \cite{Serre1999}. If $A(\mathbf M)$ has $(N+1)$ distinct real eigenvalues, it is called \textit{strictly hyperbolic}.
Moreover, the structural stability condition proposed in \cite{Yong1999} is a weak version of certain entropy conditions which characterize the dissipativeness of the moment system. Herein we restate it for the 1D system (\ref{eq:unc1}) with a non-empty equilibrium manifold $\mathcal E = \{\mathbf M \in \mathbb G \mid S(\mathbf M)=0\}$.
Denote by $S_\mathbf M(\mathbf M)$ the Jacobian matrix of $S(\mathbf M)$. The structural stability condition reads as
\begin{itemize}
    \item [(I)] For any $\mathbf M \in \mathcal E$, there exist invertible matrices $P=P(\mathbf M)\in \mathbb R^{(N+1) \times (N+1)}$ and $\hat T = \hat T(\mathbf M) \in \mathbb R^{r \times r}$ ($0<r \le N+1$) such that
    \[
    P S_\mathbf M(\mathbf M) P^{-1} = \text{diag}(\textbf{0}_{(N+1-r) \times (N+1-r)}, \hat T).
    \]

    \item [(II)] For any $\mathbf M \in \mathbb G$, there exists a positive definite symmetric matrix $A_0 = A_0(\mathbf M)$ such that $A_0 A(\mathbf M) = A^T(\mathbf M) A_0$.

    \item [(III)] For any $\mathbf M \in \mathcal E$, the coefficient matrix and the source are coupled as
    \[
        A_0 S_\mathbf M (\mathbf M) + S_\mathbf M^T (\mathbf M) A_0 \le - P^T
        \begin{bmatrix}
            0 & 0 \\
            0 & I_r
        \end{bmatrix}
        P.
    \]
\end{itemize}
Here $I_r$ is the unit matrix of order $r$.

\begin{remark} \label{rem:sym}
  For the 1-D system (\ref{eq:unc1}), Condition (II) is satisfied if and only if the system is hyperbolic \cite{Huang2020}.
  Condition (III) is a proper manifestation of the dissipative property inherited from the kinetic model. See detailed discussions in \cite{Yong2001}.

  Recently, it has been demonstrated that several moment models respect the structural stability condition \cite{Di2017,Huang2022,Zhao2017}. For the quadrature-based methods, a series of analyses was performed in our previous works for the EQMOM with Gaussian and more general kernels (\ref{eq:eqker}) \cite{Huang2020,ZHY2023}.
\end{remark}

\section{Main results} \label{sec:result}

In this section, we present our main results. The first one is

\begin{theorem} [Hyperbolicity] \label{thm:hyp}
  The HyQMOM closure (\ref{eq:ghyq}) yields a strictly hyperbolic moment system (\ref{eq:unc1}) for $\mathbf M \in \Omega_{2n}$.
\end{theorem}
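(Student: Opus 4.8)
The plan is to compute the characteristic polynomial of the coefficient matrix $A(\mathbf M)$ explicitly and exhibit a factorization whose factors have only simple real roots. Since the flux is $\mathcal F(\mathbf M)=(M_1,\dots,M_{2n},M_{2n+1}(\mathbf M))$, the Jacobian $A(\mathbf M)$ is a companion-type matrix: its first $2n$ rows are shifted identity rows, and only the last row, $\partial M_{2n+1}/\partial \mathbf M$, is nontrivial. Hence $A(\mathbf M)$ is similar to the companion matrix of the polynomial $\lambda^{2n+1}-\sum_{k=0}^{2n}(\partial M_{2n+1}/\partial M_k)\lambda^{k}$, and strict hyperbolicity reduces to showing this degree-$(2n{+}1)$ polynomial has $2n+1$ distinct real roots. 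The key is to re-express it through the orthogonal-polynomial data $(a_0,\dots,a_{n},b_0,\dots,b_{n})$. Using the recursion (\ref{eq:qrec}) and the fact (from (\ref{eq:ab})) that $a_k$ depends linearly on $M_{2k+1}$ while $b_k$ depends linearly on $M_{2k}$, one can change variables from $\mathbf M$ to the recursion coefficients; the closure (\ref{eq:ghyq}) specifies $a_n$ as a fixed linear combination of $a_0,\dots,a_{n-1}$, so $\partial M_{2n+1}/\partial \mathbf M$ is determined.

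The crucial computation is to identify the characteristic polynomial with (a multiple of) a polynomial built from the $Q_k$'s. I expect that the relevant object is $Q_{n+1}(\lambda) - c\,b_{n}\,Q_{n-1}(\lambda)$ for a suitable constant $c$ coming from $\gamma$ — equivalently, a polynomial of the form $(\lambda-\tilde a_n)Q_n(\lambda)-\tilde b_n Q_{n-1}(\lambda)$ where $\tilde a_n,\tilde b_n$ encode the closure. Such a polynomial is precisely an $(n{+}1)$-st orthogonal polynomial for a \emph{modified} realizable moment sequence, so by Proposition \ref{prop:inter} it has $n+1$ distinct real roots, and its roots interlace those of $Q_n$. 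To get the full set of $2n+1$ eigenvalues, I anticipate that $\partial \mathcal F/\partial \mathbf M$ does not act as a single companion block but rather that the characteristic polynomial \emph{factors} — most plausibly the eigenvalues of $A(\mathbf M)$ are the $n+1$ zeros of one modified orthogonal polynomial together with the $n$ zeros of $Q_n$ (the quadrature abscissae), and the interlacing from Proposition \ref{prop:inter} guarantees all $2n+1$ are distinct. Establishing this factorization cleanly — rather than by brute-force determinant expansion — is the main obstacle; the right tool is likely to write $A$ in the basis dual to $\{Q_0,\dots,Q_{n}\}$ (a Jacobi-matrix-like representation) so that $A$ becomes a bordered tridiagonal matrix whose determinant telescopes via the three-term recurrence.

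Concretely, the steps are: (i) write $A(\mathbf M)$ in companion form and reduce strict hyperbolicity to simplicity/reality of the roots of $\chi(\lambda)=\det(\lambda I-A)$; (ii) pass to the coordinates $(a_k,b_k)$ and, using (\ref{eq:ghyq}) together with the chain rule through (\ref{eq:qrec})–(\ref{eq:ab}), compute $\chi$; (iii) recognize $\chi$ as a product of $Q_n(\lambda)$ (or $Q_{n+1}$) with a modified orthogonal polynomial $\hat Q(\lambda)=(\lambda-\hat a)Q_n(\lambda)-\hat b\,Q_{n-1}(\lambda)$, checking that the scaling constraint $\gamma>-2n$ is exactly what makes the associated $\hat b>0$ so that $\hat Q$ is genuinely orthogonal; (iv) invoke Propositions \ref{prop:inter} and \ref{prop:bij1}/\ref{prop:bij2} to conclude that the combined root set consists of $2n+1$ distinct reals, using interlacing to rule out coincidences between the zeros of $Q_n$ and those of $\hat Q$. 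The condition $\gamma > -2n$ in step (iii) is where I expect the sign bookkeeping to be delicate, since it must translate precisely into positivity of a $b$-type coefficient for the modified moment sequence.
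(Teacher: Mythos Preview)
Your proposal is correct in outline and anticipates exactly the right factorization: the characteristic polynomial is $Q_n\cdot R_{n+1}$ with $R_{n+1}=(X-a_n)Q_n-\frac{2n+\gamma}{n}b_nQ_{n-1}$, the condition $\gamma>-2n$ is precisely what makes the coefficient $\frac{2n+\gamma}{n}b_n$ positive, and the interlacing argument from Proposition~\ref{prop:inter} then yields $2n+1$ distinct real roots. Where you diverge from the paper is in how the factorization is \emph{established}. You propose a chain-rule computation through the $(a_k,b_k)$ coordinates, or a Jacobi-basis determinant expansion; the paper instead proves a general criterion (Proposition~\ref{prop:clos}): if the closure is written as $M_{N+1}=\langle X^{N+1}-G\rangle_{\mathbf M}$ for a monic $G$, then $G$ \emph{is} the characteristic polynomial if and only if $\langle\partial_{\mathbf M}G\rangle_{\mathbf M}=0$. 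For HyQMOM one takes $G=Q_nR_{n+1}$ and the verification of $\langle\partial_{\mathbf M}G\rangle=0$ collapses to a short orthogonality computation (terms like $\langle Q_n\,\partial_{\mathbf M}Q_{n-1}\rangle$ vanish by degree), with the constant $\frac{2n+\gamma}{n}$ emerging exactly so that the surviving terms cancel. This sidesteps the determinant/chain-rule bookkeeping you correctly flag as the main obstacle, and as a bonus furnishes a reusable recipe for building other hyperbolic closures (the paper exhibits one in Proposition~\ref{prop:newclos}). Your direct route would also succeed, but the algebra is heavier and does not deliver this side benefit.
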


This theorem means that the coefficient matrix $A(\mathbf M)$ has $2n+1$ distinct eigenvalues $\lambda_0<\dots<\lambda_{2n}$ for $\mathbf M \in \Omega_{2n}$. An interesting consequence of this theorem is
\begin{proposition} \label{prop:hqrecons2}
  For the moment system (\ref{eq:unc1}) with the closure (\ref{eq:ghyq}) for $\gamma>-n$, there exist $\{\omega_i>0\}_{i=0}^{2n}$ such that
  \[
    M_k = \sum_{i=0}^{2n} \omega_i \lambda_i^k, \quad k=0,\dots,2n.
  \]
\end{proposition}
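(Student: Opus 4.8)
The plan is to realize the eigenvalues $\lambda_0<\dots<\lambda_{2n}$ of $A(\mathbf M)$ as the nodes of a Gauss-type quadrature for the functional $\langle\cdot\rangle_{\mathbf M}$, and then invoke Proposition \ref{prop:qmomrecons}. The natural guess is that $\lambda_0,\dots,\lambda_{2n}$ are precisely the roots of $Q_{2n+1}$, the $(2n+1)$-st orthogonal polynomial of the functional $\langle\cdot\rangle_{\tilde{\mathbf M}}$ with $\tilde{\mathbf M}=(\mathbf M,M_{2n+1})$ obtained from the HyQMOM closure (\ref{eq:ghyq}). First I would write the coefficient matrix $A(\mathbf M)$ in terms of the recursion coefficients $a_0,\dots,a_n,b_0,\dots,b_n$. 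Because $\mathcal F$ shifts moments by one and $M_{2n+1}$ is a fixed function of $\mathbf M$ via $a_n=\tfrac{\gamma}{n}\sum_{k<n}a_k$, I expect (after a change of basis from the monomial coordinates $M_k$ to the orthogonal-polynomial coordinates, or by a direct Jacobian computation as in the hyperbolicity proof this paper will give) that $A(\mathbf M)$ is similar to the $(2n+1)\times(2n+1)$ Jacobi-type matrix whose characteristic polynomial factors through the three-term recursion (\ref{eq:qrec}), so that $\det(\lambda I-A(\mathbf M))$ is, up to a nonzero constant, $Q_{2n+1}(\lambda)$ with the extended coefficients $a_{n+1},\dots,a_{2n},b_{n+1},\dots,b_{2n}$ induced by the HyQMOM prolongation. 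Theorem \ref{thm:hyp} already guarantees these $2n+1$ roots are real and distinct, so I only need the \emph{location}/\emph{positivity-of-weights} statement, not a re-proof of hyperbolicity.

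The key step is then a quadrature/moment-matching argument. Set $\boldsymbol\lambda=(\lambda_0,\dots,\lambda_{2n})$ and consider the Vandermonde system $\sum_{i=0}^{2n}\omega_i\lambda_i^k=M_k$, $k=0,\dots,2n$: since the $\lambda_i$ are distinct, this determines $\omega_0,\dots,\omega_{2n}$ uniquely, and the content of Proposition \ref{prop:hqrecons2} is exactly that all $\omega_i>0$. By Proposition \ref{prop:qmomrecons} (applied with $n\rightsquigarrow 2n+1$ and the even-length moment vector $\mathbf M'=(M_0,\dots,M_{4n+1})$, where $M_{2n+1},\dots,M_{4n+1}$ are the HyQMOM-extended moments), positivity of the weights is equivalent to $\lambda_0,\dots,\lambda_{2n}$ being the zeros of the orthogonal polynomial $Q_{2n+1}$ associated with the extended realizable moment $\mathbf M'$, which in turn holds as soon as that extension $\mathbf M'$ is strictly realizable, i.e.\ all the extended $b_{n+1},\dots,b_{2n+1}>0$. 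So I would (i) confirm the characteristic-polynomial identification $\det(\lambda I - A(\mathbf M)) \propto Q_{2n+1}(\lambda)$, and (ii) show that the recursion coefficients $a_{n+1},\dots,a_{2n}$ and $b_{n+1},\dots,b_{2n}$ produced along the way keep $b_j>0$. Step (ii) is where the restriction $\gamma>-n$ (rather than merely $\gamma>-2n$ as in Theorem \ref{thm:hyp}) should enter: I expect that continuing the HyQMOM-type averaging recursion past level $n$ forces $b_{n+1}>0$ precisely when $\gamma>-n$, via an explicit formula for $b_{n+1}$ in terms of $\gamma$ and the differences $a_k-a_n$.

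The main obstacle I anticipate is step (i): pinning down exactly which polynomial the characteristic polynomial of $A(\mathbf M)$ is, including the normalization and the precise values of the continued recursion coefficients $a_j,b_j$ for $j>n$. The map $\mathbf M\mapsto A(\mathbf M)$ is written in monomial moment coordinates, and its Jacobian structure only becomes transparent after passing to the $(a_k,b_k)$ coordinates via Propositions \ref{prop:bij1}–\ref{prop:bij2}; carrying the closure constraint $a_n=\tfrac{\gamma}{n}\sum a_k$ through the differentiation and reading off the resulting tridiagonal form is the delicate bookkeeping. Once that is in hand, the positivity of the $b_j$'s — and hence of the $\omega_i$ — should reduce to an elementary inequality governed by $\gamma>-n$, and the conclusion follows from Proposition \ref{prop:qmomrecons}. (It is also plausible to bypass (i) entirely by arguing directly that the Gauss quadrature with nodes $\lambda_i$ integrates $\langle\cdot\rangle_{\mathbf M}$ exactly on $\mathbb R[X]_{2n}$ and then using that a Gaussian rule with distinct real nodes that is exact to sufficiently high order has positive weights; I would keep that as a fallback if the explicit characteristic-polynomial computation proves unwieldy.)
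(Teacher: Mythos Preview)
Your plan has a genuine gap at its core. You want to identify the characteristic polynomial of $A(\mathbf M)$ with a single orthogonal polynomial $Q_{2n+1}$ for some ``HyQMOM-prolonged'' moment sequence $(M_0,\dots,M_{4n+1})$, and then invoke Proposition~\ref{prop:qmomrecons} at level $2n+1$. But no such prolongation exists: HyQMOM closes only $M_{2n+1}$ (equivalently, it specifies only $a_n$), and the paper's Proposition~\ref{prop:hypf} shows the characteristic polynomial is the \emph{product} $F=Q_n R_{n+1}$ with $R_{n+1}=(X-a_n)Q_n-\tfrac{2n+\gamma}{n}b_nQ_{n-1}$, not the $(2n+1)$-st orthogonal polynomial of any naturally defined extended sequence. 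So the coefficients $a_{n+1},\dots,a_{2n},b_{n+1},\dots,b_{2n}$ in your step~(ii) are never produced, and step~(i) cannot yield what you want. Your fallback also fails: a $(2n+1)$-node interpolatory rule that is exact merely on $\mathbb R[X]_{2n}$ has no reason to have positive weights; the positivity theorem for Gauss rules requires the nodes to be zeros of an orthogonal polynomial, i.e.\ exactness to degree $4n+1$, which you do not have.

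The paper's proof uses the factorization $F=Q_nR_{n+1}$ directly and splits the node set in two. The $n$ roots of $Q_n$ are, by Proposition~\ref{prop:qmomrecons}, the nodes of a positive-weight quadrature for $(M_0,\dots,M_{2n-1})$; this rule gives $M_{2n}-\langle Q_n^2\rangle$ at level $2n$. Separately, one checks that $Q_0,\dots,Q_n,R_{n+1}$ are orthogonal for the \emph{modified} moment vector with $M_{2n}$ replaced by $M_{2n}+\tfrac{n+\gamma}{n}\langle Q_n^2\rangle$, so the $n+1$ roots of $R_{n+1}$ carry a second positive-weight quadrature which overshoots $M_{2n}$ by $\tfrac{n+\gamma}{n}\langle Q_n^2\rangle$. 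The convex combination with coefficients $\tfrac{n+\gamma}{2n+\gamma}$ and $\tfrac{n}{2n+\gamma}$ then reproduces all of $M_0,\dots,M_{2n}$ with the full set of $2n+1$ nodes and manifestly positive weights; the hypothesis $\gamma>-n$ is exactly what makes $\tfrac{n+\gamma}{2n+\gamma}>0$. The key idea you are missing is to treat the two factors $Q_n$ and $R_{n+1}$ as giving two separate Gauss-type quadratures and then mix them, rather than looking for a single $Q_{2n+1}$.
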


As for the structural stability of HyQMOM, we have

\begin{theorem} [Structural stability] \label{thm:stab}
  The moment system (\ref{eq:unc1}) with the HyQMOM closure (\ref{eq:hyqmom}) satisfies the structural stability condition for $\mathbf M \in \Omega_{2n}$.
\end{theorem}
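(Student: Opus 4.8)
We must verify the three parts (I)--(III) of the structural stability condition for the HyQMOM system \eqref{eq:unc1} with $N=2n$ and closure \eqref{eq:hyqmom}. Condition (II) is already free: by Theorem~\ref{thm:hyp} the system is strictly hyperbolic, hence symmetrizable, so a positive definite $A_0=A_0(\mathbf M)$ exists by Remark~\ref{rem:sym}. The real work is to (a) identify the equilibrium manifold $\mathcal E$, (b) exhibit the block structure in (I), and (c) verify the dissipative inequality (III). The equilibrium manifold is easy to pin down: $S(\mathbf M)=0$ forces $M_k=\rho\Delta_k(U,\theta)$ for $k=3,\dots,2n$, i.e. $\mathcal E$ consists precisely of the moments of the Maxwellian $\rho\phi_{\sqrt\theta}(\xi-U)$, a $3$-parameter family, so $r=2n-2$ (the last $2n-2$ rows of $S$ are active). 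On $\mathcal E$ the equilibrium moments are $M_k=\rho\,\Delta_k(U,\theta)$; by the affine invariance and homogeneity of the HyQMOM (advertised in the abstract) it suffices to treat the standard-normal case $\rho=1$, $U=0$, $\theta=1$, where $M_k$ equals the $k$-th moment $m_k$ of $\mathcal N(0,1)$ — this is where the ``moments of the standard normal distribution'' enter.

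\textbf{Conditions (I) and the choice of $P$.} Because $S$ depends on $\mathbf M$ only through $(\rho,U,\theta)=(M_0,M_1,M_2)$ and the explicit subtraction of $M_k$, the Jacobian $S_\mathbf M$ has a very rigid shape: its first three rows vanish, and rows $4,\dots,2n+1$ have the form $\partial_{\mathbf M}(\rho\Delta_k)-e_k^T$. I would compute $S_\mathbf M$ at a point of $\mathcal E$ directly from $\Delta_k(U,\theta)=\int\xi^k\phi_{\sqrt\theta}(\xi-U)\,d\xi$; the three partials $\partial_\rho,\partial_U,\partial_\theta$ of $\rho\Delta_k$ are again polynomials in the equilibrium moments, so the matrix is explicit. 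The eigenvalue $0$ has the expected multiplicity $3$ (corresponding to conservation of $\rho,\rho U,\rho\theta+\rho U^2$) plus possibly more, and on the complementary invariant subspace $S_\mathbf M$ acts as $-\mathrm{Id}$ plus a nilpotent/rank-deficient correction coming from the $\partial(\rho\Delta_k)$ terms. In fact I expect $S_\mathbf M$ at equilibrium to be \emph{similar to} $\mathrm{diag}(\mathbf 0_{3},-I_{2n-2})$: the map $\mathbf M\mapsto(\rho,U,\theta,M_3,\dots,M_{2n})$ is a global change of coordinates on $\Omega_{2n}$, and in these coordinates $S$ is literally $\frac1\tau(0,0,0,\rho\Delta_3-M_3,\dots)$, whose Jacobian is exactly $\mathrm{diag}(\mathbf 0_3,-\frac1\tau I_{2n-2})$. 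Taking $P$ to be (a constant multiple of) the Jacobian of that coordinate change gives (I) with $\hat T=-\frac1\tau I_{2n-2}$ immediately, and it also makes the right-hand side of (III) a concrete matrix.

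\textbf{Condition (III): the main obstacle.} This is the hard part. We need a single symmetrizer $A_0(\mathbf M)$ that works for (II) on all of $\Omega_{2n}$ \emph{and} satisfies the inequality (III) on $\mathcal E$. The strategy: parametrize symmetrizers. Since $A(\mathbf M)$ is strictly hyperbolic with eigenvector matrix $R$ (columns the right eigenvectors, obtainable from the Vandermonde-type structure in Proposition~\ref{prop:hqrecons2}), every symmetrizer is $A_0=R^{-T}D R^{-1}$ for some positive diagonal $D=\mathrm{diag}(d_0,\dots,d_{2n})$. Condition (III) becomes, after conjugating by $P^{-1}$ and using the block form, an inequality among $d_i$, the eigenvalues $\lambda_i$, and the equilibrium data. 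Here the ``overdetermined system of algebraic equations with positive solutions'' mentioned in the abstract appears: one must choose the $d_i>0$ so that $A_0 S_\mathbf M + S_\mathbf M^T A_0 + P^T\mathrm{diag}(0,I_r)P \le 0$. I would reduce this to showing that a certain explicit symmetric matrix built from the $\omega_i$ of Proposition~\ref{prop:hqrecons2} (the natural candidate is $D=\mathrm{diag}(\omega_i)$, so that $A_0$ is the ``moment'' inner product $\langle pq\rangle$ written in the $\lambda_i$-quadrature basis) is negative semidefinite on $\mathcal E$, with kernel exactly the $3$-dimensional conservation subspace. The key computation is then: on $\mathcal E$, evaluate $A_0 S_\mathbf M$ in the quadrature coordinates and show the symmetric part, plus the penalty $P^T\mathrm{diag}(0,I_r)P$, is $\le 0$ — this should come down to a statement about the orthogonal polynomials of the standard normal distribution and the specific averaging $a_n=\frac1n\sum a_k$ in \eqref{eq:hyqmom} (note Theorem~\ref{thm:stab}, unlike Theorem~\ref{thm:hyp}, is stated only for $\gamma=1$, so the proof must use $\gamma=1$ somewhere, presumably precisely here). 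I expect the bulk of the section to be this verification, with affine invariance used at the start to normalize to $\mathcal N(0,1)$ and homogeneity used to handle the $\rho,\theta$ scalings uniformly.
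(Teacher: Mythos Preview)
Your treatment of Conditions (I) and (II) is essentially what the paper does: the coordinate change $\mathbf M\mapsto(\rho,U,\theta,M_3,\dots,M_{2n})$ gives $P^{-1}$ explicitly and $\hat T=-\frac1\tau I_{2n-2}$, and (II) is the hyperbolicity already proved. Your identification of where $\gamma=1$ enters (via affine invariance, to normalize to the standard normal) is also correct in spirit.

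However, there is a genuine gap in your plan for Condition (III), and it is precisely your ``natural candidate'' $D=\operatorname{diag}(\omega_i)$ with the $\omega_i$ of Proposition~\ref{prop:hqrecons2}. That choice does \emph{not} work. The paper's argument shows that (III) is equivalent (via Theorem~2.1 of \cite{Yong1999}) not to a semidefiniteness inequality but to the \emph{equality} that $K(\mathbf M)=P^{-T}A_0P^{-1}$ be block-diagonal on $\mathcal E$. Writing $p_k:=\sum_i\omega_i\hat\lambda_i^k$, this block-diagonality unravels into an overdetermined algebraic system for $(p_0,\dots,p_{2n})$. The $\omega_i$ from Proposition~\ref{prop:hqrecons2} give $p_k=\Delta_k$ for all $k\le 2n$; the paper shows that the system is solved instead by
\[
p_k=\Delta_k\ (k\le 2n-1),\qquad p_{2n}=\Delta_{2n}+(n-1)!,
\]
so the required $\omega_i$ differ from those of Proposition~\ref{prop:hqrecons2} by a specific perturbation in the top moment. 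Positivity of these new $\omega_i$ is then checked by a separate argument mimicking the proof of Proposition~\ref{prop:hqrecons2}. Your proposal, by fixing $D$ in advance and then trying to verify an inequality, would lead you to a computation that simply fails, with no mechanism to locate the correct $D$.

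A second, related gap: you are missing the reduction step that makes the overdetermined system tractable. The raw block-diagonality condition is $3(N-2)$ equations in the $N+1$ unknowns $\omega_i$; the paper shows (using homogeneity, affine invariance, and the fact that the HyQMOM closure returns $\Delta_{2n+1}$ at the standard-normal equilibrium) that these collapse to $N$ equations $\mathcal A_0^\beta=0$, $\beta=0,\dots,N-1$. Only after this reduction does the explicit solution above emerge from a lengthy computation with the Hermite orthogonal polynomials $\hat Q_k$ and the factorization $\hat F=\hat Q_n\hat R_{n+1}$. Without this reduction, the system looks hopelessly overdetermined and one would not guess the form of $p_{2n}$.
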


Let us remark that Theorem \ref{thm:stab} is proved only for the closure (\ref{eq:hyqmom}). The proof relies on the following property of the HyQMOM closure (\ref{eq:hyqmom}). To state this property, we refer to \cite{FoxLau2022} and introduce
\begin{definition}
A moment closure
\[
  \mathcal M: \mathbf M=(M_0,\dots,M_N)\in\mathbb R^{N+1} \mapsto (\mathbf M,M_{N+1})\in\mathbb R^{N+2}
\]
is referred to as affine invariant if it satisfies
\begin{equation} \label{eq:affinv}
  \mathcal M \circ \mathcal S_N^{[u,\sigma]} = \mathcal S_{N+1}^{[u,\sigma]} \circ \mathcal M
\end{equation}
for any $u\in\mathbb R, \ \sigma>0$ and a family of parametrized linear operators defined as
\begin{equation} \label{eq:Sk}
  \mathcal S^{[u,\sigma]}_k: (M_0,\dots,M_k)\in\mathbb R^{k+1} \mapsto
  (\bar{M}_0,\dots,\bar{M}_k)\in\mathbb R^{k+1}
\end{equation}
with
\[
   \bar{M}_k = \sum_{j=0}^k \binom{k}{j} \sigma^j M_j u^{k-j}.
\]
\end{definition}

The affine invariance means that if a moment closure is achieved through a distribution reconstruction $f(\xi;\mathbf M)$ parametrized with moments $\mathbf M$, then the closure for $M_{N+1}$ is invariant after the distribution is shifted and rescaled as
\begin{equation} \label{eq:scalef}
  f(\xi;\mathcal S_N^{[u,\sigma]}(\mathbf M)) = \frac{1}{\sigma} f\left(\frac{\xi-u}{\sigma}; \mathbf M\right)
\end{equation}
in the sense of moments up to order $N+1$. In view of this, we believe that the affine invariance is a natural requirement for moment closure methods.

For the HyQMOM closure (\ref{eq:ghyq}), we have
\begin{proposition} \label{prop:hyqafinv}
  The generalized HyQMOM (\ref{eq:ghyq}) is affine invariant if and only if $\gamma = 1$, which is just the closure (\ref{eq:hyqmom}).
\end{proposition}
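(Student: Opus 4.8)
The plan is to express the action of $\mathcal S_N^{[u,\sigma]}$ directly on the recursion coefficients $(a_0,\dots,a_{n-1},b_0,\dots,b_n)$ that parametrize $\Omega_{2n}$, and then compare both sides of the affine-invariance identity~\eqref{eq:affinv} on the single scalar degree of freedom $a_n$ (equivalently $M_{N+1}$). First I would show that the binomial rescaling $\bar M_k = \sum_j \binom{k}{j}\sigma^j M_j u^{k-j}$ is exactly the moment sequence obtained by pushing the underlying functional forward under the affine map $X \mapsto \sigma X + u$; concretely, $\langle p \rangle_{\mathcal S_N^{[u,\sigma]}(\mathbf M)} = \langle p(\sigma X + u)\rangle_{\mathbf M}$ for all $p\in\mathbb R[X]_{N+1}$. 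From this it follows that the monic orthogonal polynomials transform as $\bar Q_k(X) = \sigma^k Q_k\!\big((X-u)/\sigma\big)$, and feeding this into the three-term recurrence~\eqref{eq:qrec} gives the clean transformation law $\bar a_k = \sigma a_k + u$ and $\bar b_k = \sigma^2 b_k$. (These are standard facts about orthogonal polynomials under affine change of variable; I would state them as a short lemma with a one-line proof via uniqueness of monic orthogonal polynomials.)

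Next I would unwind what~\eqref{eq:affinv} demands. The left-hand side $\mathcal M \circ \mathcal S_N^{[u,\sigma]}$ applies $\mathcal S_N^{[u,\sigma]}$ to $\mathbf M$, reads off the transformed coefficients $\bar a_0,\dots,\bar a_{n-1}$, and then sets $\bar a_n^{\mathrm{LHS}} = \tfrac{\gamma}{n}\sum_{k=0}^{n-1}\bar a_k = \tfrac{\gamma}{n}\sum_{k=0}^{n-1}(\sigma a_k + u) = \sigma\Big(\tfrac{\gamma}{n}\sum_{k=0}^{n-1}a_k\Big) + \gamma u$. The right-hand side $\mathcal S_{N+1}^{[u,\sigma]}\circ\mathcal M$ first closes at the original moments, producing $a_n = \tfrac{\gamma}{n}\sum_{k=0}^{n-1}a_k$, and then applies the affine transformation at level $N+1$, which by the transformation law sends this $a_n$ to $\bar a_n^{\mathrm{RHS}} = \sigma a_n + u = \sigma\Big(\tfrac{\gamma}{n}\sum_{k=0}^{n-1}a_k\Big) + u$. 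Here I should note that one must check the two closures agree on \emph{all} the lower data as well, but that is automatic: both sides produce the same $\bar a_0,\dots,\bar a_{n-1},\bar b_0,\dots,\bar b_n$ because those are determined by $\mathcal S_{N+1}^{[u,\sigma]}(\mathbf M)$ alone and the closure does not touch them; so the whole content of~\eqref{eq:affinv} reduces to the single equation $\bar a_n^{\mathrm{LHS}} = \bar a_n^{\mathrm{RHS}}$, i.e. $\gamma u = u$ for all $u\in\mathbb R$, which holds precisely when $\gamma = 1$.

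Finally I would package the two directions: if $\gamma = 1$ the computation above shows both sides of~\eqref{eq:affinv} coincide for every $u,\sigma$, giving affine invariance; conversely, taking $\sigma = 1$ and any $u \neq 0$ forces $\gamma = 1$, and $\gamma=1$ is exactly closure~\eqref{eq:hyqmom}. I do not expect a genuine obstacle here — the only point requiring a little care is the bookkeeping that the bijection of Proposition~\ref{prop:bij1} is compatible with restriction (so that "the $a_k$ of $\tilde{\mathbf M}$ for $k<n$ depend only on $\mathbf M$, and the closure fixes only $a_n$"), together with cleanly establishing the affine transformation law $\bar a_k = \sigma a_k + u$, $\bar b_k = \sigma^2 b_k$. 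Once that lemma is in place the proposition is a two-line comparison.
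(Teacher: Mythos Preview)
Your proposal is correct and follows essentially the same route as the paper: establish the affine transformation law $\bar a_k=\sigma a_k+u$, $\bar b_k=\sigma^2 b_k$ for the three-term recurrence coefficients, then compare the two sides of~\eqref{eq:affinv} on the single closed coefficient $a_n$. The paper arrives at the same transformation law via the quadrature reconstruction of Proposition~\ref{prop:qmomrecons} (writing $\bar M_k=\sum_i w_i(\sigma u_i+u)^k$ and checking orthogonality of the rescaled $Q_k$ against this discrete measure), and at the end phrases the comparison as a root-matching condition on $\bar Q_{n+1}$ rather than directly equating $a_n$-values; your use of the functional identity $\langle p\rangle_{\bar{\mathbf M}}=\langle p(\sigma X+u)\rangle_{\mathbf M}$ and the bijection of Proposition~\ref{prop:bij2} is a slightly more direct packaging of the same argument.
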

This property of the closure (\ref{eq:hyqmom}) is crucial for the proof of structural stability, as is seen in section \ref{sec:stab}. In addition, we have (with a proof in Appendix)
\begin{proposition} \label{prop:afinvold}
  Both the QMOM closure and EQMOM closure with kernels (\ref{eq:eqker}) are affine invariant.
\end{proposition}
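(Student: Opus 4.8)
The plan is to verify the defining identity \eqref{eq:affinv} separately for the QMOM closure and for the EQMOM closure, exploiting in both cases the fact that the closure is produced by a distributional reconstruction $f(\xi;\mathbf M)$ and that the operator $\mathcal S_k^{[u,\sigma]}$ is exactly the map on moments induced by the shift-and-rescale \eqref{eq:scalef}. Indeed, a direct computation shows that if $g(\xi) = \tfrac1\sigma f\left(\tfrac{\xi-u}{\sigma}\right)$ and $M_j = \int \xi^j f\,d\xi$, $\bar M_j = \int \xi^j g\,d\xi$, then the change of variables $\eta = (\xi-u)/\sigma$ and the binomial expansion of $(\sigma\eta + u)^k$ give precisely $\bar M_k = \sum_{j=0}^k \binom{k}{j}\sigma^j M_j u^{k-j}$, i.e. $\bar{\mathbf M} = \mathcal S_k^{[u,\sigma]}(\mathbf M)$. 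So it suffices to check, in each case, that the reconstruction itself is \emph{equivariant}: reconstructing from the transformed moments $\mathcal S_N^{[u,\sigma]}(\mathbf M)$ yields exactly the shifted-rescaled reconstruction \eqref{eq:scalef}, because then the closed moment $M_{N+1}$ transforms by $\mathcal S_{N+1}^{[u,\sigma]}$ automatically.

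First I would treat the QMOM. By Proposition \ref{prop:qmomrecons}, $\mathbf M' = (M_0,\dots,M_{2n-1}) \in \Omega_{2n-1}$ corresponds bijectively to a quadrature $(w_i,u_i)_{i=1}^n$ with $M_j = \sum_i w_i u_i^j$, and the closure sets $M_{2n} = \sum_i w_i u_i^{2n}$. I would show that the quadrature associated to $\mathcal S_{2n-1}^{[u,\sigma]}(\mathbf M')$ is $(w_i, \sigma u_i + u)_{i=1}^n$: the nodes $\sigma u_i + u$ are still strictly increasing, the weights are still positive, and $\sum_i w_i(\sigma u_i + u)^j = \sum_{\ell=0}^j \binom{j}{\ell}\sigma^\ell u^{j-\ell} \sum_i w_i u_i^\ell = \sum_\ell \binom{j}{\ell}\sigma^\ell M_\ell u^{j-\ell} = (\mathcal S^{[u,\sigma]}\mathbf M')_j$ for $j = 0,\dots,2n-1$; by the uniqueness in Proposition \ref{prop:qmomrecons} this \emph{is} the quadrature. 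Then the closed moment for the transformed data is $\sum_i w_i(\sigma u_i + u)^{2n} = (\mathcal S_{2n}^{[u,\sigma]}\tilde{\mathbf M})_{2n}$ where $\tilde{\mathbf M} = (\mathbf M', M_{2n})$, which is exactly \eqref{eq:affinv}.

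Next I would treat the EQMOM. Here the reconstruction is $f(\xi) = \sum_i w_i \tfrac1\eta \calk\!\left(\tfrac{\xi-\xi_i}{\eta}\right)$ with parameters $(w_i,\xi_i,\eta)_{i=1}^n$, and the closure reads off $M_{2n+1}$ from these parameters. I would show that if $(w_i,\xi_i,\eta)$ reconstructs $\mathbf M$, then $(w_i,\,\sigma\xi_i + u,\,\sigma\eta)$ reconstructs $\mathcal S_{2n}^{[u,\sigma]}(\mathbf M)$: the shifted-rescaled density $\tfrac1\sigma f\!\left(\tfrac{\xi-u}{\sigma}\right)$ is again a sum of $n$ kernels of the same type, now with width $\sigma\eta$, centers $\sigma\xi_i + u$, and unchanged weights, and its moments are $\mathcal S_{2n}^{[u,\sigma]}(\mathbf M)$ by the change-of-variables computation above; invertibility of the moment map on the EQMOM parameter set then forces these to be the parameters selected by the EQMOM procedure. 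Hence the closed moment transforms by $\mathcal S_{2n+1}^{[u,\sigma]}$, establishing \eqref{eq:affinv}.

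The only genuinely delicate point is the use of \emph{uniqueness/invertibility} of the reconstruction map in each case — for QMOM this is Proposition \ref{prop:qmomrecons} and is clean, but for EQMOM the moment-to-parameter map is only invertible on its image (a proper subset of $\Omega_{2n}$), so I would need to remark that the transformation $(w_i,\xi_i,\eta)\mapsto(w_i,\sigma\xi_i+u,\sigma\eta)$ maps this image into itself (equivalently, the EQMOM domain is invariant under $\mathcal S_{2n}^{[u,\sigma]}$), which follows because applying the affine map and its inverse are both realized by shift-rescalings of admissible densities. With that observation in hand the rest is the routine binomial bookkeeping sketched above, and I would relegate the full computation of $\bar M_k$ to the appendix as promised.
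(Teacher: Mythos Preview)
Your proposal is correct and follows essentially the same route as the paper: for QMOM you identify the transformed quadrature $(w_i,\sigma u_i+u)$ via the binomial expansion and uniqueness in Proposition~\ref{prop:qmomrecons}, and for EQMOM you show the reconstruction is equivariant under shift--rescale and invoke invertibility of the parameter-to-moment map. Your explicit remark that the EQMOM domain is invariant under $\mathcal S_{2n}^{[u,\sigma]}$ is a point the paper leaves implicit, so if anything your write-up is slightly more careful on that front.
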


As an interesting application of the reconstruction based on Proposition \ref{prop:qmomrecons}:
\[
f(\xi;\mathbf M)=\sum_{i=0}^{n}w_i\delta(\xi-u_i), \qquad\mathbf M\in\Omega_{2n}
\]
or the reconstruction in Proposition \ref{prop:hqrecons2},
we analyze the realizability of the first-order upwind scheme \cite{Fox2018} for the moment system (\ref{eq:unc1}):
\begin{equation} \label{eq:upwind}
  \mathbf M^{p+1}_j = \mathbf M^p_j + \frac{\Delta t}{\Delta x_j} \left( \mathcal F^p_{j-\frac{1}{2}} - \mathcal F^p_{j+\frac{1}{2}} \right) +
  \frac{\Delta t}{\tau} \left(\rho \bm \Delta^p_j - \mathbf M_j^{p+1} \right).
\end{equation}
Here $\bm \Delta = (\Delta_0(U,\theta), \dots, \Delta_{2n}(U,\theta))$ and $\mathcal F_{j\pm\frac{1}{2}} = (\mathcal F_0,\dots,\mathcal F_{2n})_{j\pm\frac{1}{2}}$ is the spatial flux.
The subscript $j$ represents the $j$th cell $]x_{j-\frac{1}{2}}, x_{j+\frac{1}{2}}[$ of size $\Delta x_j$, and the superscript $p$ denotes the time $t_p = p\Delta t$.
With the reconstructed distribution, the `kinetic-based' flux can be taken as
\begin{equation} \label{eq:kbflux1}
  \mathcal F_{k,j+\frac{1}{2}} = \sum_{i=0}^{n} \left. w_i \max \{ 0,u_i \}^{k+1} \right |_j + \sum_{i=0}^{n} \left. w_i \min\{ 0,u_i \}^{k+1} \right |_{j+1}
\end{equation}or
\begin{equation} \label{eq:kbflux2}
  \mathcal F_{k,j+\frac{1}{2}} = \sum_{i=0}^{2n} \left. \omega_i \max \{ 0,\lambda_i \}^{k+1} \right |_j + \sum_{i=0}^{2n} \left. \omega_i \min\{ 0,\lambda_i \}^{k+1} \right |_{j+1}.
\end{equation}
For these two schemes, we have

\begin{theorem} [Realizability preserving] \label{thm:bp}
  The scheme (\ref{eq:upwind}) with flux (\ref{eq:kbflux1}) or (\ref{eq:kbflux2}) is realizable under the CFL condition
  \[
    \frac{\Delta t}{\Delta x_j} \max_i |u_{i,j}| \le 1 \quad
    \text{or} \quad
    \frac{\Delta t}{\Delta x_j} \max_i |\lambda_{i,j}| \le 1,
    \quad \forall j.
  \]
\end{theorem}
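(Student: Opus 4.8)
The plan is to exploit the kinetic-based structure of the flux and the realizability characterization via nonnegative weighted sums of powers. The key observation is that both flux formulas are obtained by splitting a discrete measure according to the sign of its abscissas, so each scheme update can be rewritten as a convex-type combination of moment vectors that are individually realizable. Concretely, for the flux \eqref{eq:kbflux1} I would introduce, for each cell $j$, the ``upwind'' and ``downwind'' split measures $f_j^{\pm}(\xi) = \sum_{i=0}^n w_{i,j}\,\mathbf 1_{\{\pm u_{i,j}>0\}}\,\delta(\xi - u_{i,j})$ together with the zero-velocity part, and note that $\mathcal F_{k,j+1/2} = \int \xi\,\xi^k\, f_j^{+}(\xi)\,d\xi + \int \xi\,\xi^k\, f_{j+1}^{-}(\xi)\,d\xi$, i.e. the numerical flux is exactly the kinetic flux $\int \xi^{k+1} g\,d\xi$ of the reconstructed distribution $g = f_j^{+} + f_{j+1}^{-}$.

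The second step is to rewrite the scheme \eqref{eq:upwind}, which is implicit in the source term, in solved form:
\[
  \left(1 + \tfrac{\Delta t}{\tau}\right)\mathbf M_j^{p+1}
  = \mathbf M_j^p + \tfrac{\Delta t}{\Delta x_j}\bigl(\mathcal F_{j-\frac12}^p - \mathcal F_{j+\frac12}^p\bigr)
  + \tfrac{\Delta t}{\tau}\,\rho_j^p\,\bm\Delta_j^p .
\]
Since $\Omega_{2n}$ is a positive cone (stated in Section \ref{subsec:orthpoly}) and the dividing factor $1+\Delta t/\tau>0$ preserves it, it suffices to show that the right-hand side lies in $\overline{\Omega_{2n}}$ and is not on the boundary; equivalently, that it is a sum of moment vectors of nonnegative measures, at least one of which has a strictly positive-definite Hankel matrix. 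The term $\rho_j^p\bm\Delta_j^p$ is the moment vector of $\rho_j^p\,\phi_{\sqrt{\theta_j^p}}(\xi - U_j^p)$, a genuine Gaussian with $\theta_j^p>0$ (because $\mathbf M_j^p\in\Omega_{2n}$ forces $b_1>0$, i.e. positive temperature), hence strictly realizable; so whenever $\Delta t/\tau>0$ this single term already guarantees strict realizability, and it remains only to check that the convective part contributes a nonnegative measure.

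The third step is the CFL estimate. Writing $\mathbf M_j^p$ itself as the moment vector of the reconstruction $f(\xi;\mathbf M_j^p)=\sum_i w_{i,j}\delta(\xi-u_{i,j})$ from Proposition \ref{prop:qmomrecons}, the convective part of the right-hand side becomes the moment vector of
\[
  f(\xi;\mathbf M_j^p) - \tfrac{\Delta t}{\Delta x_j}\,\xi\,f_j^{+}(\xi)
  + \tfrac{\Delta t}{\Delta x_j}\,\xi\,f_{j-1}^{+}(\xi)
  + \tfrac{\Delta t}{\Delta x_j}\,\xi\,f_j^{-}(\xi)
  - \tfrac{\Delta t}{\Delta x_j}\,\xi\,f_{j}^{-}(\xi),
\]
and grouping terms cell-by-cell one sees that the coefficient multiplying each Dirac $\delta(\xi-u_{i,j})$ is $w_{i,j}\bigl(1 - \tfrac{\Delta t}{\Delta x_j}|u_{i,j}|\bigr)\ge 0$ under the stated CFL condition, while the inflow contributions from neighbouring cells carry coefficients $\pm\tfrac{\Delta t}{\Delta x_j}u_{i,j\mp1}$ that are nonnegative by the sign restriction built into $f^{\pm}$. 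Thus the convective part is the moment vector of a nonnegative measure, and adding the strictly realizable Gaussian part (or, if $\tau=\infty$, arguing that a generic Dirac configuration with the dividing factor still yields a positive-definite Hankel matrix) gives $\mathbf M_j^{p+1}\in\Omega_{2n}$. The argument for \eqref{eq:kbflux2} is identical, using the reconstruction $M_k=\sum_i\omega_i\lambda_i^k$ from Proposition \ref{prop:hqrecons2} in place of Proposition \ref{prop:qmomrecons} and $|\lambda_{i,j}|$ in the CFL condition. The main obstacle I anticipate is the borderline case: ensuring strict (not merely weak) realizability when $\tau=\infty$ and some CFL inequalities are tight, so that the combined measure could a priori collapse onto fewer than $n+1$ (resp. $2n+1$) distinct support points; handling this requires checking that at least $n+1$ distinct abscissas survive with positive weight, which follows because the interior cell contributes $n+1$ distinct points $u_{i,j}$ with coefficients that vanish only if every $|u_{i,j}|=\Delta x_j/\Delta t$, an event excluded since the $u_{i,j}$ are distinct and cannot all equal $\pm\Delta x_j/\Delta t$ when $n\ge 1$.
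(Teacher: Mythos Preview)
Your approach is essentially identical to the paper's: solve the scheme for $(1+\Delta t/\tau)\mathbf M_j^{p+1}$, express the convective part as the moment vector of a discrete measure whose weights in cell $j$ are $w_{i,j}(1-\tfrac{\Delta t}{\Delta x_j}|u_{i,j}|)\ge 0$ under the CFL condition while the neighbor contributions are nonnegative by the sign splitting (the last term in your displayed expression should read $-\tfrac{\Delta t}{\Delta x_j}\,\xi\,f_{j+1}^{-}$, a harmless slip), and then invoke the positive-cone structure of $\Omega_{2n}$. Your treatment of strict versus weak realizability in the borderline case $\tau=\infty$ is actually more careful than the paper's, which simply asserts $\mathbf M_j^{p+1}\in\Omega_{2n}$ without discussing the boundary.
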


\begin{proof}
We only consider the flux (\ref{eq:kbflux1}) because the same arguments work for (\ref{eq:kbflux2}).
  A straightforward calculation gives
  \[
  \begin{aligned}
    \mathcal F_{k,j-\frac{1}{2}} - \mathcal F_{k,j+\frac{1}{2}} =& \sum_{i=0}^{n} \left. w_i \max\{0,u_i\}^{k+1} \right|_{j-1} - \sum_{i=0}^{n} \left. w_i \min\{ 0,u_i \}^{k+1} \right |_{j+1} \\
    &+ \sum_{i=0}^{n} \left. w_i \left[ \min\{ 0,u_i \}^{k+1} - \max \{ 0,u_i \}^{k+1} \right] \right|_j \\
    =& \sum_{i=0}^{n} w_i |u_i| \max\{ 0,u_i \}^k \Big|_{j-1}
    + \sum_{i=0}^{n} w_i |u_i| \min\{ 0,u_i \}^k \Big|_{j+1}\\
&- \sum_{i=0}^{n} w_i |u_i| u_i^k \Big|_j.
  \end{aligned}
  \]
  Notice that the first two terms, denoted by $T$, are in $\Omega_{2n}$. Then we see from (\ref{eq:upwind}) that
  \[
    \left(1+\frac{\Delta t}{\tau}\right) M_{k,j}^{p+1} = \frac{\Delta t}{\Delta x_j} T^p + \sum_{i=0}^{n} w_i \left( 1-\frac{\Delta t}{\Delta x_j} |u_i| \right) u_i^k \Big|_j^p
    +\frac{\Delta t}{\tau}\rho\Delta_k(U,\theta) \big|_j^p
  \]
  for $k=0,\dots,2n$.
  Clearly, the positivity of $( 1-\Delta t |u_i| / \Delta x_j )$ ensures $\mathbf M_j^{p+1} \in \Omega_{2n}$ (the positive cone). This completes the proof.
\end{proof}

The following sections are devoted to the proofs of the other conclusions above. Precisely, section \ref{sec:hyp} is for Theorem \ref{thm:hyp} and Proposition \ref{prop:hqrecons2}, section \ref{sec:stab} for Theorem \ref{thm:stab} and Proposition \ref{prop:hyqafinv} on the affine invariance, and an appendix for Proposition \ref{prop:afinvold}.

\section{Hyperbolicity} \label{sec:hyp}

We prove Theorem \ref{thm:hyp} and Proposition \ref{prop:hqrecons2} in this section. The key is the following proposition.

\begin{proposition} \label{prop:hypf}
  For the HyQMOM closure (\ref{eq:ghyq}), the characteristic polynomial of the corresponding coefficient matrix $A(M)$ can be factorized as
  \[
    F(X;M)=Q_n \cdot \left[ (X-a_n)Q_n - \frac{2n+\gamma}{n} b_n Q_{n-1} \right]:=Q_nR_{n+1}.
  \]
\end{proposition}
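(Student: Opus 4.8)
The plan is to compute the characteristic polynomial $F(X;M) = \det(XI - A(M))$ by exploiting the near-Hankel structure of the coefficient matrix $A(M) = \partial \mathcal F/\partial \mathbf M$. Recall that $\mathcal F(\mathbf M) = (M_1,\dots,M_{2n}, M_{2n+1}(\mathbf M))$, so all rows of $A$ except the last are trivial shift rows: $\partial M_{k}/\partial M_j = \delta_{j,k}$ for $k = 1,\dots,2n$. Hence $A$ is a companion-type matrix whose only nontrivial row is the bottom row $\partial M_{2n+1}/\partial \mathbf M$. The first step is therefore to express $M_{2n+1}$, via the closure \eqref{eq:ghyq}, as an explicit (rational, ultimately polynomial-in-the-right-variables) function of $\mathbf M$ using the recursion \eqref{eq:qrec}--\eqref{eq:ab}: since $a_n$ is linear in $M_{2n+1}$ and $a_n = \tfrac{\gamma}{n}\sum_{k=0}^{n-1} a_k$ with the $a_k$ ($k\le n-1$) and all $b_k$ ($k\le n$) already determined by $\mathbf M$, this pins down $M_{2n+1}(\mathbf M)$.

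The main idea is then a change of basis: instead of working in the monomial coordinates $M_0,\dots,M_{2n}$, work with the coordinates $(a_0,\dots,a_{n-1},b_0,\dots,b_n)$ provided by the bijection of Proposition \ref{prop:bij1}, or more precisely use the Jacobi-matrix picture. The tridiagonal Jacobi matrix $J_{n+1}$ with diagonal $(a_0,\dots,a_n)$ and off-diagonal entries $\sqrt{b_1},\dots,\sqrt{b_n}$ has characteristic polynomial $Q_{n+1}$, and after imposing the closure $a_n = \tfrac{\gamma}{n}\sum_{k<n} a_k$ this becomes exactly $R_{n+1} = (X-a_n)Q_n - \tfrac{2n+\gamma}{n} b_n Q_{n-1}$ — wait, one must be careful: the naive Jacobi characteristic polynomial gives $(X-a_n)Q_n - b_n Q_{n-1}$, and the extra factor $\tfrac{2n+\gamma}{n}$ in front of $b_n$ is precisely the signature of differentiating the closure, since $a_n$ is not constant but depends on $\mathbf M$ through the other moments. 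So the key computation is to show that the bottom row of $A$, when transported through the coordinate change, contributes both the "frozen" Jacobi row and a correction coming from $\partial a_n/\partial M_j$, and that this correction conspires to replace $b_n$ by $\tfrac{2n+\gamma}{n} b_n$ while leaving the $Q_n$ factor intact. I would carry this out by directly computing $F(X;M) = \det(XI - A)$ via expansion along the structured rows, reducing it to a $2\times 2$-type determinant in the $Q_n, Q_{n+1}$ (equivalently $Q_n, R_{n+1}$) block, using the standard three-term-recurrence determinant identities for tridiagonal matrices (e.g.\ that the principal minors of $XI - J_k$ are the $Q_k$).

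A cleaner alternative I would try first: use Proposition \ref{prop:qmomrecons}-type reasoning in reverse. The eigenvalues of $A$ for a quadrature closure are typically the nodes of an associated quadrature; here the factorization claims they are the roots of $Q_n$ (the "interior" $n$ nodes, shared with the $N=2n-1$ QMOM substructure) together with the $n+1$ roots of $R_{n+1}$. So I would verify the factorization by checking that (i) $\deg F = 2n+1 = n + (n+1)$ with correct leading coefficient, and (ii) every root of $Q_n$ and every root of $R_{n+1}$ is an eigenvalue of $A$, by exhibiting explicit eigenvectors — the eigenvector for a root $\lambda$ should be the vector of "orthogonal polynomial evaluations" $(P_0(\lambda),\dots,P_{2n}(\lambda))$ or the moment-type vector $(1,\lambda,\lambda^2,\dots)$, and one checks $A v = \lambda v$ using that the last component closes correctly exactly when $Q_n(\lambda)=0$ or $R_{n+1}(\lambda)=0$. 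Since these $2n+1$ roots are distinct (interlacing from Proposition \ref{prop:inter}, plus $R_{n+1}$'s roots interlace $Q_n$'s and are distinct from them — this also gives Theorem \ref{thm:hyp}), counting multiplicities forces $F = Q_n R_{n+1}$ up to scalar, and the scalar is $1$ by comparing leading terms.

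\textbf{Main obstacle.} The delicate point is the bookkeeping of the derivative $\partial a_n/\partial \mathbf M$ and showing it produces exactly the clean factor $\tfrac{2n+\gamma}{n} b_n$ — i.e.\ that all the messy chain-rule contributions from $a_n = \tfrac{\gamma}{n}\sum_{k<n} a_k(\mathbf M)$ collapse. Concretely, in the eigenvector approach this is the verification that the last equation $\lambda^{2n+1} = M_{2n+1}(\mathbf M) + (\text{derivative terms}) \cdot(\text{components of }v)$ holds iff $\lambda$ is a root of $Q_n$ or of $R_{n+1}$, and the $\gamma$-dependence must land in the coefficient of $b_n Q_{n-1}$ and nowhere else. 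I expect the homogeneity/scaling structure of the $a_k, b_k$ (degree considerations in the cone $\Omega_{2n}$) to be the tool that makes this cancellation transparent, and I would set up the computation to make that structure visible rather than expanding determinants blindly.
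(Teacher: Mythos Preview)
Your proposal correctly identifies that $A(\mathbf M)$ is a companion matrix, so that $F(X;\mathbf M)=X^{2n+1}-\sum_{j}\frac{\partial M_{2n+1}}{\partial M_j}X^j$ and the eigenvector candidate for $\lambda$ is $(1,\lambda,\dots,\lambda^{2n})$; you also correctly isolate the real difficulty, namely the chain-rule contributions from $a_n=\tfrac{\gamma}{n}\sum_{k<n}a_k(\mathbf M)$. But neither of your two routes actually resolves that difficulty. In the eigenvector approach, checking $Av=\lambda v$ for a root $\lambda$ of $Q_n$ or $R_{n+1}$ is \emph{literally} the equation $F(\lambda)=0$, so nothing is gained unless you can evaluate $\sum_j\frac{\partial M_{2n+1}}{\partial M_j}\lambda^j$ without first computing $F$; you offer no mechanism for this. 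In the Jacobi-matrix approach, the matrix $A$ is $(2n+1)\times(2n+1)$ while the Jacobi matrix is $(n+1)\times(n+1)$, and you do not explain what change of basis would relate them or produce the factor $Q_n$. The closing appeal to ``homogeneity/scaling'' is too vague to substitute for the missing computation.

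The paper circumvents the obstacle by a device you do not have: rather than compute $F$ and compare, it \emph{guesses} $G=Q_nR_{n+1}$ and invokes a general criterion (their Proposition~\ref{prop:clos}) saying that if the closure is written as $M_{2n+1}=\langle X^{2n+1}-G\rangle_{\mathbf M}$, then $F=G$ iff $\langle\partial_{\mathbf M}G\rangle_{\mathbf M}=0$. Two short observations then make $\langle\partial_{\mathbf M}G\rangle=0$ a three-line orthogonality computation. First, since $\langle Q_nQ_{n-1}\rangle=0$, one may freely add $-c_nQ_nQ_{n-1}$ to $(X-a_n)Q_n^2$ without changing the closure, so $G=Q_nR_{n+1}$ does represent HyQMOM. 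Second --- and this is the step that makes the $\gamma$-bookkeeping collapse --- writing $Q_n=X^n-\sigma X^{n-1}+(\text{lower})$, the recursion forces $\sigma=\sum_{k=0}^{n-1}a_k$, so the closure is $a_n=\tfrac{\gamma}{n}\sigma$ and every occurrence of $\partial_{\mathbf M}a_k$ in $\langle\partial_{\mathbf M}G\rangle$ funnels through the single quantity $\partial_{\mathbf M}\sigma$. Orthogonality then reduces $\langle\partial_{\mathbf M}G\rangle$ to $\bigl(-2+\tfrac{c_n}{b_n}-\tfrac{\gamma}{n}\bigr)\langle Q_n^2\rangle\,\partial_{\mathbf M}\sigma$, which vanishes precisely for $c_n=\tfrac{2n+\gamma}{n}b_n$. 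That is where the coefficient you were worried about comes from; your proposal never reaches the identity $\sigma=\sum_k a_k$ that makes it transparent.
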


Let us mention that this proposition was proved in \cite{FoxLau2022} only for the closure (\ref{eq:hyqmom}) with $n\le 9$ by the symbolic software. Here we will present a purely analytical proof at the end of this section.

Having this proposition, we can prove Theorem \ref{thm:hyp} and Proposition \ref{prop:hqrecons2} as follows.
First of all, it is not difficult to deduce from Proposition \ref{prop:inter} that $R_{n+1}$ has $n+1$ distinct real roots which are separated by those of $Q_n$ (see the proof of Theorem 4.8 in \cite{FoxLau2022}, where the arguments work for $\gamma>-2n$). Thus, the characteristic polynomial $F(X;\mathbf M)$ has $2n+1$ distinct real roots and the strict hyperbolicity follows. This proves Theorem \ref{thm:hyp}.

For Proposition \ref{prop:hqrecons2}, we have
\begin{proof}[Proof of Proposition \ref{prop:hqrecons2}]
  As a solution to the linear system of algebraic equations with Vandermone coefficients, the uniqueness and existence of such $\omega_i$'s are obvious. Thus, we only need to show the positivity.
  For this purpose, we write $\tilde{\mathbf M} = (\mathbf M,M_{2n+1})$, $\mathbf M=(\mathbf M',M_{2n})$ and $\mathbf M'=(M_0,\dots,M_{2n-1})$.
  It is clear that the restriction of $\langle \cdot \rangle_{\tilde{\mathbf M}}$ on $\mathbb R[X]_{2n}$ (resp. $\mathbb R[X]_{2n-1}$) is just $\langle \cdot \rangle_{\mathbf M}$ (resp. $\langle \cdot \rangle_{\mathbf M'}$).

  Then we recall that $R_{n+1}$ has $n+1$ distinct roots separated by those of $Q_n$. Denote by $\{\lambda_{2i}\}_{i=0}^n$ the roots of $R_{n+1}$ and by $\{\lambda_{2i-1}\}_{i=1}^n$ the roots of $Q_n$. Thus, it follows from Proposition \ref{prop:qmomrecons} that there exist $\{w_i'>0\}_{i=1}^{n}$ such that
  \begin{equation} \label{eq:wip}
    \sum_{i=1}^n w_i' \lambda_{2i-1}^k = \left\{
    \begin{aligned}
      &M_k,             \quad &&k=0,\dots,2n-1, \\
      &M_{2n} - \langle Q_n^2 \rangle_\mathbf M, \quad &&k=2n.
    \end{aligned}
    \right.
  \end{equation}
  Here the equality for $k=2n$ is due to the relation (\ref{eq:pMbrac}) leading to
  \[
    0 = \sum_{i=1}^n w_i' Q_n^2(\lambda_{2i-1})= \sum_{i=1}^n w_i' \lambda_{2i-1}^{2n} + \langle Q_n^2 - X^{2n} \rangle_{\mathbf M'}.
  \]

  On the other hand, it is not difficult to verify that $Q_0,\dots,Q_n,R_{n+1}=(X-a_n)Q_n - \frac{2n+\gamma}{n} b_n Q_{n-1}$ are orthogonal polynomials based on the inner product induced by the realizable moments $\tilde{\mathbf M}'' = (\mathbf M', M_{2n}'', M_{2n+1}'')$ with $M_{2n}''=M_{2n} + \frac{n+\gamma}{n} \langle Q_n^2 \rangle_\mathbf M$. Again by Proposition \ref{prop:qmomrecons}, there exist $\{w_i''>0\}_{i=0}^{n}$ such that
  \begin{equation} \label{eq:wipp}
    \sum_{i=0}^n w_i'' \lambda_{2i}^k = \left\{
    \begin{aligned}
      &M_k,             \quad &&k=0,\dots,2n-1, \\
      &M_{2n} + \frac{n+\gamma}{n} \langle Q_n^2 \rangle_\mathbf M, \quad &&k=2n.
    \end{aligned}
    \right.
  \end{equation}

  With (\ref{eq:wip}) and (\ref{eq:wipp}), it is straightforward to see that
  \[
    \sum_{i=1}^n \frac{n+\gamma}{2n+\gamma} w_i' \lambda_{2i-1}^k + \sum_{i=0}^n \frac{n}{2n+\gamma} w_i'' \lambda_{2i}^k = M_k
  \]
  for $k=0,\dots,2n$. This together with the uniqueness of $\{\omega_i\}_{i=0}^{2n}$ clearly implies that
  \[
    \omega_{2i-1} = \frac{n+\gamma}{2n+\gamma} w_i' \quad \text{and} \quad
    \omega_{2i} = \frac{n}{2n+\gamma} w_i''
  \]
  are both positive if $n+\gamma>0$. This completes the proof.
\end{proof}

Now we turn to the proof of Proposition \ref{prop:hypf}.
Instead of computing the characteristic polynomial as in \cite{FoxLau2022}, we develop a polynomial-based closure method.
Given a monic polynomial with coefficients depending on the known moment vector $\mathbf M$:
\[
  G=G(X;\mathbf M)=\sum_{k=0}^N g_k(\mathbf M)X^k + X^{N+1},
\]
we can define a moment closure as
\begin{equation} \label{eq:clos}
  M_{N+1} = \langle X^{N+1}-G \rangle_{\mathbf M} = - \sum_{k=0}^N g_k(\mathbf M) M_k.
\end{equation}
From this perspective, the HyQMOM can be written as
\[
M_{2n+1}=\langle X^{2n+1}-(X-a_n)Q_n^2\rangle_{\mathbf M}
\]
with $a_n=a_n(\mathbf M)=\frac{\gamma}{n}\sum_{k=0}^{n-1}a_k(\mathbf M)$ given in (\ref{eq:ghyq})
and the QMOM can be written as $M_{2n}=\langle X^{2n}-Q_n^2\rangle_{\mathbf M'}$.

For the resultant moment closure, we have

\begin{proposition} \label{prop:clos}
The charateristic polynomial of the closure system induced by (\ref{eq:clos}) is the given $G(X;\mathbf M)$ if and only if
  \[
    \langle \partial_\mathbf M G \rangle_{\mathbf M} = \bm 0.
  \]
\end{proposition}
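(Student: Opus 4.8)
The plan is to compute the characteristic polynomial $F(X;\mathbf M)=\det\bigl(XI-A(\mathbf M)\bigr)$ directly from the structure of $A(\mathbf M)$ and compare it with $G$. First I would observe that, since the first $N$ components of $\mathcal F(\mathbf M)$ are just $(M_1,\dots,M_N)$, the coefficient matrix $A(\mathbf M)=\partial\mathcal F/\partial\mathbf M$ is a companion-type matrix: its $j$th row (for $j=0,\dots,N-1$, in the $0$-based indexing of $\mathbf M$) has a single $1$ in column $j+1$, while its last row is $\bigl(c_0,\dots,c_N\bigr)$ with $c_j:=\partial M_{N+1}/\partial M_j$. The standard formula for the characteristic polynomial of such a matrix then gives $F(X;\mathbf M)=X^{N+1}-\sum_{j=0}^N c_j X^j$, which I would justify by a one-line cofactor expansion along the first row.

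Next I would differentiate the closure (\ref{eq:clos}). From $M_{N+1}=-\sum_{k=0}^N g_k(\mathbf M)M_k$ the product rule yields $c_j=-g_j(\mathbf M)-\sum_{k=0}^N(\partial g_k/\partial M_j)(\mathbf M)\,M_k$. The key identification is that the last sum equals $\langle\partial_{M_j}G\rangle_{\mathbf M}$: indeed $\partial_{M_j}G=\sum_{k=0}^N(\partial g_k/\partial M_j)X^k$, because the monic leading term $X^{N+1}$ has coefficient independent of $\mathbf M$ and hence drops out; this polynomial has degree $\le N$, so it lies in the domain of $\langle\cdot\rangle_{\mathbf M}$, and applying the functional term by term replaces $X^k$ by $M_k$. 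Substituting $c_j=-g_j-\langle\partial_{M_j}G\rangle_{\mathbf M}$ into the formula for $F$ and regrouping gives $F(X;\mathbf M)=G(X;\mathbf M)+\sum_{j=0}^N\langle\partial_{M_j}G\rangle_{\mathbf M}\,X^j$.

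Finally, comparing the coefficients of $X^0,\dots,X^N$ on both sides shows that, for a fixed $\mathbf M$, $F(X;\mathbf M)$ and $G(X;\mathbf M)$ coincide as polynomials in $X$ if and only if $\langle\partial_{M_j}G\rangle_{\mathbf M}=0$ for every $j=0,\dots,N$, that is, $\langle\partial_{\mathbf M}G\rangle_{\mathbf M}=\bm 0$; letting $\mathbf M$ range over the domain yields the asserted equivalence. I do not expect a genuine obstacle here — the whole argument is a short computation — but two bookkeeping points deserve care: fixing the sign convention in the companion-matrix characteristic polynomial so that $F$ is monic and matches the monic $G$, and noting explicitly that although $G$ has degree $N+1$ its $\mathbf M$-derivatives have degree $\le N$, so that $\langle\partial_{\mathbf M}G\rangle_{\mathbf M}$ is well defined with the functional $\langle\cdot\rangle_{\mathbf M}$ on $\mathbb R[X]_N$.
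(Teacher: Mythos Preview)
Your proposal is correct and follows essentially the same route as the paper: compute the characteristic polynomial of the companion-type matrix $A(\mathbf M)$ as $F=X^{N+1}-\sum_j(\partial M_{N+1}/\partial M_j)X^j$, differentiate the closure relation to get $\partial M_{N+1}/\partial M_j=-g_j-\langle\partial_{M_j}G\rangle_{\mathbf M}$, and conclude $F=G+\sum_j\langle\partial_{M_j}G\rangle_{\mathbf M}X^j$. Your additional remarks on the sign convention and on $\deg(\partial_{\mathbf M}G)\le N$ are useful clarifications that the paper leaves implicit.
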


\begin{proof}
First we recall that for the general moment system (\ref{eq:unc1}), the characteristic polynomial of the coefficient matrix $A(\mathbf M)$ (of degree $N+1$) can be written as
\begin{equation} \label{eq:chp}
  F=F(X;\mathbf M) = \sum_{j=0}^\infty c_j X^j
\end{equation}
with $c_j(\mathbf M)=-\partial M_{N+1}/\partial M_j$ for $j=0,\dots,N$, $c_{N+1}=1$ and $c_j=0$ for $j\ge N+2$.
  Using this and (\ref{eq:clos}), we compute
  \[
  \begin{aligned}
    F &= -\sum_{k=0}^N \frac{\partial M_{N+1}}{\partial M_k} X^k + X^{N+1}
    = \sum_{k=0}^N \left( g_k + \sum_{j=0}^N \frac{\partial g_j}{\partial M_k} M_j \right) X^k + X^{N+1} \\
    &= G + \sum_{k,j=0}^N \frac{\partial g_j}{\partial M_k} M_j X^k.
  \end{aligned}
  \]
  It is clear that $F=G$ if and only if $\sum_{j=0}^N (\partial_{M_k} g_j) M_j = 0$ for $k=0,\dots,N$.
\end{proof}

With this proposition, we can choose proper polynomial $G$ and construct new closure systems that are strictly hyperbolic. One simple example is

\begin{proposition} \label{prop:newclos}
  Take $N=2n-1$ and $G(X;\mathbf M')=Q_n^2-Q_{n-1}^2$, where the orthogonal polynimials are generated by $\mathbf M'\in \Omega_{2n-1}$. The moment closure system induced by (\ref{eq:clos})
  \[
    M_{2n} = \langle X^{2n}-Q_n^2 +Q_{n-1}^2  \rangle_{\mathbf M'}
  \]
  is strictly hyperbolic.
\end{proposition}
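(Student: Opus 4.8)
The plan is to apply Proposition~\ref{prop:clos} directly with $G=G(X;\mathbf M')=Q_n^2-Q_{n-1}^2$, the orthogonal polynomials being those induced by $\mathbf M'\in\Omega_{2n-1}$ (recall that $a_{n-1}$ and $Q_n$ are still well defined in this even-order setting). First one checks admissibility: since $Q_n$ is monic of degree $n$, $Q_n^2$ is monic of degree $2n=N+1$, while $Q_{n-1}^2$ has degree $2n-2$, so $G$ is a monic polynomial of degree $N+1$ whose lower-order coefficients depend on $\mathbf M'$. By Proposition~\ref{prop:clos}, the characteristic polynomial of the closure system (\ref{eq:clos}) equals $G$ precisely when $\langle\partial_{\mathbf M'}G\rangle_{\mathbf M'}=\bm 0$. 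Granting this, strict hyperbolicity reduces to showing that $G$ has $2n$ distinct real roots.

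For the identity $\langle\partial_{\mathbf M'}G\rangle_{\mathbf M'}=\bm 0$, I would use $\partial_{M_j}(Q_k^2)=2\,Q_k\,\partial_{M_j}Q_k$ for $k=n-1,n$. Because $Q_k$ is monic, the coefficient of $X^k$ in $Q_k$ does not vary with $\mathbf M'$, so $\partial_{M_j}Q_k\in\mathbb R[X]_{k-1}$; in particular $Q_n\,\partial_{M_j}Q_n$ has degree $\le 2n-1$ and lies in the domain of $\langle\cdot\rangle_{\mathbf M'}$. The structural input is that $Q_{n-1}\perp\mathbb R[X]_{n-2}$ and $Q_n\perp\mathbb R[X]_{n-1}$ with respect to $\langle\cdot\rangle_{\mathbf M'}$: the former is immediate from the construction of $Q_{n-1}$, and the latter follows by a short computation with the three-term recurrence (\ref{eq:qrec}) and the formulas (\ref{eq:ab}) for $a_{n-1}$ and $b_{n-1}$ (every pairing involved uses only moments up to order $2n-1$, which are available since $\mathbf M'\in\Omega_{2n-1}$). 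Hence $\langle Q_n\,\partial_{M_j}Q_n\rangle_{\mathbf M'}=\langle Q_{n-1}\,\partial_{M_j}Q_{n-1}\rangle_{\mathbf M'}=0$ for every $j$, so every component of $\langle\partial_{\mathbf M'}G\rangle_{\mathbf M'}$ vanishes and the characteristic polynomial of the closure system is exactly $Q_n^2-Q_{n-1}^2$.

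It remains to show $Q_n^2-Q_{n-1}^2=(Q_n-Q_{n-1})(Q_n+Q_{n-1})$ has $2n$ distinct real roots, with each factor monic of degree $n$. Recall $Q_n$ is the characteristic polynomial of the $n\times n$ Jacobi matrix with diagonal $(a_0,\dots,a_{n-1})$ and positive off-diagonal entries $(\sqrt{b_1},\dots,\sqrt{b_{n-1}})$ (Proposition~\ref{prop:bij2}); by (\ref{eq:qrec}), each of $Q_n\pm Q_{n-1}$ is the characteristic polynomial of the same matrix with the last diagonal entry $a_{n-1}$ replaced by $a_{n-1}\mp1$. Such a matrix is again symmetric tridiagonal with strictly positive off-diagonal entries, hence has $n$ distinct real eigenvalues; so each factor has $n$ simple real zeros. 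A zero common to both factors would be a common zero of $Q_n$ and $Q_{n-1}$, which the recurrence (\ref{eq:qrec}) together with $b_k>0$ propagates down to a zero of $Q_0\equiv1$, an absurdity (alternatively, this is the interlacing of Proposition~\ref{prop:inter}). Therefore $G$ has $2n$ distinct real roots, and being the characteristic polynomial of the $2n\times2n$ coefficient matrix, the closure system is strictly hyperbolic.

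The steps are all elementary; the one requiring genuine care is the orthogonality $\langle Q_n\,p\rangle_{\mathbf M'}=0$ for $p\in\mathbb R[X]_{n-1}$, because $Q_n$ sits at the very edge of the range of the even-order functional $\langle\cdot\rangle_{\mathbf M'}$, so one must be sure the pairings used never call on a moment of order $\ge 2n$. I do not expect any further difficulty.
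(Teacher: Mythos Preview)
Your first step---verifying $\langle\partial_{\mathbf M'}G\rangle_{\mathbf M'}=\bm 0$ via the orthogonality of $Q_k$ against $\partial_{M_j}Q_k\in\mathbb R[X]_{k-1}$---is exactly the paper's argument, and your extra care about whether the pairings stay inside $\mathbb R[X]_{2n-1}$ in the even-order setting is appropriate.

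For the root count, however, you take a genuinely different route. The paper works directly with $G=Q_n^2-Q_{n-1}^2$ and uses the interlacing of the zeros of $Q_n$ and $Q_{n-1}$ (Proposition~\ref{prop:inter}): listing all $2n-1$ zeros in increasing order as $\lambda_1<\cdots<\lambda_{2n-1}$, one finds $\text{sgn}\,G(\lambda_i)=(-1)^i$, which forces $2n-2$ roots by sign changes, and the two outermost roots come from $G(\lambda_1),G(\lambda_{2n-1})<0$ together with $G(X)\to+\infty$ as $|X|\to\infty$. Your approach instead factors $G=(Q_n-Q_{n-1})(Q_n+Q_{n-1})$ and identifies each factor, via the recurrence (\ref{eq:qrec}), as the characteristic polynomial of the Jacobi matrix with $a_{n-1}$ shifted by $\pm1$, invoking the simple-spectrum property of irreducible symmetric tridiagonal matrices; disjointness of the two zero sets then follows from $Q_n$ and $Q_{n-1}$ having no common root. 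Both arguments are short and correct. The paper's sign-change argument stays entirely within the orthogonal-polynomial framework already set up and needs no matrix representation, while your factorization makes the algebraic structure more transparent and generalizes immediately (e.g., to $G=Q_n^2-c^2Q_{n-1}^2$ for any $c\neq0$). One minor point: Proposition~\ref{prop:bij2} records only the bijection $(a_k,b_k)\leftrightarrow\mathbf M'$ and does not itself state that $Q_n=\det(XI-J_n)$; that identity is standard (it follows by expanding along the last row and matching with (\ref{eq:qrec})), but you should justify or cite it separately rather than attribute it to that proposition.
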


\begin{proof}
  First, we refer to Proposition \ref{prop:clos} and compute
  \[
    \langle \partial_{M_i} G \rangle_{\mathbf M'} = 2 \langle Q_n \partial_{M_i} Q_n \rangle_{\mathbf M'} - 2 \langle Q_{n-1} \partial_{M_i} Q_{n-1} \rangle_{\mathbf M'} = 0,
  \]
  where the last step is due to the orthogonality of $Q_k$ and $\partial_{M_i}Q_k$ (a polynomial of degree $\le k-1$). This means that the characteristic polynomial is just $G(X;\mathbf M')$.
  Then, we show that $G(X;\mathbf M')$ has $2n$ distinct real roots. Denote by $\{\lambda_{2i-1} \}_{i=1}^n$ and $\{\lambda_{2i} \}_{i=1}^{n-1}$ the distinct roots of $Q_n$ and $Q_{n-1}$, respectively.
  Because of Proposition \ref{prop:inter}, we may assume $\lambda_1<\lambda_2<\dots<\lambda_{2n-1}$. This implies $\text{sgn} G(\lambda_i;\mathbf M') = (-1)^i$ for $i=1,\dots,2n-1$ and hence $G(X;\mathbf M')$ has $2n-2$ distinct real roots. The additional two roots are due to $G(\lambda_1;\mathbf M')<0$, $G(\lambda_{2n-1};\mathbf M')<0$ and $G(X;\mathbf M')\to\infty$ as $|X|\to\infty$.
\end{proof}

Furthermore, we can prove the following proposition in the appendix.
\begin{proposition} \label{prop:qeqhomo}
  The QMOM and EQMOM with kernels (\ref{eq:eqker}) are just the closure (\ref{eq:clos}) by choosing $G(X;\mathbf M)$ as the corresponding charateristic polynomials.
\end{proposition}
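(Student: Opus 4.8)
The plan is to reduce the statement to a single structural fact: the QMOM and EQMOM closures $M_{N+1}=M_{N+1}(\mathbf M)$ are positively homogeneous of degree one, i.e. $M_{N+1}(t\mathbf M)=tM_{N+1}(\mathbf M)$ for every $t>0$ and every $\mathbf M$ in the respective (open) domains. Granting this, the conclusion is immediate. Let $F=F(X;\mathbf M)$ be the characteristic polynomial of the corresponding closure system; by (\ref{eq:chp}) its coefficients are $[F]_k=-\partial M_{N+1}/\partial M_k$ for $k=0,\dots,N$, with $[F]_{N+1}=1$. Then the polynomial-based closure (\ref{eq:clos}) built from $G=F$ returns
\[
  -\sum_{k=0}^N[F]_kM_k=\sum_{k=0}^N\frac{\partial M_{N+1}}{\partial M_k}M_k=M_{N+1},
\]
where the last equality is Euler's identity for the degree-one homogeneous function $M_{N+1}$. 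Differentiability is not an issue: the QMOM closure $M_{2n}=\langle X^{2n}-Q_n^2\rangle_{\mathbf M'}$ is a rational function of $\mathbf M'\in\Omega_{2n-1}$, and the EQMOM closure is smooth because the moment map $W\mapsto\mathbf M$ is assumed a local diffeomorphism on the EQMOM domain.

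It remains to establish the homogeneity. For the QMOM ($N=2n-1$), rescaling $\mathbf M'\mapsto t\mathbf M'$ only multiplies the functional $\langle\cdot\rangle_{\mathbf M'}$ by $t$; hence the \emph{monic} orthogonal polynomials $Q_k$ are unchanged, equivalently the quadrature nodes $u_i$ of Proposition \ref{prop:qmomrecons} are unchanged while the weights rescale as $w_i\mapsto tw_i$, so $M_{2n}=\sum_iw_iu_i^{2n}$ scales by $t$. (Equivalently, in $M_{2n}=\langle X^{2n}-Q_n^2\rangle_{\mathbf M'}=-\sum_{k<2n}[Q_n^2]_kM_k$ the coefficients $[Q_n^2]_k$ are homogeneous of degree zero in $\mathbf M'$.) For the EQMOM with kernels (\ref{eq:eqker}) ($N=2n$), the moments of the ansatz $f(\xi)=\sum_iw_i\delta_\sigma(\xi;\xi_i)$ are linear in the weight vector $(w_i)$; hence the moment map satisfies $W=(w_i,\xi_i,\sigma)\mapsto\mathbf M$ and $(tw_i,\xi_i,\sigma)\mapsto t\mathbf M$, so its domain is invariant under $\mathbf M\mapsto t\mathbf M$ and the bijection carries $t\mathbf M$ back to $(tw_i,\xi_i,\sigma)$. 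Consequently $M_{2n+1}=\sum_iw_i\int_{\mathbb R}\xi^{2n+1}\delta_\sigma(\xi;\xi_i)\,d\xi$ scales by $t$. Combining the two cases with the first paragraph completes the proof.

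I do not expect a serious obstacle; the two points requiring care are the smoothness of the closures (needed to invoke Euler's identity and the coefficient formula in (\ref{eq:chp})) and the cone invariance of the EQMOM domain used in the scaling step, both of which are already part of the setup in Section \ref{sec:qbmm}. For the QMOM one can also bypass Euler entirely: since $\partial_{M_i}Q_n$ has degree $\le n-1$ and is $\langle\cdot\rangle_{\mathbf M'}$-orthogonal to $Q_n$, the vector $\langle\partial_{\mathbf M'}(Q_n^2)\rangle_{\mathbf M'}$ vanishes, so Proposition \ref{prop:clos} directly identifies $Q_n^2$ as the QMOM characteristic polynomial (consistent with the known non-strict-hyperbolicity of QMOM, its eigenvalues being the double roots of $Q_n^2$). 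The only genuinely new computation, then, is to exhibit the EQMOM characteristic polynomial in the form demanded by the statement, which the homogeneity argument above handles without computing it explicitly.
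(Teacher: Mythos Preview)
Your argument is correct, and for the EQMOM half it takes a genuinely different route from the paper. The paper imports the explicit form of the EQMOM characteristic polynomial $F=\mathcal D_\sigma g$ from \cite{ZHY2023} (with $g=(X-\xi_1)^2\cdots(X-\xi_n)^2(X-\tilde u)$ and $\mathcal D_\sigma=\sum_{k\ge0}\mathfrak b_k\sigma^k\partial^k$ built from the kernel moments), proves the polynomial identity $P(X)=\int\sigma^{-1}\calk((\xi-X)/\sigma)\,\mathcal D_\sigma P(\xi)\,d\xi$, and then checks $\int f(\xi)F(\xi)\,d\xi=\sum_i w_i g(\xi_i)=0$ directly. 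Your Euler-identity reduction bypasses all of this: once degree-one homogeneity is in hand, $M_{N+1}=\sum_k(\partial_{M_k}M_{N+1})M_k=-\sum_k c_kM_k=\langle X^{N+1}-F\rangle_{\mathbf M}$ holds for \emph{any} smooth closure, without ever identifying $F$. This is shorter and more conceptual, and in fact promotes the proposition to a general lemma (``homogeneous of degree one $\Rightarrow$ closure~(\ref{eq:clos}) with $G=F$''); the paper's computation, on the other hand, has the side benefit of actually exhibiting $F$. For the QMOM half the two approaches essentially meet: the paper cites $F=Q_n^2$ from \cite{Huang2020} and uses $\langle Q_n^2\rangle_{\mathbf M}=0$, which is precisely your alternative via Proposition~\ref{prop:clos} and the orthogonality $\langle Q_n\,\partial_{\mathbf M'}Q_n\rangle=0$. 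The smoothness and cone-invariance caveats you flag are indeed already built into the setup of Section~\ref{sec:qbmm} (and are needed equally by the paper to make sense of $c_j=-\partial M_{N+1}/\partial M_j$), so they pose no obstacle.
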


We conclude this section with a proof of Proposition 4.1.
\begin{proof}[Proof of Proposition \ref{prop:hypf}]
  Set $G=Q_nR_{n+1}$ with $R_{n+1}=(X-a_n)Q_n-c_nQ_{n-1}$ and $c_n=\frac{2n+\gamma}{n}b_n$. Recall that $\langle XQ_n^2\rangle_{\mathbf{\tilde M}}=a_n\langle Q_n^2\rangle_\mathbf M$ with $a_n=a_n(\mathbf M)$ given in (\ref{eq:ghyq}) and thereby
  \[
  \begin{aligned}
    M_{2n+1} &=\langle X^{2n+1}-XQ_n^2\rangle_{\mathbf M}+a_n\langle Q_n^2\rangle_{\mathbf M}\\
    &= \langle X^{2n+1}-(X-a_n)Q_n^2)\rangle_\mathbf M\\
    &= \langle X^{2n+1}-G\rangle_\mathbf M.
  \end{aligned}
  \]
  Here we have used the orthogonality $\langle Q_nQ_{n-1}\rangle_\mathbf M=0$.

  Next we show that $G\in\mathbb R[X]_{2n+1}$ is just the charateristic polynomial corresponding to the closure (\ref{eq:ghyq}). By Proposition 4.2, it suffices to show $\langle\partial_\mathbf M G\rangle_\mathbf M=0$. To do this, we firstly write $Q_n=X^n-\sigma X^{n-1}+[\cdots]$ with $[\cdots]$ a polynomial in $\mathbb R[X]_{n-2}$ and deduce from (\ref{eq:qrec}) that
  \[
  \begin{aligned}
    Q_n &= (X-a_{n-1})Q_{n-1}-b_{n-1}Q_{n-2}\\
    &= (X-a_{n-1})((X-a_{n-2})Q_{n-2}-b_{n-2}Q_{n-3})-b_{n-1}Q_{n-2}\\
    &= \cdots\\
    &= (X-a_{n-1})(X-a_{n-2})\cdots(X-a_0)Q_0+[\cdots]\\
    &= X^n-\sum_{k=0}^{n-1}a_kX^{n-1}+[\cdots].
  \end{aligned}
  \]
  This implies $\sigma=\sum_{k=0}^{n-1}a_k$ and thereby
  \[
    a_n  =\frac{\gamma}{n}\sum_{k=0}^{n-1}a_k= \frac{\gamma}{n}\sigma.
  \]
  With this relation, we use the orthogonality and compute
  \[
  \begin{aligned}
    \langle \partial_{\mathbf M} G\rangle=&\langle ((X-a_n)Q_n-c_nQ_{n-1})\partial_{\mathbf M}Q_n\rangle+\langle Q_n\partial_{\mathbf M}((X-a_n)Q_n-c_nQ_{n-1})\rangle\\
    =&\langle (X-a_n)Q_n\partial_{\mathbf M}Q_n\rangle-c_n\langle Q_{n-1}\partial_{\mathbf M}Q_n\rangle+\langle (X-a_n)Q_n\partial_{\mathbf M}Q_n\rangle\\
    &+\langle Q_n^2\partial_{\mathbf M} (X-a_n)\rangle-c_n\langle Q_n\partial_{\mathbf M}Q_{n-1}\rangle-\langle Q_nQ_{n-1}\partial_{\mathbf M}c_n\rangle\\
    =&-2\langle Q_n^2\rangle\partial_{\mathbf M}\sigma-c_n\langle Q_{n-1}^2\rangle\partial_{\mathbf M}\sigma-\langle Q_n^2\rangle\partial_{\mathbf M}a_n\\
    =& \left(-2+\frac{c_n}{b_n}-\frac{\gamma}{n} \right)\langle Q_n^2\rangle\partial_{\mathbf M}\sigma=0.
  \end{aligned}
  \]
This completes the proof.
\end{proof}

\section{Structural stability} \label{sec:stab}

This section is devoted to a proof of Theorem \ref{thm:stab}. Namely, we will show that the HyQMOM induced moment system satisfies the structural stability condition (I)-(III) in subsection \ref{subsec:ssc}. Because Condition (II) is equivalent to the  hyperbolicity, we only need to verify Conditions (I) and (III). Moreover, the both conditions need to be examined only on the equilibrium manifold
\[
 \mathcal E = \{ (\rho \Delta_0(U,\theta), \dots, \rho \Delta_{N}(U,\theta)) \in\mathbb R^{N+1} \ | \ \rho>0, U\in\mathbb R, \theta>0 \}
\]
with
\[
 \rho=M_0, \quad
 U = \frac{M_1}{M_0}, \quad
 \theta = \frac{M_0M_2-M_1^2}{M_0^2}.
\]
Clearly, these three equalities define a bijection between $(\rho, U, \theta)$ and $(M_0,M_1, M_2)$.

We first consider Condition (I). The Jacobian of the source $S(\mathbf M)$ in (\ref{eq:unc1}) reads as
\[
  S_\mathbf M =
  \begin{bmatrix}
    \bm 0_{3\times 3} & \\
    \hat S_\mathbf M & -I_{N-2}
  \end{bmatrix},
\]
where the $(N-2)\times 3$ matrix $\hat S_\mathbf M$ is
\[
  \hat S_\mathbf M = \diffp{(\rho \Delta_3(U,\theta),\dots,\rho \Delta_{N}(U,\theta))}{{(M_0,M_1,M_2)}}.
\]
It is straightforward to show that for
\begin{equation} \label{eq:Pinv}
  P^{-1} =
  \begin{bmatrix}
    Y & \\
    \hat S_\mathbf M Y & I_{N-2}
  \end{bmatrix}
  \quad \text{with} \quad
  Y=\diffp{{(M_0,M_1,M_2)}}{{(\rho,U,\theta)}} \in \mathbb R^{3\times 3},
\end{equation}
we have
\[
  S_\mathbf M P^{-1} = P^{-1}
  \begin{bmatrix}
    \bm 0_{3\times 3} & \\
    & -I_{N-2}
  \end{bmatrix},
\]
thus justifying Condition (I) for (\ref{eq:unc1}). Clearly, Condition (I) holds for any moment closure method leading to a system of first-order PDEs in form of (\ref{eq:unc1}).

\subsection{A sketch for verifying Condition (III)}
\label{subsec:cond3}

The verification of Condition (III) consists of several steps.
The first step is to find a symmetrizer $A_0=A_0(\mathbf M)$ of the coefficient matrix $A(\mathbf M)$ with distinct eigenvalues $\lambda_0<\dots<\lambda_{N}$.
As shown in \cite{Huang2020}, the symmetrizer $A_0$ must be in the form $L^TDL$ with
\begin{equation} \label{eq:L}
\begin{split}
  L &=
  \begin{bmatrix}
    \lambda_0^{N} & \lambda_0^{N-1} & \cdots & \lambda_0 & 1 \\
    \lambda_1^{N} & \lambda_1^{N-1} & \cdots & \lambda_1 & 1 \\
    \lambda_2^{N} & \lambda_2^{N-1} & \cdots & \lambda_2 & 1 \\
    \vdots & \vdots & & \vdots & \vdots \\
    \lambda_{N}^{N} & \lambda_{N}^{N-1} & \cdots & \lambda_{N} & 1
  \end{bmatrix}
  \begin{bmatrix}
    1 &&&& \\
    c_{N} & 1 &&& \\
    c_{N-1} & c_{N} & 1 && \\
    \vdots & \vdots & \vdots & \ddots & \\
    c_1 & c_2 & c_3 & \cdots & 1
  \end{bmatrix} \\[4mm]
  &= (F_k(\lambda_i))_{0\le i,k\le N}
\end{split}
\end{equation}
and
\[
  D=\diag(\omega_0,\dots,\omega_{N}) \in \mathbb R^{(N+1)\times (N+1)}
\]
a positive definite matrix \textit{to be determined}.
Here $F_k(X)$ stands for
\begin{equation} \label{eq:Fk}
  F_k(X) = \sum_{j=k+1}^\infty c_j X^{j-k-1}
\end{equation}
for $k=0,\dots,N$ as polynomials of degree $N-k$ and $F_k(X)=0$ for $k\ge N+1$.

By Theorem 2.1 in \cite{Yong1999}, Condition (III) is satisfied if and only if there exists a symmetrizer $A_0$ such that the matrix
\begin{equation} \label{eq:KM}
  K(\mathbf M) := P^{-T} A_0 P^{-1} = (LP^{-1})^T D (LP^{-1})
\end{equation}
is of the block-diagonal form $\diag \left(K'_{3\times 3},\ K''_{(N-2)\times(N-2)} \right)$ for equilibrium states $\mathbf M\in\mathcal E$.
With $L$ given in (\ref{eq:L}) and $P^{-1}$ given in (\ref{eq:Pinv}), our task is to demonstrate the existence of a positive definite diagonal matrix $D=\diag(\omega_i)_{i=0}^{N}$.

Before preceeding, we remark that, for $\mathbf M\in\mathcal E$, the characteristic polynomial $F(X;\mathbf M)$ and the coefficients $c_j(\mathbf M)$ in (\ref{eq:chp}) are only dependent on $(U, \theta)$.
  To see this, we deduce from Proposition \ref{prop:hypf} and (\ref{eq:qrec}) that $F(X;\mathbf M)$ and $c_j(\mathbf M)$ are determined by the coefficients $a_k(\mathbf M)$ and $b_k(\mathbf M)$ in (\ref{eq:ab}), where the involved bracket $\langle \cdot \rangle$ is defined as $\langle X^k \rangle \mapsto \rho\Delta_k(U,\theta)$ for $\mathbf M\in\mathcal E$.
  Therefore, it is clear from (\ref{eq:ab}) that $a_k(\mathbf M)$ and $b_k(\mathbf M)$ are independent of $\rho$ at equilibrium states, so too are $F(X;\mathbf M)$ and $c_j(\mathbf M)$.
Thanks to this fact, we write $F(X;\mathbf M)$ and $c_j(\mathbf M)$ as $F(X;U,\theta)$ and $c_j(U,\theta)$, respectively.
  Furthermore, the eigenvalues $\lambda_i(\mathbf M)$ ($i=0,\dots,N$) (roots of $F(X;\mathbf M)$) and the polynomials $F_k(X;\mathbf M)$ in (\ref{eq:Fk}) are all dependent on $(U,\theta)$ for $\mathbf M\in\mathcal E$, denoted $\lambda_i(U,\theta)$ and $F_k(X;U,\theta)$, respectively.

Now we calculate $K(\mathbf M)$ defined in (\ref{eq:KM}). Notice that the first three columns of $P^{-1}$ in (\ref{eq:Pinv}) are
\begin{equation} \label{eq:pinvcol}
  (\Delta_j(U,\theta), \ \rho \partial_U \Delta_j(U,\theta), \ \frac{\rho}{2} \partial_U^2 \Delta_j(U,\theta))_{j=0}^{N} \in \mathbb R^{(N+1)\times 3},
\end{equation}
where the expression of the third column is due to $(\partial_\theta - \frac{1}{2}\partial_U^2)\Delta_j(U,\theta)=0$ (see, for example, Lemma 4.1 (c) and (d) in \cite{Huang2020}).
Then we use (\ref{eq:L}) and (\ref{eq:pinvcol}) to obtain the $i$th row of the matrix $LP^{-1}$ as
\begin{equation} \label{eq:lpinvrow}
  \left( h_0(\lambda_i;U,\theta), \ \rho h_1(\lambda_i;U,\theta), \ \frac{\rho}{2}h_2(\lambda_i;U,\theta), \ F_3(\lambda_i), \dots, F_{N}(\lambda_i) \right),
\end{equation}
where the polynomial $h_j(X;U,\theta)$ is defined as
\begin{equation} \label{eq:hj}
\begin{split}
  h_j(X;U,\theta) = \sum_{k=0}^{\infty} F_k(X) \partial_U^j \Delta_k(U,\theta)
  =\sum_{k=0}^\infty X^k \left[ \sum_{l=0}^\infty c_{l+k+1} \partial_U^j \Delta_l(U,\theta) \right].
\end{split}
\end{equation}
Here the dependence on $(U,\theta)$ is omitted for $F_k(X)$, $\lambda_i$ and $c_j$ for clarity. Recall from \cite{Huang2020} that $\Delta_l(U,\theta)$ is a polynomial of degree $l$ in $U$. Then it is not difficult to see that the summation over $k$ actually goes from 0 to $N-j$ in the second equality.

Note that $F_k(\lambda_i)$ is a linear combination of $\lambda_i^\beta$ ($0\le\beta\le N-k$). We see from (\ref{eq:lpinvrow}) that $K(\mathbf M) = (LP^{-1})^TD(LP^{-1})$ attains the block-diagonal form if and only if the $3(N-2)$ relations
\begin{equation} \label{eq:goal}
  \sum_{i=0}^{N} \omega_i h_j(\lambda_i;U,\theta) \lambda_i^\beta = 0
\end{equation}
hold for $j=0,1,2$ and $\beta=0,1,\dots,N-3$. Recall that $N\geq3$.
Consequently, we only need to prove
\begin{proposition} \label{prop:target}
  For any $\mathbf M \in \mathcal E$, there exist $\{\omega_i>0\}_{i=0}^{N}$ such that the $3(N-2)$ equations in (\ref{eq:goal}) hold for $j=0,1,2$ and $\beta=0,\dots, N-3$.
\end{proposition}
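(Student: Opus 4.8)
The plan is to reduce the claim to the standard normal equilibrium, evaluate the polynomials $h_j$ at the $2n+1$ characteristic speeds in closed form, and thereby collapse the overdetermined system (\ref{eq:goal}) to an elementary moment problem whose solution automatically has the correct signs.

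\emph{Step 1 (reduction to $U=0$, $\theta=1$).} At an equilibrium state the functional $\langle\cdot\rangle$ is induced by the Gaussian $\rho\,\phi_{\sqrt\theta}(\,\cdot-U)$, whose three-term recurrence coefficients are $a_k\equiv U$ and $b_k=k\theta$; in particular the closure (\ref{eq:hyqmom}) gives $a_n=\tfrac1n\sum_{k=0}^{n-1}a_k=U$, a manifestation of the affine invariance of Proposition \ref{prop:hyqafinv}. Hence, by Proposition \ref{prop:hypf}, $F(X)=\theta^{(2n+1)/2}F^{\mathrm{std}}((X-U)/\sqrt\theta)$ with $F^{\mathrm{std}}=\mathrm{He}_n\,R^{\mathrm{std}}_{n+1}$, where $\mathrm{He}_k$ are the probabilists' Hermite polynomials and $R^{\mathrm{std}}_{n+1}(z)=z\,\mathrm{He}_n(z)-(2n+1)\mathrm{He}_{n-1}(z)=\mathrm{He}_{n+1}(z)-(n+1)\mathrm{He}_{n-1}(z)$, so that $\lambda_i=U+\sqrt\theta\,\mu_i$ with $\{\mu_i\}$ the roots of $F^{\mathrm{std}}$. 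Using the Rodrigues identity $\partial_U^j\Delta_k(U,\theta)=\theta^{-j/2}\langle\xi^k\,\mathrm{He}_j((\xi-U)/\sqrt\theta)\rangle$, the identity $\sum_k F_k(X)\xi^k=(F(X)-F(\xi))/(X-\xi)$ behind (\ref{eq:hj}), and $F(\lambda_i)=0$, the substitution $\xi=U+\sqrt\theta\,z$ gives $h_j(\lambda_i)=\theta^{\,n-j/2}h^{\mathrm{std}}_j(\mu_i)$ with $h^{\mathrm{std}}_j(\mu)=\int \mathrm{He}_j(z)F^{\mathrm{std}}(z)(z-\mu)^{-1}\phi_1(z)\,dz$. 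Since $\{(U+\sqrt\theta\,\mu)^\beta:\beta\le 2n-3\}$ spans the same space of polynomials in $\mu$ as $\{\mu^\beta:\beta\le 2n-3\}$, since $h_j$ and $\lambda_i$ do not depend on $\rho$, and since (\ref{eq:goal}) is homogeneous in $(\omega_i)$, it suffices to produce $\{\omega_i>0\}$ with $\sum_i\omega_i\,h^{\mathrm{std}}_j(\mu_i)\mu_i^\beta=0$ for $j=0,1,2$ and $\beta=0,\dots,2n-3$.

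\emph{Step 2 (evaluating $h^{\mathrm{std}}_j$).} The core computation is to show, using only the factorization $F^{\mathrm{std}}=\mathrm{He}_nR^{\mathrm{std}}_{n+1}$ and the orthogonality of $\{\mathrm{He}_k\}$ with respect to $\phi_1$, that for $j\in\{0,1,2\}$
\[
  h^{\mathrm{std}}_j(\mu)=-\tfrac{(n+1)!}{n}\,\mathrm{He}_j(\mu)\ \text{ if }\ \mathrm{He}_n(\mu)=0,
  \qquad
  h^{\mathrm{std}}_j(\mu)=n!\,\mathrm{He}_j(\mu)\ \text{ if }\ R^{\mathrm{std}}_{n+1}(\mu)=0 .
\]
In the first case $F^{\mathrm{std}}(z)/(z-\mu)=(\mathrm{He}_n(z)/(z-\mu))R^{\mathrm{std}}_{n+1}(z)$ with $\mathrm{He}_n(z)/(z-\mu)$ monic of degree $n-1$, and one reads off the $\mathrm{He}_j$-coefficient after expanding against $\mathrm{He}_{n\pm1}$; in the second case $F^{\mathrm{std}}(z)/(z-\mu)=\mathrm{He}_n(z)(R^{\mathrm{std}}_{n+1}(z)/(z-\mu))$ and only the $\mathrm{He}_n$-component of $(R^{\mathrm{std}}_{n+1}(z)/(z-\mu))\mathrm{He}_j(z)$ contributes. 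I expect this bookkeeping — tracking the first three coefficients of $\mathrm{He}_n(z)/(z-\mu)$ and $R^{\mathrm{std}}_{n+1}(z)/(z-\mu)$ via $\mathrm{He}_m(z)=z^m-\binom{m}{2}z^{m-2}+\cdots$ and the recurrence — to be the main technical obstacle, though it is routine; for $n=2$ one finds $F^{\mathrm{std}}=X^5-7X^3+6X$, $h^{\mathrm{std}}_0=X^4-6X^2+2$, $h^{\mathrm{std}}_1=X^3-4X$, $h^{\mathrm{std}}_2=2(X^2-1)$, in agreement with both formulas at all five roots.

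\emph{Step 3 (collapse to a moment problem).} List the roots in increasing order $\mu_0<\mu_1<\cdots<\mu_{2n}$; by Propositions \ref{prop:inter} and \ref{prop:hypf} the even-indexed $\mu_k$ are the roots of $R^{\mathrm{std}}_{n+1}$ and the odd-indexed ones are the roots of $\mathrm{He}_n$. Since $\mathrm{span}\{\mathrm{He}_j(X)X^\beta:\ j\le 2,\ \beta\le 2n-3\}=\mathbb R[X]_{2n-1}$, Step 2 shows that (\ref{eq:goal}) holds for all admissible $(j,\beta)$ if and only if the signed atomic measure
\[
  \nu:=n!\sum_{k\ \mathrm{even}}\omega_k\,\delta_{\mu_k}-\tfrac{(n+1)!}{n}\sum_{k\ \mathrm{odd}}\omega_k\,\delta_{\mu_k}
\]
has vanishing moments of orders $0,1,\dots,2n-1$. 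Such signed measures on the $2n+1$ distinct nodes form a one-dimensional space (the $2n\times(2n+1)$ Vandermonde $(\mu_k^j)$ has full rank $2n$), and a nonzero representative is the divided-difference functional $\sum_k d_k\delta_{\mu_k}$ with $d_k=(\prod_{l\ne k}(\mu_k-\mu_l))^{-1}$. As $\prod_{l\ne k}(\mu_k-\mu_l)$ has $k$ positive and $2n-k$ negative factors, $\mathrm{sgn}\,d_k=(-1)^k$: positive exactly on the roots of $R^{\mathrm{std}}_{n+1}$, negative exactly on the roots of $\mathrm{He}_n$. Therefore $\omega_k:=d_k/n!$ for even $k$ and $\omega_k:=-\,n\,d_k/(n+1)!$ for odd $k$ (or any positive multiple) are all positive and solve (\ref{eq:goal}). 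The structural point is that, once Step 2 linearises $h^{\mathrm{std}}_j$, the overdetermined system forces $\nu$ to be the essentially unique divided-difference functional, whose sign pattern is precisely the one needed for $\omega_i>0$; this proves Proposition \ref{prop:target}.
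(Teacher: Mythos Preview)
Your argument is correct and takes a genuinely different route from the paper's.

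Both proofs begin by using affine invariance to reduce to the standard normal equilibrium (your Step~1 is the content of the paper's Proposition~\ref{prop:Faffinv}). From there the paths diverge. The paper first uses recursion identities among the $\mathcal A_j^\beta$ (Proposition~\ref{prop:reduce}) to reduce the $3(N-2)$ relations to the $N$ relations $\mathcal A_0^\beta=0$, then \emph{guesses} the moments $p_k$ in (\ref{eq:pksol}) and verifies $\mathcal A_0^\beta=0$ through a rather long combinatorial computation (Proposition~\ref{prop:pkchoice}, subsection~\ref{subsec:pk}); positivity of the resulting $\omega_i$ is established separately by mimicking the proof of Proposition~\ref{prop:hqrecons2}. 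You instead evaluate $h_j^{\mathrm{std}}$ at the eigenvalues in closed form, obtaining the remarkable identity $h_j^{\mathrm{std}}(\mu_i)=c_i\,\mathrm{He}_j(\mu_i)$ with $c_i$ depending only on the parity of $i$. This collapses the full system at once to a Vandermonde kernel condition, and positivity of $\omega_i$ then falls out automatically from the alternating signs of the divided-difference weights together with the interlacing of the roots of $\mathrm{He}_n$ and $R_{n+1}^{\mathrm{std}}$. Your $\omega_i$ agree (up to the obvious scaling) with the paper's.

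What your approach buys is a complete bypass of the combinatorics in subsection~\ref{subsec:pk} and of the separate positivity argument; the whole verification is reduced to the three orthogonality computations in Step~2 (which I have checked are correct for all $j\in\{0,1,2\}$; do write them out, as the $j=2$ case requires the identity $z\,R_{n+1}^{\mathrm{std}}=\mathrm{He}_{n+2}-(n^2-1)\mathrm{He}_{n-2}$). What the paper's approach buys is a reduction principle (Proposition~\ref{prop:reduce}) valid for \emph{any} affine-invariant, homogeneous closure with the correct equilibrium value, which may be reusable for other models; your Step~2 is specific to the HyQMOM factorization $F^{\mathrm{std}}=\mathrm{He}_n R_{n+1}^{\mathrm{std}}$.
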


It is quite cumbersome to handle (\ref{eq:goal}) directly. Fortunately, the HyQMOM closure (\ref{eq:hyqmom}) is \textit{affine invariant}, as stated in Proposition \ref{prop:hyqafinv} and proved in subsection \ref{subsec:hyafinv}.
Thank to this affine invariance, the characteristic polynomial has the following elegant form.

\begin{proposition} \label{prop:Faffinv}
  If the moment closure is affine invariant, then for any $\mathbf M \in \mathcal E$, the characteristic polynomial (\ref{eq:chp}) satisfies
\begin{equation}\label{eq:fhat}
  F(X;U,\theta) = \sigma^{N+1} \hat F \left( \frac{X-U}{\sigma} \right)
\end{equation}
and the polynomials defined in (\ref{eq:hj}) can be written as
\begin{equation} \label{eq:haffinv}
  h_j(X;U,\theta) = \sigma^{N-j} \hat h_j \left( \frac{X-U}{\sigma} \right)
\end{equation}
for $j=0,1,2$.
Here $\sigma = \sqrt\theta$ and $\hat \varphi(\cdot):=\varphi(\cdot;0,1)$.
\end{proposition}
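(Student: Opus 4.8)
\textbf{Proof proposal for Proposition \ref{prop:Faffinv}.}

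The plan is to use the affine invariance \eqref{eq:affinv} of the moment closure to track how all the relevant objects transform under the scaling $\mathbf M \mapsto \mathcal S_N^{[u,\sigma]}(\mathbf M)$, and then specialize to equilibrium states. First I would observe that, on the equilibrium manifold, the transformation $\mathcal S_N^{[U,\sigma]}$ with $\sigma=\sqrt\theta$ maps the standard equilibrium moments $\hat{\mathbf M} := (\Delta_0(0,1),\dots,\Delta_N(0,1))$ precisely onto $\mathbf M = (\rho\Delta_0(U,\theta),\dots)$ up to the density factor $\rho$ (which rescales all moments uniformly and hence, by homogeneity of the closure, does not affect $F$ or the $c_j$'s, as already noted in the sketch preceding Proposition \ref{prop:target}). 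This reduces the claim to understanding the single scaling $\mathcal S^{[u,\sigma]}$ and its effect on the characteristic polynomial.

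Next I would establish the transformation rule for $F$. Recall from \eqref{eq:chp} that the coefficients $c_j(\mathbf M) = -\partial M_{N+1}/\partial M_j$ are built from the closure map. Using the affine invariance \eqref{eq:affinv}, which says $M_{N+1}(\mathcal S_N^{[u,\sigma]}(\mathbf M))$ equals the last component of $\mathcal S_{N+1}^{[u,\sigma]}(\mathbf M, M_{N+1}(\mathbf M))$, I would differentiate with respect to the $M_k$'s and read off how the vector $(c_0,\dots,c_N,1)$ transforms. The upper-triangular binomial structure of $\mathcal S_k^{[u,\sigma]}$ in \eqref{eq:Sk} is exactly the structure of the coefficient vector of the polynomial substitution $X \mapsto (X-u)/\sigma$ (up to the overall factor $\sigma^{N+1}$): concretely, if $\hat F(Y)=\sum \hat c_j Y^j$ then $\sigma^{N+1}\hat F((X-u)/\sigma)$ has coefficients given by applying the transpose-inverse of $\mathcal S_{N+1}^{[u,\sigma]}$ to $(\hat c_j)$. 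Matching these two computations gives \eqref{eq:fhat} with $\hat F(Y) = F(Y;0,1)$, after setting $u=U$, $\sigma=\sqrt\theta$.

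For the polynomials $h_j$, I would proceed directly from their definition \eqref{eq:hj}: $h_j(X;U,\theta)=\sum_k F_k(X)\,\partial_U^j\Delta_k(U,\theta)$, where $F_k$ is the $k$th \emph{truncated tail} of $F$ as in \eqref{eq:Fk}. Two facts combine here. First, the $F_k$'s inherit a scaling law from \eqref{eq:fhat}: since $F_k(X)=\sum_{j\ge k+1}c_j X^{j-k-1}$ is obtained by stripping the bottom $k{+}1$ coefficients, one checks $F_k(X;U,\theta)=\sigma^{N-k}\hat F_k((X-U)/\sigma)$, where $\hat F_k$ is the corresponding tail of $\hat F$. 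Second, $\Delta_k(U,\theta)=\int \xi^k\phi_{\sqrt\theta}(\xi-U)\,d\xi$ satisfies the homogeneity $\Delta_k(U,\theta)=\sum_{l}\binom{k}{l}\sigma^l U^{k-l}\hat\Delta_l$ with $\hat\Delta_l=\Delta_l(0,1)$ — i.e. the standard-normal moments — which is just $\mathcal S_k^{[U,\sigma]}$ applied to $(\hat\Delta_0,\dots,\hat\Delta_k)$; differentiating $j$ times in $U$ and using $(\partial_\theta-\tfrac12\partial_U^2)\Delta_k=0$ one gets $\partial_U^j\Delta_k(U,\theta)=\sigma^{-j}\,(\widehat{\partial\text{-data}})$ evaluated at $(X-U)/\sigma$. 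Substituting both scalings into \eqref{eq:hj}, the powers of $\sigma$ collect to $\sigma^{N-k}\cdot\sigma^{-j}$ inside a sum over $k$, but the $k$-dependence of $\sigma^{N-k}$ is absorbed because $\hat F_k$ is paired with a degree-$k$ polynomial in $U$; after a change of summation index (shifting $\Delta_k$ around $U$ against the argument of $F_k$) everything reorganizes into $\sigma^{N-j}\hat h_j((X-U)/\sigma)$, which is \eqref{eq:haffinv}.

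The main obstacle I anticipate is the bookkeeping in this last step: the two scaling laws (for $F_k$ and for $\partial_U^j\Delta_k$) must be combined so that the $U$-dependence genuinely reassembles into a single shift of argument, and this requires the compatibility between the binomial weights in $\mathcal S_k^{[u,\sigma]}$ and the way truncated tails $F_k$ interact with multiplication by powers of $(X-U)$. A cleaner route to sidestep much of this is to reinterpret $h_j(X;U,\theta)$ coordinate-free: from \eqref{eq:hj}, $h_j(X;U,\theta)$ is, up to the Vandermonde/triangular bookkeeping of \eqref{eq:L}, a row of the matrix product encoding $LP^{-1}$, and the affine invariance says $L$ and $P^{-1}$ both transform by the explicit triangular matrices $\mathcal S^{[U,\sigma]}_\bullet$, whose product is again such a matrix — so the scaling of $h_j$ follows from that of $F$ plus the known scaling \eqref{eq:scalef} of the reconstruction. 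I would present whichever of the two is shorter; in either case the homogeneity in $\rho$ and the affine invariance in $(U,\sigma)$ are the only structural inputs, and the standard-normal moments $\hat\Delta_l$ enter precisely as the ``seed'' data $\hat F$, $\hat h_j$ at $(0,1)$.
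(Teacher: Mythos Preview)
Your approach to \eqref{eq:fhat} is correct and matches the paper's: differentiate the affine-invariance relation with respect to the moments and read off the transformation law for the $c_j$'s, which is precisely the coefficient relation for the polynomial substitution $X\mapsto(X-U)/\sigma$.

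For \eqref{eq:haffinv}, however, there is a genuine error. Your claimed scaling law
\[
  F_k(X;U,\theta)=\sigma^{N-k}\hat F_k\!\left(\frac{X-U}{\sigma}\right)
\]
is \emph{false} for $k\ge1$ when $U\ne0$. Truncating the bottom $k{+}1$ coefficients of a polynomial does not commute with an affine substitution: e.g.\ for $N=1$ and $\hat F(Y)=Y^2+\hat aY+\hat b$ one gets $F_0(X)=X-2U+\hat a\sigma$ but $\sigma\hat F_0((X-U)/\sigma)=X-U+\hat a\sigma$. So the individual $F_k$'s do not scale nicely; the cancellation only happens \emph{after} summing against $\partial_U^j\Delta_k(U,\theta)$, and this cancellation is exactly the content that needs proof, not a bookkeeping afterthought. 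Your fallback ``coordinate-free'' route via $LP^{-1}$ is too vague to fill the gap.

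The paper proceeds differently: it does not attempt to scale the $F_k$'s separately. Instead it substitutes the already-proved relation \eqref{eq:cjaff} for the $c_j$'s directly into the double sum defining $h_j$, does the same for $\hat h_j((X-U)/\sigma)$, and compares coefficients of $X^k\hat c_l$. The matching reduces to the elementary but nontrivial identity
\[
  \binom{l}{m+k+1}=\sum_{s\ge0}\binom{s}{m}\binom{l-s-1}{k},
\]
proved by counting $(m{+}k{+}1)$-subsets of $\{1,\dots,l\}$ according to the position of their $(m{+}1)$-th element. This combinatorial identity is the real mechanism behind \eqref{eq:haffinv}; your outline does not identify it.
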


A proof of this proposition is given in subsection \ref{subsec:consaffinv}.
As a result, if the moment closure is affine invariant, the eigenvalues can be expressed as $\lambda_i(U,\theta)=\sigma \hat \lambda_i + U$ with $\hat \lambda_i:=\lambda_i(0,1)$.
Consequently, Proposition \ref{prop:target} holds if the relations
\begin{equation} \label{eq:goal2}
  \mathcal A^\beta_j := \sum_{i=0}^{N} \omega_i \hat h_j(\hat \lambda_i) \hat \lambda_i^\beta = 0
\end{equation}
are satisfied for $j=0,1,2$ and $\beta=0,\dots,N-3$.

Substituting the definition of $h_j=h_j(X;U,\theta)$ in (\ref{eq:hj}) into the expression of $\mathcal A^\beta_j$ in (\ref{eq:goal2}) and setting
\begin{equation}\label{eq:pk}
  p_k = \sum_{i=0}^{N} \omega_i \hat \lambda_i^k, \quad \text{for } k=0,1,\dots,
\end{equation}
we can rewrite (\ref{eq:goal2}) as
\begin{equation} \label{eq:calAdef}
  \mathcal A_j^\beta = \sum_{k\ge0,\ l\ge0} \hat c_{k+l+1} p_{k+\beta} \partial^j \Delta_l
  = \sum_{k\ge\beta,\ l\ge-\beta} p_k \hat c_{k+l+1} \partial^j \Delta_{l+\beta}.
\end{equation}
Here $\Delta_j:=\Delta_j(0,1)$ and $\partial^j\Delta_l:=\partial_U^j\Delta_l(U,1)|_{U=0}$.
A frequently-used property of $p_k$ is
\begin{equation} \label{eq:pkpro}
  \sum_{k=0}^\infty \hat c_k p_{k+\beta} = \sum_{i=0}^{N} \omega_i \hat F(\hat \lambda_i) \hat \lambda_i^\beta = 0 \quad
  \text{for} \quad \beta \ge 0.
\end{equation}

Next we show that the $3(N-2)$ relations $\mathcal A_j^\beta = 0$ in (\ref{eq:goal2}) can be further reduced to $N$ equations for the HyQMOM closure (\ref{eq:hyqmom}), which is a special case of the following result to be proved in subsection \ref{subsec:consaffinv}.

\begin{proposition} \label{prop:reduce}
  Suppose that the moment closure $M_{N+1}(\mathbf M)$ is a homogeneous function of order one and affine invariant, and satisfies $M_{N+1}(\Delta_0,\dots,\Delta_{N})=\Delta_{N+1}$. Then for $\mathbf M \in \mathcal E$, the $3(N-2)$ relations in (\ref{eq:goal2}) hold if and only if $\mathcal A_0^\beta = 0$ for $\beta=0,\dots,N-1$.
\end{proposition}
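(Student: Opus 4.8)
The plan is to replace the $3(N-2)$ quantities $\mathcal A_j^\beta$ by the single family $\{\mathcal A_0^\beta\}$ through a generating-function formula for the normalized polynomials $\hat h_j$, after which the asserted equivalence is pure index-counting. Throughout let $\langle\,\cdot\,\rangle$ denote the linear functional on $\mathbb R[Y]$ fixed by $\langle Y^k\rangle=\Delta_k$, the moments of the standard normal; I will use the Gaussian integration-by-parts rule $\langle P'(Y)\rangle=\langle YP(Y)\rangle$ and the recursion $\Delta_{k+1}=k\Delta_{k-1}$, which is the same structure that yields $\partial^1\Delta_k=k\Delta_{k-1}$ and $\partial^2\Delta_k=k(k-1)\Delta_{k-2}$.

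\emph{Step 1 (generating functions).} Expanding $\hat F_k(X)=\sum_{j\ge k+1}\hat c_jX^{j-k-1}$ and summing against $Y^k$ gives $\sum_k\hat F_k(X)Y^k=G(X,Y)$, where $G(X,Y):=(\hat F(X)-\hat F(Y))/(X-Y)$. Applying $\langle\,\cdot\,\rangle$ in $Y$ to $G$, $\partial_YG$, $\partial_Y^2G$ and comparing with (\ref{eq:hj}) gives $\hat h_0=\langle G\rangle$, $\hat h_1=\langle\partial_YG\rangle$, $\hat h_2=\langle\partial_Y^2G\rangle$, and then integration by parts (together with $Y\partial_YG=\partial_Y(YG)-G$ for the last) turns these into $\hat h_0=\langle G\rangle$, $\hat h_1=\langle YG\rangle$, $\hat h_2=\langle(Y^2-1)G\rangle$. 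Evaluating at an eigenvalue $\hat\lambda_i$, where $\hat F(\hat\lambda_i)=0$, one has $G(\hat\lambda_i,Y)=-\hat F(Y)/(\hat\lambda_i-Y)$, so introducing the polynomial
\[
  \Psi_\beta(Y):=\hat F(Y)\sum_{i}\frac{\omega_i\hat\lambda_i^\beta}{\hat\lambda_i-Y}=-\sum_i\omega_i\hat\lambda_i^\beta\prod_{k\ne i}(Y-\hat\lambda_k)\in\mathbb R[Y]
\]
we obtain $\mathcal A_0^\beta=-\langle\Psi_\beta\rangle$, $\mathcal A_1^\beta=-\langle Y\Psi_\beta\rangle$, $\mathcal A_2^\beta=-\langle(Y^2-1)\Psi_\beta\rangle$. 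From $Y/(\hat\lambda_i-Y)=\hat\lambda_i/(\hat\lambda_i-Y)-1$ one reads off the recursion $Y\Psi_\beta=\Psi_{\beta+1}-p_\beta\hat F$, hence $Y^2\Psi_\beta=\Psi_{\beta+2}-p_{\beta+1}\hat F-p_\beta Y\hat F$, with $p_\beta$ as in (\ref{eq:pk}).

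\emph{Step 2 (orthogonality of $\hat F$; the crux).} I claim $\langle\hat F\rangle=0$ and $\langle Y\hat F\rangle=0$. The first is Euler's identity for the degree-one homogeneous closure, $\sum_{k=0}^{N+1}M_kc_k=0$ (with $c_{N+1}=1$), read at the equilibrium state with $(\rho,U,\theta)=(1,0,1)$, where it becomes $\sum_k\hat c_k\Delta_k=0$. For the second I would first note that the three hypotheses make $M_{N+1}$ \emph{exact on} $\mathcal E$: applying the affine invariance (\ref{eq:affinv}) at the point $(\Delta_0,\dots,\Delta_N)$, together with the identity $\mathcal S_N^{[u,\sigma]}(\Delta_0,\dots,\Delta_N)=(\Delta_k(u,\sigma^2))_k$ and $M_{N+1}(\Delta_0,\dots,\Delta_N)=\Delta_{N+1}$, yields $M_{N+1}((\Delta_k(u,\sigma^2))_k)=\Delta_{N+1}(u,\sigma^2)$, and homogeneity upgrades this to $M_{N+1}(\mathbf M)=\rho\,\Delta_{N+1}(U,\theta)$ for $\mathbf M\in\mathcal E$. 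Differentiating this identity along $\mathcal E$ in the $U$-direction, using $\partial_UM_k=\rho\,k\Delta_{k-1}(U,\theta)$, $\partial_U\Delta_{N+1}=(N+1)\Delta_N$ and $c_k=-\partial M_{N+1}/\partial M_k$, gives $\sum_{k=0}^{N+1}c_k\,k\Delta_{k-1}(U,\theta)=0$ at equilibrium, which at $(0,1)$ is exactly $\sum_k\hat c_k\Delta_{k+1}=\langle Y\hat F\rangle=0$. (The $\theta$-derivative would similarly give $\langle Y^2\hat F\rangle=0$, i.e.\ $\hat F$ is orthogonal to all quadratics, but this is not needed below.) I expect this step to be the main obstacle, the delicate part being to justify differentiating the equilibrium identity and to check it produces precisely the coefficient relation wanted.

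\emph{Step 3 (reduction and bookkeeping).} Since $\langle\hat F\rangle=\langle Y\hat F\rangle=0$, the recursions of Step 1 give $\langle Y\Psi_\beta\rangle=\langle\Psi_{\beta+1}\rangle$ and $\langle Y^2\Psi_\beta\rangle=\langle\Psi_{\beta+2}\rangle$, hence the identities (valid for every $\beta\ge0$)
\[
  \mathcal A_1^\beta=\mathcal A_0^{\beta+1},\qquad \mathcal A_2^\beta=\mathcal A_0^{\beta+2}-\mathcal A_0^\beta .
\]
If $\mathcal A_0^\beta=0$ for $\beta=0,\dots,N-1$ then these force $\mathcal A_1^\beta=\mathcal A_2^\beta=0$ for $\beta=0,\dots,N-3$. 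Conversely, if $\mathcal A_j^\beta=0$ for all $j=0,1,2$ and $\beta=0,\dots,N-3$, then the $\mathcal A_0$- and $\mathcal A_1$-relations give $\mathcal A_0^\beta=0$ for $\beta=0,\dots,N-2$, and the $\mathcal A_2$-relation at $\beta=N-3$ gives $\mathcal A_0^{N-1}=\mathcal A_0^{N-3}=0$; since $N\ge3$ this is exactly the claimed equivalence.
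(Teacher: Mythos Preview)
Your proof is correct and reaches the same key identities as the paper, namely
\[
  \mathcal A_1^\beta=\mathcal A_0^{\beta+1},\qquad \mathcal A_2^\beta=\mathcal A_0^{\beta+2}-\mathcal A_0^\beta,
\]
together with the two vanishing conditions $\sum_k\hat c_k\Delta_k=0$ and $\sum_k\hat c_k\Delta_{k+1}=0$ coming from homogeneity and affine invariance respectively. The equivalence bookkeeping in your Step~3 is exactly what the paper leaves implicit after its ``It suffices to show\dots''.

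The route, however, is genuinely different in packaging. The paper works directly on the double sum (\ref{eq:calAdef}): it uses the pointwise identities $\partial\Delta_l=\Delta_{l+1}$ and $\partial^2\Delta_l=\partial\Delta_{l+1}-\Delta_l$, shifts indices, and peels off the extra term $p_\beta\sum_l\hat c_l\Delta_l$ (resp.\ $p_\beta\sum_{l\ge1}\hat c_l\Delta_{l+1}$) by hand. You instead introduce the divided-difference generating function $G(X,Y)=(\hat F(X)-\hat F(Y))/(X-Y)$ and the polynomial $\Psi_\beta(Y)=\hat F(Y)\sum_i\omega_i\hat\lambda_i^\beta/(\hat\lambda_i-Y)$, so that $\mathcal A_j^\beta$ becomes a single Gaussian bracket and the recursion $Y\Psi_\beta=\Psi_{\beta+1}-p_\beta\hat F$ does all the index shifting at once. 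Your Step~2 derives $\langle Y\hat F\rangle=0$ by first establishing exactness of the closure on $\mathcal E$ and then differentiating in $U$; the paper instead differentiates the affine-invariance identity (\ref{eq:affinv}) directly in $u$ at $u=0$, which gives the relation $(N+1)M_N=\sum_l(\partial M_{N+1}/\partial M_l)\,lM_{l-1}$ without passing through exactness on all of $\mathcal E$. Both derivations are valid and equivalent once evaluated at $(\Delta_0,\dots,\Delta_N)$. What your approach buys is a cleaner explanation of \emph{why} the two correction terms are precisely $\langle\hat F\rangle$ and $\langle Y\hat F\rangle$; what the paper's buys is that it never needs to name $G$ or $\Psi_\beta$ and stays entirely within finite sums.
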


\begin{remark} \label{rem:hyas}
  The HyQMOM closure (\ref{eq:hyqmom}) satisfies all the assumptions of this proposition with $N=2n$. Indeed, it is not difficult to see from (\ref{eq:ab}) and (\ref{eq:hyqmom}) that the HyQMOM closure $M_{2n+1}(\mathbf M)$ is homogeneous of order one and the affine invariance is due to Proposition \ref{prop:hyqafinv}. To show $M_{2n+1}(\Delta_0,\dots,\Delta_{2n}) = \Delta_{2n+1}$, we notice that the orthogonal polynomials induced by the moments $(\Delta_m)_{m=0}^{2n}$ are the scaled Hermite polynomials $\hat Q_k$ which are generated recursively as $\hat Q_{-1}=0$, $\hat Q_0=1$ and
  \[
    \hat Q_{k+1} = X \hat Q_k - k \hat Q_{k-1}
  \]
  for $k=0,\dots,n-1$ \cite{Gau2004}. This means $a_k = 0$ for $k\le n-1$ and $b_k = k$ for $k \le n$ in (\ref{eq:qrec}). Thus, the HyQMOM closure (\ref{eq:hyqmom}) gives $a_n=0$ and the resultant orthogonal polynomial of degree $n+1$ is just $\hat Q_{n+1}$. By Proposition \ref{prop:bij2} (taking $N=2n+1$) we conclude $M_{2n+1}(\Delta_0,\dots,\Delta_{2n}) = \Delta_{2n+1}$.
\end{remark}

Thanks to Proposition \ref{prop:reduce} and Remark \ref{rem:hyas}, the task remains to show $\mathcal A_0^{\beta}=0$ for $\beta=0,...,2n-1$ by choosing proper $p_k$ which are determined by $\omega_i$ as shown in (\ref{eq:pk}). In subsection \ref{subsec:pk}, we shall prove the following proposition.
\begin{proposition}\label{prop:pkchoice}
$\mathcal A_0^{\beta}=0$ for $\beta=0,...,2n-1$ provided that
\begin{equation} \label{eq:pksol}
  p_k = \left \{
  \begin{aligned}
    &\Delta_k,             \quad &&k=0,\dots,2n-1, \\
    &\Delta_{2n} + (n-1)!, \quad &&k=2n.
  \end{aligned}
 \right.
\end{equation}
\end{proposition}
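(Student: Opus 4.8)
The plan is to verify directly that the choice of $p_k$ in \eqref{eq:pksol} makes all the quantities $\mathcal A_0^\beta$ ($\beta=0,\dots,2n-1$) vanish, by relating the sequence $(p_k)$ to the moments of the standard normal distribution. Recall from \eqref{eq:calAdef} that with $j=0$
\[
  \mathcal A_0^\beta = \sum_{k\ge 0,\ l\ge 0} \hat c_{k+l+1}\, p_{k+\beta}\, \Delta_l ,
\]
where $\hat c_m$ are the coefficients of the normalized characteristic polynomial $\hat F = F(\cdot;0,1)$ and $\Delta_l = \Delta_l(0,1)$ are the standard normal moments. By Proposition \ref{prop:hypf} (with $U=0$, $\theta=1$), $\hat F$ factors as $\hat Q_n \hat R_{n+1}$ where $\hat Q_k$ are the scaled Hermite polynomials from Remark \ref{rem:hyas} and, since $a_k=0$ for $k\le n-1$ gives $\sigma=0$ hence $a_n=0$, we get $\hat R_{n+1} = X\hat Q_n - \tfrac{2n+1}{n}\, n\, \hat Q_{n-1} = X\hat Q_n - (2n+1)\hat Q_{n-1}$ (using $b_n=n$). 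The key point is that the first $2n-1$ entries of $(p_k)$ coincide with $\Delta_k$, i.e. with the normal moments that generate the Hermite polynomials, and the single perturbation at $k=2n$ is tuned to the leading coefficient of $\hat Q_n^2$.

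The main steps I would carry out are: \textbf{(1)} Introduce the linear functional $\langle\!\langle\, \cdot\, \rangle\!\rangle$ on polynomials defined by $\langle\!\langle X^k\rangle\!\rangle = p_k$; by \eqref{eq:pksol} this agrees with the standard normal functional $\langle X^k\rangle_{\mathcal N}\mapsto \Delta_k$ on $\mathbb R[X]_{2n-1}$ and differs only in the $X^{2n}$-coefficient, by exactly $(n-1)!$. \textbf{(2)} Observe that $\Delta_{2n} - \langle \hat Q_n^2\rangle_{\mathcal N}$-type identities show $(n-1)! = \langle \hat Q_n^2\rangle_{\mathcal N}/n!\cdot(\text{something})$; more precisely, since $\hat Q_n$ is monic of degree $n$ and $\langle \hat Q_k^2\rangle_{\mathcal N} = b_1 b_2\cdots b_k = k!$, the perturbation $(n-1)!$ is chosen so that $\langle\!\langle \hat Q_n \cdot p\rangle\!\rangle = 0$ for all $p\in\mathbb R[X]_{n-1}$ \emph{and} $\langle\!\langle \hat Q_n \cdot \hat Q_n \rangle\!\rangle$ takes a specific value making $\hat Q_0,\dots,\hat Q_n, \hat R_{n+1}$ orthogonal with respect to $\langle\!\langle\cdot\rangle\!\rangle$ on $\mathbb R[X]_{n+1}$. \textbf{(3)} Rewrite $\mathcal A_0^\beta = \sum_l \Delta_l \sum_k \hat c_{k+l+1} p_{k+\beta} = \langle\!\langle\, X^\beta\, \sum_l \Delta_l F_l(X)\, \rangle\!\rangle$ where $F_l$ are the polynomials in \eqref{eq:Fk}; using the factorization $\hat F = \hat Q_n \hat R_{n+1}$ and the definition of $F_l$, identify $\sum_l \Delta_l F_l(X)$ as essentially $\hat h_0(X)$, which by the structure of the problem should be expressible as a combination of $\hat Q_n$ and $\hat R_{n+1}$ (or their product with lower-degree polynomials). \textbf{(4)} Conclude that $\mathcal A_0^\beta = \langle\!\langle X^\beta\cdot(\text{multiple of }\hat Q_n \text{ or }\hat R_{n+1})\rangle\!\rangle$, and that this vanishes for $\beta \le 2n-1$ because $X^\beta$ times such a polynomial of degree $\ge n$ (times degree $n+1$, giving total degree up to $2n$) pairs to zero under $\langle\!\langle\cdot\rangle\!\rangle$ by the orthogonality arranged in step (2) — here the slack between $\deg = 2n$ and the fact that $\langle\!\langle\cdot\rangle\!\rangle$ only "sees" up to $X^{2n}$ with a single controlled defect is exactly what the $+(n-1)!$ correction absorbs.

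The main obstacle I anticipate is step (3)–(4): correctly identifying $\sum_l \Delta_l F_l(X)$ (equivalently $\hat h_0$) in terms of the orthogonal polynomials and pinning down precisely which products land in the "visible" range $\mathbb R[X]_{2n}$ where $\langle\!\langle\cdot\rangle\!\rangle$ deviates from the true normal functional. The bookkeeping of degrees — ensuring that every term contributing to $\mathcal A_0^\beta$ for $\beta\le 2n-1$ is either genuinely orthogonal under the normal functional (degree $\le 2n-1$ business, or orthogonality of Hermite polynomials) or else is exactly the $X^{2n}$ term whose coefficient the correction $(n-1)!$ was designed to kill — is where the argument has to be airtight. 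I would handle this by writing $\hat h_0$ explicitly via \eqref{eq:hj} with $(U,\theta)=(0,1)$, using the Hermite recursion from Remark \ref{rem:hyas}, and checking the single borderline case $\beta = 2n-1$ (which touches $p_{2n}$) separately and by hand.
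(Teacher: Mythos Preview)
Your framework in steps (1) and (3) is correct and worth keeping: writing $\mathcal A_0^\beta=\langle\!\langle X^\beta \hat h_0(X)\rangle\!\rangle$ with the functional $\langle\!\langle X^k\rangle\!\rangle=p_k$ is a clean reformulation, and the divided-difference identity $\hat h_0(X)=\big\langle\frac{\hat F(X)-\hat F(Y)}{X-Y}\big\rangle_Y$ (normal functional in $Y$) that underlies it is exactly the right object. This is a genuinely different and more conceptual route than the paper's, which proceeds by brute-force index manipulation in the coefficients $\mu_k,\nu_k$ of $\hat Q_n$ and $\hat R_{n+1}$, splitting the double sum $\sum_{k,l}\hat c_{k+l+1}p_{k+\beta}\Delta_l$ into several pieces and evaluating each via orthogonality of the Hermite polynomials.

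However, step (2) contains a concrete error that breaks step (4). You claim the perturbation $p_{2n}=\Delta_{2n}+(n-1)!$ is tuned so that $\hat Q_0,\dots,\hat Q_n,\hat R_{n+1}$ are orthogonal under $\langle\!\langle\cdot\rangle\!\rangle$. They are not: with $\hat R_{n+1}=X\hat Q_n-(2n+1)\hat Q_{n-1}$ one computes
\[
  \langle\!\langle \hat R_{n+1}\hat Q_{n-1}\rangle\!\rangle
  =\langle\!\langle X\hat Q_n\hat Q_{n-1}\rangle\!\rangle-(2n+1)(n-1)!
  =\big(n!+(n-1)!\big)-(2n+1)(n-1)!=-n!\neq0,
\]
since $X\hat Q_n\hat Q_{n-1}$ has degree $2n$ with leading coefficient $1$ and $\langle X\hat Q_n\hat Q_{n-1}\rangle_{\mathcal N}=n!$. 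The perturbation that \emph{would} make $\hat R_{n+1}$ orthogonal is $+(n+1)(n-1)!$, not $+(n-1)!$; the value $(n-1)!$ is instead a specific convex combination of the two ``nice'' choices $-n!$ and $+(n+1)(n-1)!$ (cf.\ the paper's construction of the positive weights $\omega_i$ right after the proposition). Because of this, your step (4) cannot go through by a pure orthogonality argument: once you write $\hat h_0=\hat Q_n(X)A(X)-(n+1)(n-1)!$ with $\deg A=n$ (which does follow from the divided-difference splitting), the term $\langle\!\langle X^\beta\hat Q_n A\rangle\!\rangle$ is \emph{not} zero and must be shown to equal $(n+1)(n-1)!\Delta_\beta$ for each $\beta\le 2n-1$. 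That identity is where the real content lies, and it does not reduce to the orthogonality you hoped for; you would need to identify $A(X)$ explicitly and track the reduction of $X^\beta A$ modulo $\hat R_{n+1}$, which is essentially the same amount of work as the paper's direct computation.
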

For $p_k$ specified in (\ref{eq:pksol}), we can show as in Proposition \ref{prop:hqrecons2} that there exist $\{ \omega_i>0 \}_{i=0}^{2n}$ such that
\begin{equation} \label{eq:wpos}
  \sum_{i=0}^{2n} \omega_i \hat \lambda_i^k = p_k,\quad k=0,\dots,2n.
\end{equation}
In this way, the $\omega_i$'s are uniquely determined and thereby other $p_k$ appeared in (\ref{eq:calAdef}) are also defined.

To show (\ref{eq:wpos}), we note that $\langle \hat{Q}_n^2\rangle=n!$ and repeat the proof of Proposition \ref{prop:hqrecons2} in section \ref{sec:hyp} to see that there exist $\{w_i'>0\}_{i=1}^{n}$ such that
\[
  \sum_{i=1}^n \omega_i' \hat\lambda_{2i-1}^k = \left \{
  \begin{aligned}
    &\Delta_k, \quad && k=0,\dots, 2n-1, \\
    &\Delta_{2n} - n!, \quad && k=2n,
  \end{aligned} \right.
\]
and there exist $\{ \omega_i''>0 \}_{i=0}^n$ such that
\[
  \sum_{i=1}^n \omega_i'' \hat\lambda_{2i}^k = \left \{
  \begin{aligned}
    &\Delta_k, \quad && k=0,\dots, 2n-1, \\
    &\Delta_{2n} + (n+1)(n-1)!, \quad && k=2n.
  \end{aligned} \right.
\]
With the last two relations and (\ref{eq:pksol}), it is straightforward to verify that
\[
  \sum_{i=1}^n \frac{n}{2n+1} \omega_i' \hat \lambda_{2i-1}^k + \sum_{i=1}^n \frac{n+1}{2n+1} \omega_i'' \hat \lambda_{2i}^k = p_k
\]
for $k=0,\dots,2n$, which clearly implies
\[
  \omega_{2i-1} = \frac{n}{2n+1}\omega_i'>0 \quad \text{and} \quad
  \omega_{2i} = \frac{n+1}{2n+1}\omega_i''>0,
\]
thanks to the uniqueness of $\{\omega_i\}_{i=0}^{2n}$.

This completes the verification of Proposition \ref{prop:target} and hence Condition (III) is verified. We close this subsection with the following remark.

\begin{remark}
When verifying Condition (III) for the EQMOM with Gaussian kernels in \cite{Huang2020}, the problem is also reduced to $\mathcal A_0^\beta = 0$ for $\beta=0,\dots,2n-1$. Unlike the HyQMOM where $p_k$ is specified, the strategy for the EQMOM is to directly set $\omega_i=1$ such that all $\mathcal A_0^\beta$'s are null. Both proofs rely heavily on the detailed property of the coefficient matrix $A(\mathbf M)$.
\end{remark}

\subsection{Affine invariance}
\label{subsec:hyafinv}

In this subsection, we prove Proposition \ref{prop:hyqafinv} on the affine invariance of the HyQMOM closure (\ref{eq:hyqmom}).

\begin{proof} [Proof of Proposition \ref{prop:hyqafinv}]
  The HyQMOM closure introduces orthogonal polynomials $Q_0,\dots,Q_{n+1}$. Denote by $\{u_i\}_{i=0}^n$ the roots of $Q_{n+1}$. Proposition \ref{prop:qmomrecons} states that $M_k=\sum_{i=0}^n w_iu_i^k$ for $k=0,\dots,2n+1$ with $w_i>0$.
  Thus, the map $(\bar{\mathbf M},\bar{M}_{2n+1})=(\bar M_k)_{k=0}^{2n+1}:=\mathcal S_{2n+1}^{[u,\sigma]}(\mathcal M(\mathbf M)) $ can be expressed as
  \[
    \bar M_k = \sum_{j=0}^k \binom{k}{j} \sigma^j \left( \sum_{i=0}^n w_i u_i^j \right) u^{k-j}
    = \sum_{i=0}^n w_i (\sigma u_i + u)^k,\quad k=0,...,2n+1.
  \]

  On the other hand, the moments $\bar{\mathbf M}$ induce an inner product with orthogonal polynomials $\bar Q_0,\dots,\bar Q_n$. We claim that
  \begin{equation} \label{eq:tiltqq}
    \bar Q_k(X) = \sigma^k Q_k \left( \frac{X-u}{\sigma} \right)
  \end{equation}
  for $k=0,\dots,n$. To see this, denote by $\hat Q_k$ the RHS of (\ref{eq:tiltqq}) and notice that $\mathbf M$ and $\bar{\mathbf M}$ are both generated by finitely-supported distributions. Then we have
  \[
    \langle \hat Q_k \hat Q_l \rangle_{\bar{\mathbf M}} = \sum_{i=0}^n w_i (\hat Q_k \hat Q_l)(\sigma u_i + u)
    = \sigma^{k+l} \sum_{i=0}^n w_i (Q_kQ_l)(u_i)
    = \sigma^{k+l} \langle Q_k Q_l \rangle_\mathbf M,
  \]
  which indicates that the monic polynomials $\hat Q_0,\dots,\hat Q_n$ are orthogonal with respect to $\bar{\mathbf M}$. Thus, we obtain $\bar Q_k = \hat Q_k$ for $k=0,\dots,n$.

  Now consider $\mathcal M(\bar{\mathbf M})$. Substituting the recursion (\ref{eq:qrec}) into (\ref{eq:tiltqq}), we derive
  \[
    \bar Q_{k+1} = (X-u-\sigma a_k(\mathbf M)) \bar Q_k - \sigma^2 b_k(\mathbf M) \bar Q_{k-1}
  \]
  for $k=0,\dots,n-1$. This is the recursion for $\{\bar Q_k\}$ with
  \[
    a_k(\bar{\mathbf M}) = \sigma a_k(\mathbf M) + u, \quad
    b_k(\bar{\mathbf M}) = \sigma^2 b_k(\mathbf M).
  \]
  Performing the HyQMOM closure (\ref{eq:ghyq}) for both $\mathbf M$ and $\bar{\mathbf M}$ yields
  \[
    a_n(\bar{\mathbf M}) = \sigma a_n(\mathbf M) + \gamma u.
  \]
  Thus, for $\bar{\mathbf M}$ the HyQMOM closure yields the highest-order orthogonal polynomial
  \begin{equation} \label{eq:tqnp1}
  \begin{split}
    \bar Q_{n+1} &= (X-a_n(\bar{\mathbf M})) \bar Q_n - b_n(\bar{\mathbf M})\bar Q_{n-1} \\
    &= \sigma^{n+1} Q_{n+1} \left( \frac{X-u}{\sigma} \right) + (1-\gamma) u\sigma^n Q_n \left( \frac{X-u}{\sigma} \right).
  \end{split}
\end{equation}

  The closure is affine invariant, namely, $\mathcal M(\bar{\mathbf M}) = \mathcal S_{2n+1}^{[u,\sigma]}((\mathbf M,M_{2n+1}))$, meaning that
\[
    M_{2n+1}(\bar{\mathbf M})=\bar{M}_{2n+1} = \sum_{i=0}^n w_i (\sigma u_i + u)^{2n+1},
  \]
  which is valid if and only if the roots of $\bar Q_{n+1}$ are just $(\sigma u_i + u)_{i=0}^{n}$. From (\ref{eq:tqnp1}) we see that this is the case if and only if $\gamma=1$, which completes the proof.
\end{proof}

\subsection{Consequences of the affine invariance} \label{subsec:consaffinv}
In this subsection, we prove Propositions \ref{prop:Faffinv} and \ref{prop:reduce} as consequences of the affine invariance.

\begin{proof}[Proof of Proposition \ref{prop:Faffinv}]
 For (\ref{eq:fhat}) it suffices to show that the coefficients of $X^j$ of the two polynomials therein are equal:
  \begin{equation} \label{eq:cjaff}
    c_j(U,\theta) = \sigma^{N+1} \sum_{k=0}^\infty \binom{k}{j} \frac{\hat c_k}{\sigma^k} (-U)^{k-j},
  \end{equation}
  where the summation over $k$ actually ranges from $j$ to $N+1$.
 Note that the coefficients $c_j(U,\theta)$ and $\hat c_j$ are
  \begin{equation} \label{eq:cjeq}
    c_j(U,\theta) = -\diffp{{M_{N+1}}}{{M_j}} \Big|_{\mathbf M=(\Delta_m(U,\theta))_{m=0}^N}, \quad
    \hat c_j = -\diffp{{M_{N+1}}}{{M_j}} \Big|_{\mathbf M=(\Delta_m)_{m=0}^N}
  \end{equation}
  for $j=0,\dots,N$, $c_{N+1}(U,\theta)=\hat c_{N+1} = 1$, and $c_j(U,\theta)=\hat c_j = 0$ for $j\ge N+2$.

  To show (\ref{eq:cjaff}), we deduce from (\ref{eq:Sk}) that
  \[
    \mathcal S_k^{[-\frac{U}{\sigma}, \frac{1}{\sigma}]} ((\Delta_0(U,\theta),\dots,\Delta_k(U,\theta))) = (\Delta_0,\dots,\Delta_k)
  \]
  as $\Delta_k(U,\theta)$ and $\Delta_k$ are the moments of $\phi_\sigma (\xi-U)$ and $\phi(\xi)$, respectively (see (\ref{eq:standard_mom})).
  Moreover, for $\mathcal S_k^{[-\frac{U}{\sigma}, \frac{1}{\sigma}]}: (M_0,\dots,M_k) \mapsto (M_0',\dots,M_k')$ we have
  \begin{equation} \label{eq:Sk2}
    M_k' = \sigma^{-k} \sum_{j=0}^k \binom{k}{j} M_j (-U)^{k-j}.
  \end{equation}

  With these preparations, we differentiate the last component of the composite function
  \[
    \mathcal M \circ \mathcal S_N^{[-\frac{U}{\sigma}, \frac{1}{\sigma}]} = \mathcal S_{N+1}^{[-\frac{U}{\sigma}, \frac{1}{\sigma}]} \circ \mathcal M
  \]
 with respect to $M_j$ and evaluate the derivative at $\mathbf M= (\Delta_k(U,\theta))_{k=0}^N$ to obtain
  \[
  \begin{aligned}
    \sum_{k=0}^N (-\hat c_k) \left( \diffp{{M_k'}}{{M_j}} \right)
    = \diffp{{M_{N+1}'}}{{M_j}} + \left( \diffp{{M_{N+1}'}}{{M_{N+1}}} \right)
    (-c_j(U,\theta)).
  \end{aligned}
  \]
  Here (\ref{eq:cjeq}) and the chain rule have been used. Substituting $\partial M_k' / \partial M_j = \sigma^{-k} \binom{k}{j} (-U)^{k-j}$ from (\ref{eq:Sk2}) into the above expression immediately yields (\ref{eq:cjaff}). Hence, (\ref{eq:fhat}) is verified.

As to (\ref{eq:haffinv}),
we first show that
  \begin{equation} \label{eq:pUDeltak}
    \partial^j \Delta_k = \sigma^{j-k} \sum_{l=0}^k \binom{k}{l} (-U)^{k-l} \partial_U^j \Delta_l(U,\theta).
  \end{equation}
  In fact, we know from \cite{Huang2020} that
  \begin{equation} \label{eq:Deltader}
    \partial^j_U \Delta_k(U,\theta) = \frac{k!}{(k-j)!} \Delta_{k-j}(U,\theta)
  \end{equation}
  for $0\le j \le k$. This particularly implies
  \[
  \begin{aligned}
    \partial^j \Delta_k &= \int_\mathbb R \left( \diffp[j]{}{\xi} \xi^k \right) \phi(\xi)d\xi
    = \sigma^{j-k} \int_\mathbb R \left( \diffp[j]{}{\xi} (\xi-U)^k \right) \phi_\sigma (\xi-U) d\xi \\
    &= \sigma^{j-k} \sum_{l=0}^k \binom{k}{l} (-U)^{k-l} \int_\mathbb R \left( \diffp[j]{}{\xi} \xi^l \right) \phi_\sigma(\xi-U)d\xi  \\
    &= \sigma^{j-k} \sum_{l=0}^k \binom{k}{l} (-U)^{k-l} \partial_U^j \Delta_l(U,\theta),
  \end{aligned}
  \]
  where the last equality is due to (\ref{eq:Deltader}).

 Next we deduce from (\ref{eq:cjaff}) that
\[
  \begin{aligned}
    h_j(X;U,\theta) &= \sum_{k=0}^\infty X^k \sum_{l,m=0}^\infty \binom{m}{l+k+1} \hat c_m \sigma^{N+1-m} (-U)^{m-l-k-1} \partial_U^j \Delta_l(U,\theta) \\
    &= \sum_{k,l=0}^\infty X^k \hat c_l \sigma^{N+1-l} \sum_{m=0}^\infty \binom{l}{m+k+1} (-U)^{l-m-k-1} \partial_U^j \Delta_m(U,\theta).
  \end{aligned}
  \]
On the other hand, the RHS of (\ref{eq:haffinv}) reads as
  \[
  \begin{aligned}
    \hat h_j\left( \frac{X-U}{\sigma} \right)
    &= \sum_{k=0}^\infty \left(\frac{X-U}{\sigma}\right)^k \sum_{l=0}^\infty \hat c_{l+k+1} \partial^j \Delta_l \\
    &= \sum_{k=0}^\infty X^k \sum_{s,l=0}^\infty \sigma^{-s} \binom{s}{k} (-U)^{s-k} \hat c_{l+s+1} \partial^j \Delta_l \\
    &= \sum_{k,l=0}^\infty X^k \hat c_l \sum_{s=0}^\infty \sigma^{s+1-l} \binom{l-s-1}{k}(-U)^{l-s-1-k} \partial^j \Delta_s.
  \end{aligned}
  \]
  Here the second equality follows from expanding $(X-U)^k$ with a new dummy index $s$ and then exchanging the dummy indices $k,s$. The third equality is derived by introducing new dummy indices $l \leftarrow l+s+1$ and $s \leftarrow l$. Note that the summation over $s$ vanishes when $l=0$.
By using (\ref{eq:pUDeltak}), the coefficient of $X^k \hat c_l$ in the last expression can be written as
  \[
  \begin{aligned}
    &\sum_{s=0}^\infty \sigma^{j+1-l} \binom{l-s-1}{k}(-U)^{l-s-1-k} \sum_{m=0}^\infty \binom{s}{m} (-U)^{s-m} \partial_U^j \Delta_m(U,\theta) \\
    =& \sigma^{j+1-l} \sum_{m=0}^\infty (-U)^{l-m-1-k} \partial_U^j\Delta_m(U,\theta) \sum_{s=0}^\infty \binom{s}{m}\binom{l-s-1}{k}.
  \end{aligned}
  \]

Comparing this and the coefficient of $X^k \hat c_l$ in $h_j(X;U,\theta)$ above, it suffices to show the following interesting and elementary identity
\[
    \binom{l}{m+k+1} = \sum_{s=0}^\infty \binom{s}{m} \binom{l-s-1}{k}
\]
for nonnegative integers $k,l$ and $m$. Both sides of this identity are null if $m+k+1>l$ and then $s\ge m$ on the right-hand side leads to $l-s-1\le l-m-1<k$.
  For $m+k+1 \le l$, we consider the set of multiple indices
  \[
    A = \left\{\alpha=(\alpha_i\in\mathbb N)_{i=1}^{k+m+1}|1\le\alpha_1<\cdots<\alpha_{k+m+1}\le l \right\}.
  \]
  The number $|A|$ of elements in the set $A$ can be computed in two different ways, leading to
  \[
  \begin{aligned}
    \binom{l}{m+k+1} = |A| &= \sum_{s=0}^\infty \left |\{ \alpha\in A: \ \alpha_{m+1}=s+1 \} \right| \\
    &= \sum_{s=0}^\infty \binom{s}{m} \binom{l-s-1}{k}.
  \end{aligned}
  \]
 Hence, the proof is complete.
\end{proof}

\begin{proof}[Proof of Proposition \ref{prop:reduce}]
  It suffices to show that
  \begin{equation} \label{eq:A012}
    \mathcal A_1^\beta = \mathcal A_0^{\beta+1} + p_\beta \sum_{l=0}^\infty \hat c_l \Delta_l, \quad
    \mathcal A_2^\beta = -\mathcal A_0^\beta + \mathcal A_1^{\beta+1} + p_\beta \sum_{l=1}^\infty \hat c_l \Delta_{l+1}
  \end{equation}
  with
  \begin{equation} \label{eq:zero}
    \sum_{l\ge0} \hat c_l \Delta_l = \sum_{l\ge 1} \hat c_l \Delta_{l+1} = 0
  \end{equation}
  for the moment closure satisfying the assumptions in the proposition. For (\ref{eq:A012}), we first show that
  \begin{equation} \label{eq:pDl01}
    \partial \Delta_l = \Delta_{l+1}, \quad
    \partial^2 \Delta_l = \partial \Delta_{l+1} - \Delta_l.
  \end{equation}
  This follows from the facts $\phi'(\xi)=-\xi\phi(\xi)$, $\phi''(\xi)=(\xi^2-1)\phi(\xi)$ and the integration by parts:
  \[
    \partial \Delta_l = \int_\mathbb R \left( \diffp{}{\xi} \xi^l \right) \phi(\xi) d\xi
    = -\int_\mathbb R \xi^l \phi'(\xi) d\xi = \Delta_{l+1}.
  \]
  The second equality of (\ref{eq:pDl01}) can be verified in a similar manner.
  Then we compute from (\ref{eq:calAdef}) that
  \[
  \begin{aligned}
    \mathcal A_1^\beta &= \sum_{k,l\ge 0} \hat c_{k+l+1} p_{k+\beta} \Delta_{l+1} = \sum_{k\ge -1, \ l\ge 1} \hat c_{k+l+1} p_{k+\beta+1} \Delta_l \\
    &= \mathcal A_0^{\beta+1} + p_\beta\sum_{l\ge1} \hat c_l \Delta_l - \sum_{k\ge 0} \hat c_{k+1} p_{k+\beta+1}
    = \mathcal A_0^{\beta+1} + p_\beta\sum_{l\ge0} \hat c_l \Delta_l,
  \end{aligned}
  \]
  where (\ref{eq:pkpro}) is used in the last step to yield $\sum_{k\ge 0} \hat c_{k+1} p_{k+\beta+1} = - \hat c_0 p_\beta$. The $\mathcal A_2^\beta$-equality in (\ref{eq:A012}) can be derived in a similar manner.

  It remains to verify (\ref{eq:zero}). Since the closure $M_{N+1}(\mathbf M)$ is a homogeneous function of order one, we have
  \[
    M_{N+1} - \sum_{l=0}^N \diffp{{M_{N+1}}}{{M_l}} M_l = 0.
  \]
  Take $\mathbf M=(\Delta_0,\dots,\Delta_N)$ and hence $M_{N+1}=\Delta_{N+1}$ by the assumption. With the definition of $\hat c_l$ in (\ref{eq:cjeq}), we derive $\sum_{l \ge 0} \hat c_l \Delta_l = 0$ immediately.
  The rest part of (\ref{eq:zero}) needs affine invariance of the closure. Set $\sigma=1$ in (\ref{eq:affinv}) and write the $(N+1)$th component as
  \[
    \sum_{j=0}^{N+1}\binom{N+1}{j} M_j u^{N+1-j } = M_{N+1} \circ \mathcal S_{N}^{[u,1]}(\mathbf M).
  \]
  Applying $d/du |_{u=0}$ to the both sides and using the explicit form of $\mathcal S_N^{[u,1]}$ in (\ref{eq:Sk}), we derive
  \[
    (N+1)M_N = \sum_{l=1}^N \diffp{{M_{N+1}}}{{M_l}} \cdot l M_{l-1}.
  \]
  Then the equality $\sum_{l\ge 1}\hat c_l \Delta_{l+1}=0$ follows after assigning $M_k = \Delta_k$ for $k=0,\dots,N+1$ and noticing $l\Delta_{l-1}=\Delta_{l+1}$ (see, for example, Lemma 4.1(a) in \cite{Huang2020}). This justifies (\ref{eq:zero}) and completes the proof.
\end{proof}

\subsection{Proof of Proposition \ref{prop:pkchoice}} \label{subsec:pk}

This subsection is devoted to proving $\mathcal A_0^\beta = 0$ in (\ref{eq:calAdef}) for $\beta=0,\dots,2n-1$ with the $p_k$'s specified in (\ref{eq:pksol}). In what follows the bracket $\langle \cdot \rangle$ is induced by $\mathbf M=(\Delta_m)_{m=0}^{2n}$ on $\mathbb R[X]_{2n}$.

Recall $\hat F=\hat Q_n \hat R_{n+1}$ and write
\[
  \hat Q_n = \sum_{k\ge0}\mu_k X^k, \quad
  \hat R_{n+1} = \sum_{k\ge0}\nu_k X^k
\]
with $\mu_k=0$ for $k<0$ or $k\ge n+1$ and $\nu_k=0$ for $k<0$ or $k\ge n+2$. Then we have
\[
  \hat c_k = \sum_{l=0}^k \mu_l \nu_{k-l}.
\]
Set $\Delta_k=0$ for $k<0$. We compute $\mathcal A_0^\beta$ as follows.

\textbf{Step I}. For $\beta=0$, it is clear from (\ref{eq:calAdef}) that
\[
  \mathcal A_0^0 = \sum_{k\ge0,\ l\ge0} \hat c_{k+l+1} \Delta_k \Delta_l + (n-1)!.
\]
Then a direct calculation gives
\[
\begin{aligned}
  \sum_{k\ge0,\ l\ge0} \hat c_{k+l+1} \Delta_k \Delta_l &= \sum_{m\ge0,\ l\ge0} \hat c_{m+1}\Delta_{m-l}\Delta_l \\
  &= \sum_{m\ge0,\ l\ge0} \left( \sum_{0\le k\le l} + \sum_{k\ge l+1} \right) \mu_k \nu_{m+1-k}\Delta_{m-l}\Delta_l:= \text{I}+\text{II}.
\end{aligned}
\]
For the first part, we have
\[
\begin{aligned}
  \text{I}&=\sum_{m\ge0,\ k\ge0} \mu_k \nu_{m+1-k} \sum_{l\ge k} \Delta_{m-l}\Delta_l
  =\sum_{m\ge0,\ k\ge0} \mu_{m+1-k} \nu_{k} \sum_{l=0}^{k-1}\Delta_l \Delta_{m-l} \\
  &= \sum_{k\ge0}\nu_k\sum_{l=0}^{k-1}\Delta_l\sum_{m\ge0}\mu_{m+1-k}\Delta_{m-l}
  = \sum_{k\ge 0} \nu_k \sum_{l=0}^{k-1} \Delta_l \langle X^{k-l-1} \hat Q_n \rangle = n!.
\end{aligned}
\]
Here the second equality is derived after a change of variables $k\leftarrow m+1-k$ and $l\leftarrow m-l$. The last equality holds as the summand is nonzero only when $k=n+1$ and $l=0$ thanks to the orthogonality.

Similarly, we have
\[
\begin{aligned}
  \text{II} &= \sum_{l\ge0}\sum_{k\ge l+1} \mu_k\Delta_l \sum_{m\ge0}\nu_{m+1-k}\Delta_{m-l} \\
  &= \sum_{l\ge0}\sum_{k\ge l+1} \mu_k\Delta_l \langle X^{k-l-1} \hat R_{n+1}\rangle = -(n+1)(n-1)!.
\end{aligned}
\]
The last equality holds because $\hat R_{n+1}=\hat Q_{n+1} - (n+1)\hat Q_{n-1}$ and thus the summand is nonzero only when $k=n$ and $l=0$, thanks again to the orthogonality. Therefore, $\mathcal A_0^0=0$ follows immediately.

\textbf{Step II}. For $\beta =1, \dots ,2n-1$, $\mathcal A_0^\beta$ in (\ref{eq:calAdef}) contains $p_{2n+1},\dots,p_{2n+\beta}$ which need to be handled. For this purpose, we use a variant of (\ref{eq:pkpro}):
\[
  \sum_{k\ge-l-1} \hat c_{k+l+1} p_k = 0 \quad \text{for  } l\le -1
\]
to convert $\mathcal A_0^\beta$ in (\ref{eq:calAdef}) to
\begin{equation} \label{eq:rt1}
\begin{split}
  \mathcal A_0^\beta &= \sum_{k\ge\beta}\sum_{l\ge-\beta} p_k \hat c_{k+l+1} \Delta_{l+\beta}
  = \left( \sum_{\substack{k\ge\beta\\ l\ge0}} + \sum_{\substack{k\ge\beta \\ -\beta\le l\le -1}} \right)p_k \hat c_{k+l+1} \Delta_{l+\beta} \\
  &= \left( \sum_{\substack{k\ge\beta\\ l\ge0}} - \sum_{\substack{-l-1\le k\le\beta-1 \\ -\beta\le l\le -1}} \right)p_k \hat c_{k+l+1} \Delta_{l+\beta}
  :=A_0^\beta - B_0^\beta.
\end{split}
\end{equation}
Now $A_0^\beta$ and $B_0^\beta$ only include nonzero summands containing $p_0,\dots,p_{2n}$.

With $p_k$ given in (\ref{eq:pksol}), we compute
\begin{equation} \label{eq:rt2}
  \begin{split}
      A_0^\beta &- (n-1)!\Delta_\beta = \sum_{k\ge\beta,\ l\ge0} \hat c_{k+l+1} \Delta_k \Delta_{l+\beta} = \sum_{m\ge\beta} \sum_{0\le l\le m-\beta} \hat c_{m+1}\Delta_{m-l}\Delta_{l+\beta} \\
    &=  \sum_{m\ge\beta} \sum_{0\le l\le m-\beta} \left( \sum_{0\le k\le l} + \sum_{k\ge l+1} \right) \mu_k \nu_{m+1-k}\Delta_{m-l}\Delta_{l+\beta}:= \text{I}_A+\text{II}_A.
  \end{split}
\end{equation}
Similar to $\beta=0$ in \textbf{Step I}, we obtain
\begin{equation} \label{eq:rt3}
\begin{split}
  \text{I}_A&=\sum_{m\ge\beta}\sum_{k=0}^{m-\beta} \mu_k \nu_{m+1-k} \sum_{l=k}^{m-\beta} \Delta_{m-l}\Delta_{l+\beta}
  =\sum_{m\ge0}\sum_{k=\beta+1}^{m+1} \mu_{m+1-k} \nu_k \sum_{l=0}^{k-1-\beta}\Delta_{l+\beta} \Delta_{m-l} \\
  &= \sum_{k\ge0}\nu_k\sum_{l=0}^{k-1-\beta}\Delta_{l+\beta} \sum_{m\ge k-1}\mu_{m+1-k}\Delta_{m-l} \\
  &= \sum_{k\ge 0} \nu_k \sum_{l=0}^{k-1-\beta} \Delta_{l+\beta} \langle X^{k-l-1} \hat Q_n \rangle =
  \left \{
    \begin{aligned}
      &n!\Delta_\beta, \quad &&\text{for} \quad \beta \le n, \\
      &0,              \quad &&\text{for} \quad \beta \ge n+1.
    \end{aligned}
  \right.
\end{split}
\end{equation}
Here the second equality is derived after a change of variables $k\leftarrow m+1-k$ and $l\leftarrow m-\beta-l$. The last equality holds as the summand is nonzero only when $\beta \le n$, $k=n+1$ and $l=0$ thanks to the orthogonality.
Moreover, it is seen that
\begin{equation} \label{eq:rt4}
\begin{split}
  \text{II}_A &= \sum_{l\ge0} \Delta_{l+\beta} \sum_{k\ge l+1}\mu_k \sum_{m\ge l+\beta} \nu_{m+1-k}\Delta_{m-l} \\
  &= \sum_{l\ge0} \Delta_{l+\beta} \sum_{k\ge l+1}\mu_k \left( \langle X^{k-l-1}\hat R_{n+1} \rangle - \sum_{m=k-1}^{l+\beta-1}\nu_{m+1-k}\Delta_{m-l} \right)\\
  &= -(n+1)(n-1)!\Delta_\beta - \tilde A
\end{split}
\end{equation}
with
\[
\begin{aligned}
  \tilde A :&= \sum_{l\ge0} \sum_{k\ge l+1} \sum_{m=k-1}^{l+\beta-1} \mu_k \nu_{m+1-k} \Delta_{m-l} \Delta_{l+\beta} \\
  &= \sum_{m\ge0} \sum_{l=\max(0,m+1-\beta)}^m \sum_{k=l+1}^{m+1} \mu_k \nu_{m+1-k} \Delta_{m-l} \Delta_{l+\beta} \\
  &= \sum_{m\ge 0} \sum_{l=-\beta}^{\min(m-\beta,-1)} \sum_{k=0}^{l+\beta} \mu_{m+1-k} \nu_k \Delta_{l+\beta} \Delta_{m-l} \\
  &= \sum_{k=0}^{\beta-1} \nu_k \sum_{l=k-\beta}^{-1} \Delta_{l+\beta} \sum_{m=l+\beta}^\infty \mu_{m+1-k}\Delta_{m-l}.
\end{aligned}
\]
Here we perform a change of variables $k\leftarrow m+1-k$ and $l\leftarrow m-\beta-l$ in the third equality.

Next we treat
\[
\begin{aligned}
  B_0^\beta &= \sum_{l=-\beta}^{-1}\sum_{k=-1-l}^{\beta-1}\hat c_{k+l+1}\Delta_k\Delta_{l+\beta}
  =\sum_{m=-1}^{\beta-2}\sum_{l=m+1-\beta}^{-1}\hat c_{m+1} \Delta_{m-l}\Delta_{l+\beta} \\
  &= \sum_{m=-1}^{\beta-2}\sum_{l=m+1-\beta}^{-1} \sum_{k=0}^{m+1} \mu_{m+1-k}\nu_k \Delta_{m-l}\Delta_{l+\beta} \\
  &= \sum_{k=0}^{\beta-1} \nu_k \sum_{l=k-\beta}^{-1} \Delta_{l+\beta} \sum_{m=k-1}^{l+\beta-1} \mu_{m+1-k} \Delta_{m-l},
\end{aligned}
\]
which implies that
\begin{equation} \label{eq:rt5}
  \tilde A + B_0^\beta = \sum_{k=0}^{\beta-1} \nu_k \sum_{l=k-\beta}^{-1} \Delta_{l+\beta} \langle X^{k-l-1} \hat Q_n \rangle =
  \left \{
    \begin{aligned}
      0, \quad &\text{for} \quad \beta \le n, \\
      -n!\Delta_\beta, \quad &\text{for} \quad \beta \ge n+1.
    \end{aligned}
  \right.
\end{equation}
Indeed, for $\beta \le n$, all the summands are zero as $k-l-1<\beta\le n$ holds. Then, for $n+1\le \beta \le 2n-1$, it is possible to take $l=k-1-n \in [k-\beta,-1]$ if $k\le n\le \beta-1$. This results in
\[
  \tilde A + B_0^\beta = n!\sum_{k=0}^n \nu_k \Delta_{k-1-n+\beta} = n! \left(\langle X^{\beta-n-1} \hat R_{n+1}\rangle - \Delta_\beta \right) = -n!\Delta_\beta,
\]
since $\beta-n-1\le n-2$.

Now we collect the above results in (\ref{eq:rt1})-(\ref{eq:rt5}) to see that
\[
  \mathcal A_0^\beta = (n-1)!\Delta_\beta + \text{I}_A - (n+1)(n-1)!\Delta_\beta - (\tilde A + B_0^\beta) = 0
\]
for both $\beta \le n$ and $\beta \ge n+1$. This proves Proposition \ref{prop:pkchoice}.

\section{Conclusions} \label{sec:concld}

This paper presents a purely analytic proof of the hyperbolicity of a quadrature method of moments (called HyQMOM) for the one-dimensional BGK equation. The method takes advantage of the orthogonal polynomials associated with realizable moments and infers unclosed terms by constructing higher-order orthogonal polynomials.
Our proof is based on a factorization of the characteristic polynomial for the resultant first-order PDE and a polynomial-induced closure technique.
As a byproduct, a class of numerical schemes for the HyQMOM system is shown to be realizability preserving under CFL-type conditions.

Furthermore, we show that the system preserves the dissipative property of the kinetic equation by verifying the structural stability condition in section \ref{subsec:ssc}. The proof involves seeking positive solutions to an overdetermined system of algebraic equations. It uses the newly introduced affine invariance and homogeneity of the HyQMOM and heavily relies on the theory of orthogonal polynomials associated with realizable moments, in particular, the moments of the standard normal distribution.

Finally, the developed ideas can be used to analyze other quadrature-based moment methods, including different variants of the EQMOM.

\section*{Acknowledgments}
The authors are grateful to Prof. Ruo Li at Peking University and Prof. Kailiang Wu at SUSTECH for insightful discussions.

\section*{Funding sources}
This work was supported by the National Key Research and Development Program of China (Grant no. 2021YFA0719200) and the National Natural Science Foundation of China (Grant no. 12071246).

\section*{Declarations of interest}
None.

\appendix

\section{Appendix}

This appendx is devoted to the proofs of Proposition \ref{prop:afinvold} and Proposition \ref{prop:qeqhomo}.

\begin{proof} [Proof of Proposition \ref{prop:afinvold}]
  \textbf{(i)}. As in section \ref{subsec:hyafinv}, we can show that $\bar{\mathbf M}'=(\bar M_k)_{k=0}^{2n-1}:=\mathcal S_{2n-1}^{[u,\sigma]}(\mathbf M')$ is expressed as $\bar M_k = \sum_{i=1}^{n} w_i (\sigma u_i + u)^k$ for $k=0,...,2n-1$.
  Given $\bar{\mathbf M}'$, the QMOM closure reads as
  \[
    M_{2n}(\bar{\mathbf M}') = \sum_{i=1}^n w_i (\sigma u_i + u)^{2n} = \sum_{j=0}^{2n} \binom{2n}{j} \sigma^j M_j u^{2n-j}=\bar{M}_{2n},
  \]
where $\bar{M}_{2n}$ is just the last component of $\mathcal S_{2n}^{[u,\sigma]}((\mathbf M',M_{2n}))$. This proves the affine invariance of the QMOM.

  \textbf{(ii)}. For the EQMOM, $\mathbf M\in\mathbb R^{2n+1}$ is realized by the reconstructed distribution
  \[
    f(\xi;W) = \sum_{i=1}^n \frac{w_i}{\sigma} \calk \left( \frac{\xi-u_i}{\sigma} \right)
  \]
  with $W=(w_i,u_i,\sigma)_{i=1}^n$ ($w_i>0$ and $\sigma>0$) uniquely determined by $\mathbf M$.
  By the definition of the EQMOM closure and (\ref{eq:scalef}), it is obvious that the moments $\mathcal S_{2n+1}^{[u,\sigma']} \circ \mathcal M(\mathbf M)$ are realized by the shifted and rescaled distribution
  \[
    \frac{1}{\sigma'} f\left( \frac{\xi-u}{\sigma'}; W \right) = f(\xi;W')
  \]
  with $W'=(w_i,\sigma'u_i+u,\sigma\sigma')_{i=1}^n$ for any $u\in\mathbb R$ and $\sigma'>0$.
  On the other hand, the same argument implies that $\mathcal M \circ \mathcal S_{2n}^{[u,\sigma']}(\mathbf M)$ is realized by $f(\xi;W')$.
  Hence they are equal and the proof is completed.
\end{proof}

\begin{proof}[Proof of Proposition \ref{prop:qeqhomo}]
  \textbf{(i)}. For the QMOM, we know from Theorem 2.3 of \cite{Huang2020} that the characteristic polynomial is $F(X;\mathbf M')=Q_n^2(X;\mathbf M')$. Taking $G(X;\mathbf M')=F(X;\mathbf M')$, we immediately see from $\langle Q_n^2\rangle_{\mathbf M}=0$ that the QMOM (\ref{eq:qm2}) is of the form (\ref{eq:clos}) with $N=2n-1$.

  \textbf{(ii)}. For the EQMOM with kernel (\ref{eq:eqker}), we assume that all moments of the univariate function $\calk(\xi)$ are finite. Moerover, we refer to Remark 2.1 of \cite{ZHY2023} and may as well assume $\frkm_0=\frkm_2=1$ and $\frkm_1=0$.
  Under these assumptions, it is shown in \cite{ZHY2023} (Eq.(4.3) and Proposition 4.1 therein) that the characteristic polynomial $F(X;\mathbf M)=\mathcal D_\sigma g$, where $g\in\mathbb R[X]_{2n+1}$ and the operator $\mathcal D_\sigma$ are
  \[
    g = (X-\xi_1)^2\cdots (X-\xi_n)^2 (X-\tilde u)\quad \text{and} \quad\mathcal D_\sigma = \sum_{k=0}^\infty \mathfrak b_k \sigma^k \partial^k
  \]
  with $\mathfrak b_k$ constructed iteratively as $\mathfrak b_0=1$ and
  \begin{equation} \label{eq:bdef}
    \mathfrak b_k = -\sum_{j=1}^k \frac{\frkm_j}{j!}\mathfrak b_{k-j}
  \end{equation}
  for $k\ge 1$. Explicit forms of $\tilde u=\tilde u(\mathbf M)$ can be found in \cite{ZHY2023} (for general $\calk(\xi)$ and $n=2$) and \cite{Huang2020} (for the Gaussian kernel $\calk(\xi)=\phi(\xi)$ and all $n\ge 1$).

  Taking $G(X;\mathbf M)=F(X;\mathbf M)$, we will show that the EQMOM closure is just $M_{2n+1}=\langle X^{2n+1}-G \rangle_\mathbf M$, which is of the form (\ref{eq:clos}) with $N=2n$.
For this purpose, we compute
\[
  \begin{aligned}
    \int_{\mathbb R} \frac{1}{\sigma} \calk \left( \frac{\xi-X}{\sigma} \right) \mathcal D_\sigma \xi^j d\xi
    &= \sum_{k=0}^j \frac{j!}{(j-k)!}\mathfrak b_k \sigma^k \int_{\mathbb R} \frac{1}{\sigma} \calk \left( \frac{\xi-X}{\sigma} \right) \xi^{j-k} d\xi \\
    &= \sum_{k=0}^j \frac{j!}{(j-k)!}\mathfrak b_k \sigma^k \int_{\mathbb R} \calk(\xi) (X+\sigma \xi)^{j-k} d\xi \\
    &= \sum_{k=0}^j \sum_{l=0}^{j-k} \frac{j!}{l!(j-k-l)!} \mathfrak b_k \sigma^{k+l} X^{j-k-l} \frkm_l \\
    &= \sum_{s=0}^j \frac{j!}{(j-s)!} \sigma^s X^{j-s} \left( \sum_{l=0}^s \frac{\frkm_l}{l!} \mathfrak b_{s-l} \right)=X^j,
  \end{aligned}
  \]
where the last step follows from the recursive relation (\ref{eq:bdef}). This immediately gives the identity
  \[
    P(X) = \int_{\mathbb R} \frac{1}{\sigma} \calk \left( \frac{\xi-X}{\sigma} \right) \mathcal D_\sigma P(\xi) d\xi
  \]
  for any polynomial $P(X)$.

With this identity, for the EQMOM ansatz
  \[
    f(\xi)=\sum_{i=1}^n \frac{w_i}{\sigma} \calk \left( \frac{\xi-\xi_i}{\sigma} \right)
  \]
  we deduce that
  \[
    \int_{\mathbb R} f(\xi) G(\xi;\mathbf M) d\xi = \sum_{i=1}^n \frac{w_i}{\sigma} \int_{\mathbb R} \calk \left( \frac{\xi-\xi_i}{\sigma} \right) \mathcal D_\sigma g(\xi) d\xi
    =  \sum_{i=1}^n w_i g(\xi_i) = 0
  \]
and thereby
\[
M_{2n+1} = \int_\mathbb R \xi^{2n+1} f d\xi=\int_\mathbb R (\xi^{2n+1}-G(\xi;M)) f d\xi=\langle X^{2n+1}-G \rangle_\mathbf M.
\]
This is of the form (\ref{eq:clos}) and hence the proof is complete.
\end{proof}

\bibliographystyle{plain}
\bibliography{references}

\end{document}